\newtheorem{theo}{Theorem}[section]
\newtheorem{coro}{Corollary}[section]
\newtheorem{lemm}{Lemma}[section]
\newtheorem{fact}{Fact}[section]
\newtheorem*{main}{Main Theorem}
\theoremstyle{definition}
\newtheorem{defi}{Definition}[section]
\theoremstyle{remark}
\newtheorem{rema}{Remark}[section]
\begin{document}

\title{\large{{\bf Decompositions of the space of Riemannian metrics \\ on a compact manifold with boundary}}}

\author{Shota Hamanaka\thanks{supported in doctoral program in Chuo University, 2020.}}

\date{}

\maketitle

\begin{abstract}
In this paper, for a compact manifold $M$ with non-empty boundary $\partial M$,
we give a Koiso-type decomposition theorem,
as well as an Ebin-type slice theorem,
for the space of all Riemannian metrics on $M$
endowed with a fixed conformal class on $\partial M$.
As a corollary, we give a characterization of relative Einstein metrics.
\end{abstract}

\section{Introduction}

~~The study of the differential structure of the space $\mathscr{M}$ of all Riemannian metrics on a closed manifold is one of important studies in geometry.
In~\cite{ebin1970manifold}, Ebin particularly has proved a slice theorem for the pullback action of the diffeomorphism group on $\mathscr{M}$.
In \cite{koiso1978nondeformability}, Koiso has also extended it to an Inverse Limit Hilbert (ILH for brevity)-version.
Moreover, he has also studied the conformal action on $\mathscr{M}$, and consequently has proved the following decomposition theorem:

\begin{theo}[Koiso's decomposition theorem { \cite[Corollary 2.9]{koiso1979decomposition} }]
\label{theo1.1}
Let $M^{n}$ be a closed $n$-manifold $(n \ge 3)$, $\mathscr{M}$ the space of all Riemannian metrics on $M$ and $\mathrm{Diff}(M)$ the diffeomorphism group of $M$.
Set also 
$$C^{\infty}_{+}(M) := \bigl\{ f \in C^{\infty}(M) \bigm| f > 0~\mathrm{on}~M \bigr\},$$
$$\check{\mathfrak{S}} := \Biggl\{ g \in \mathscr{M} \Biggm| \mathrm{Vol}(M,g) = 1,~R_{g} = \mathrm{const},~\frac{R_{g}}{n-1} \notin \mathrm{Spec} (-\Delta_{g}) \Biggr\},$$
where $\mathrm{Vol}(M,g)$, $R_{g}$ and $\mathrm{Spec}(-\Delta_{g})$ 
denote respectively the volume of $(M,g)$, the scalar curvature of $g$ and the set of all non-zero eigenvalues of the (non-negative) Laplacian $-\Delta_{g}$ of $g$.
Note that these four spaces become naturally ILH-manifolds.
For any $g=f\bar{g}~(f \in C^{\infty}_{+},~\bar{g} \in \check{\mathfrak{S}})$ 
and any smooth deformation $\{ g(t) \}_{t \in (-\epsilon,\epsilon)}$ of $g$ for sufficiently small $\epsilon > 0$,
then there exist uniquely smooth deformations $\{ f(t) \}_{t \in (-\epsilon,\epsilon)} ( \subset C^{\infty}_{+}(M) )$ of $f$, $\{ \phi(t) \}_{t \in (-\epsilon,\epsilon)} ( \subset \mathrm{Diff}(M)$ ) of the identity $id_{M}$
and $\{ g(t) \}_{t \in (-\epsilon,\epsilon)} ( \subset \check{\mathfrak{S}}$ ) of $\bar{g}$ with $\delta_{g} (\bar{g}^{'}(0)) = 0$ such that
\[
g(t) = f(t) \phi(t)^{*} \bar{g}(t).
\]
Here, $\delta_{g} (\bar{g}^{'}(0))$ denotes the divergence $-~\nabla^{i} (\bar{g}^{'}(0))_{i}$ with respect to $g$.
\end{theo}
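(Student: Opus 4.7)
My plan is first to reduce the theorem to an infinitesimal linear decomposition at $t=0$, and then to integrate the construction using an ILH-type inverse function theorem. Differentiating the desired identity $g(t) = f(t)\phi(t)^{*}\bar{g}(t)$ at $t=0$, with $h := g'(0) \in S^{2}(M)$, $v := f'(0)/f$, $X := \phi'(0)$, and $k := \bar{g}'(0)$, yields
\[
h \;=\; f v\,\bar{g} \;+\; f\,L_{X}\bar{g} \;+\; f\,k,
\]
so it suffices to show that any $\tilde{h} := h/f \in S^{2}(M)$ admits a unique decomposition
\[
\tilde{h} \;=\; v\bar{g} \;+\; L_{X}\bar{g} \;+\; k, \qquad k \in T_{\bar{g}}\check{\mathfrak{S}},
\]
where $T_{\bar{g}}\check{\mathfrak{S}}$ is cut out by the three linearized constraints: (a) $\int_{M}\mathrm{tr}_{\bar{g}} k \, dV_{\bar{g}} = 0$ (unit volume), (b) $(dR)_{\bar{g}}(k) = \mathrm{const}$ (constant scalar curvature), and (c) $\delta_{g} k = 0$ (the slice condition).

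The heart of the argument, and its main obstacle, is to establish the direct-sum splitting
\[
S^{2}(M) \;=\; C^{\infty}(M)\cdot\bar{g} \;\oplus\; \{L_{X}\bar{g} : X \in \mathfrak{X}(M)\} \;\oplus\; T_{\bar{g}}\check{\mathfrak{S}}.
\]
I would proceed in two sub-steps. First, using Ebin-style gauge fixing with respect to $g$, I would solve the elliptic system for $X$ (modulo the finite-dimensional space of $\bar{g}$-Killing fields, absorbed in a canonical way) so that $\tilde{h} - v\bar{g} - L_{X}\bar{g}$ lies in $\ker \delta_{g}$. Second, I would determine $v$ so that the residue additionally satisfies (a) and (b). Condition (b) reduces, via the standard formula for the linearization of $R$ along a conformal direction, to an elliptic equation for $v$ of the schematic form
\[
(n-1)\Delta_{\bar{g}} v + R_{\bar{g}}\, v \;=\; (\text{prescribed data from } \tilde{h} \text{ and } X),
\]
with (a) prescribing the Lagrange multiplier for the constant in (b). Solvability and injectivity on the orthogonal complement of constants is precisely ensured by the hypothesis $R_{\bar{g}}/(n-1)\notin \mathrm{Spec}(-\Delta_{\bar{g}})$, which makes the self-adjoint operator $(n-1)\Delta_{\bar{g}} + R_{\bar{g}}$ a Fredholm isomorphism there; this is the essential use of the eigenvalue hypothesis, and the only genuinely non-formal step.

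Granted the infinitesimal splitting, I would consider the assembly map
\[
\Psi : C^{\infty}_{+}(M) \times \mathrm{Diff}(M) \times \check{\mathfrak{S}} \longrightarrow \mathscr{M}, \qquad (f,\phi,\bar{g}) \longmapsto f\,\phi^{*}\bar{g},
\]
observe that the decomposition above exhibits a canonical right inverse to $d\Psi$ at $(f,\mathrm{id}_{M},\bar{g})$, and invoke the ILH inverse function theorem (as developed by Koiso in the closed-manifold setting) to lift the infinitesimal splitting of $h = g'(0)$ to smooth one-parameter families $f(t)$, $\phi(t)$, $\bar{g}(t)$ realizing $g(t) = f(t)\phi(t)^{*}\bar{g}(t)$ with $\delta_{g}(\bar{g}'(0)) = 0$. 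Smooth $t$-dependence follows from smooth dependence in the ILH IFT, and uniqueness of the curves is inherited from uniqueness of the infinitesimal decomposition.
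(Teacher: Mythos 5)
Your proposal correctly isolates the crux --- the operator $(n-1)\Delta_{\bar g}+R_{\bar g}$ must be invertible on the relevant complement of constants, which is exactly what the hypothesis $R_{\bar g}/(n-1)\notin\mathrm{Spec}(-\Delta_{\bar g})$ delivers --- but the way you assemble the global statement has a genuine gap. The single map $\Psi(f,\phi,\bar g)=f\,\phi^{*}\bar g$ is \emph{never} a local diffeomorphism near $(f,\mathrm{id}_{M},\bar g)$ when $\bar g$ has isometries: $d\Psi$ has the Killing fields in its kernel, so ``a canonical right inverse to $d\Psi$ plus the inverse function theorem'' cannot produce the decomposition. This is precisely why Koiso (and this paper, for the boundary version) does not invert a three-factor product map but argues sequentially: first show that $(f,\bar g)\mapsto f\bar g$ is a local ILH-diffeomorphism from $C^{\infty}_{+}(M)\times\check{\mathfrak{S}}$ onto a neighborhood in $\mathscr{M}$ (this is where the eigenvalue condition is used, via the isomorphism of the linearization you describe), and only then apply Ebin's slice theorem to the resulting curve in $\check{\mathfrak{S}}$, writing it as $\phi(t)^{*}\bar g(t)$ with $\bar g(t)$ in a slice transverse to the $\mathrm{Diff}(M)$-orbit; the slice theorem itself requires the quotient $\mathrm{Diff}/I_{\bar g}$ to be given a manifold structure via a Frobenius-type argument and cannot be folded into one IFT application on the product. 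One then observes that $R_{\phi(t)^{*}\bar g(t)}=R_{\bar g(t)}$ to see the slice factor stays in $\check{\mathfrak{S}}$.

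Two further points would sink the argument as written even modulo the isotropy issue. First, your two sub-steps are circular: the gauge condition $\delta_{g}(\tilde h-v\bar g-L_{X}\bar g)=0$ involves $dv$ (since $\delta_{g}(v\bar g)=-dv$), so you cannot solve for $X$ ``first'' and then for $v$; the system is coupled, and the clean way to decouple it is exactly the reversed order above. Relatedly, the directness of your three-way sum fails at conformal Killing fields ($L_{X}\bar g=\tfrac{2}{n}(\mathrm{div}X)\bar g$ lies in both the first and second summands); this is salvageable via Obata's theorem together with the eigenvalue hypothesis, but it is not addressed. Second, there is no general ``ILH inverse function theorem'': each inversion loses derivatives, so the actual proof works at each finite Sobolev level $H^{s}$ and recovers smoothness of $f(t),\phi(t),\bar g(t)$ from a separate regularity lemma exploiting $\mathrm{Diff}$-equivariance (Koiso's Lemma~2.8; Lemma~\ref{lemm3.8} here). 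Invoking an ILH IFT as a black box hides the step that makes the $C^{\infty}$ statement true.
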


Note that the above decomposition can be replaced by 
\[
g(t) = (f(t) \circ \phi(t)) \phi(t)^{*}\bar{g}(t)~\mathrm{with}~\delta_{g} \big( f^{'}(0) \bar{g}(0) + f(0)\bar{g}^{'}(0) \big) = 0.
\]

This theorem has often played an important role in studying gemoetric structures related to several variational problems on a closed manifold.
Hence, extending this to on a manifold with boundary seems to be also important.

From now on, we throughout assume that $M$ is a compact connected oriented smooth $n$-manifold $(n \ge 3)$ with smooth boundary $\partial M$.
Let $\mathscr{M}$ be the space of all Riemannian metrics on $M.$
In order to obtain a corresponding Koiso-type decomposition theorem on $M$ with $\partial M$
to Theorem \ref{theo1.1} on a closed manifold, we need to fix a suitable boundary condition
for each metric $g$ on $M$.
From the variational view point of the Einstein-Hilbert functional,
a candidate of such boundary conditions may be the following:
For a fixed metric $g_{0}$ on $M$ with zero mean curvature $H_{g_{0}} = 0$
along $\partial M$, we fix the boundary condition for each $g$ on $M$ as 
$[g|_{\partial M}] = [g_{0} |_{\partial M}]$ and $H_{g} = 0$ along $\partial M$
(see Fact \ref{fact2.1} and \cite{akutagawa2},\cite{akutagawarelative}).
Here, $[g|_{\partial M}]$ denotes the conformal class of $g|_{\partial M}$.
However, one can notice that this boundary condition is not enough to get a 
Koiso-type decomposition theorem, even more an Ebin-type slice theorem.
Here, we will fix a slightly stronger boundary condition below,
which still has a naturality(see Fact \ref{fact2.1}(1)).

Fix a Riemannian metric $g_{0}$ on $M$ with $H_{g_{0}} = 0$ along $\partial M$
and set its conformal class $C := [g_{0}]$ on $M$.
$\nu_{g_{0}}$ denotes the outer unit normal vector field along $\partial M$ with respect to $g_{0}$.
When two metrics $g$ and $\tilde{g}$ on $M$ have the same 1-jets 
$j^{1}_{x} g = j^{1}_{x} \tilde{g}$ for all $x \in \partial M,$
we write it as $j^{1}_{\partial M} g = j^{1}_{\partial M} \tilde{g}.$
Set also 
\[
C^{\infty}_{+}(M)_{N} := \bigl\{ f \in C^{\infty}_{+}(M) \bigm| \nu_{g_{0}} (f)|_{\partial M} =0 \bigr\},
\]
\[
\mathscr{M}_{C_{0}} := \bigl\{ g \in \mathscr{M} \bigm| g = f g_{0}~\mathrm{on}~\partial M~\mathrm{for~some}~f \in C^{\infty}_{+}(M),~H_{g} = 0~\mathrm{on}~\partial M \bigr\},
\]
\[
\mathscr{M}_{C^{1}_{0}} := \bigl\{ g \in \mathscr{M} \bigm| j^{1}_{\partial M} g = j^{1}_{\partial M} (fg_{0})~\mathrm{for~some}~f \in C^{\infty}_{+}(M)_{N} \bigr\},
\]
\[
\mathfrak{S}_{C^{(1)}_{0}} := \bigl\{ g \in \mathscr{M}_{C^{(1)}_{0}} \bigm| \mathrm{Vol}(M,g) = 1,~R_{g} = \mathrm{const} \bigr\},
\]
\[
\check{\mathfrak{S}}_{C^{(1)}_{0}} := \Biggl\{ g \in \mathfrak{S}_{C^{(1)}_{0}} \Biggm| \frac{R_{g}}{n-1} \notin \mathrm{Spec} (-\Delta_{g} ; \mathrm{Neumann}) 
\Biggr\},
\]
\[
\mathrm{Diff}_{C_{0}} := \bigl\{ \phi \in \mathrm{Diff}(M) \bigm| \phi^{*} g_{0} = f g_{0}~\mathrm{on}~\partial M~\mathrm{for~some}~f \in C^{\infty}_{+}(M) \bigr\},
\]
where $\mathrm{Spec}(-\Delta_{g} ; \mathrm{Neumann})$ denote the set of all non-zero eigenvalues of $-\Delta_{g}$ with the Neumann boundary condition respectively.
Note that $H_{g} = 0$ along $\partial M$ for all $g \in \mathscr{M}_{C^{1}_{0}}$.
Our main result is the following theorem:

\begin{main}
For any $g = f\bar{g}~( f \in C^{\infty}_{+}(M)_{N},~\bar{g} \in \check{\mathfrak{S}}_{C^{1}_{0}}$ ) and any smooth deformation
$\{ g(t) \}_{t \in (-\epsilon,\epsilon)} ( \subset \mathscr{M}_{C^{1}_{0}} )$ of $g$ for sufficiently small $\epsilon > 0$ ,
there exist smooth deformations $\{ f(t) \}_{t \in (-\epsilon,\epsilon)} ( \subset C^{\infty}_{+}(M)_{N} )$ of $f,$
$\{ \phi(t) \}_{t \in (-\epsilon,\epsilon)} ( \subset \mathrm{Diff}_{C_{0}} )$ of $id_{M}$
and $\{ \bar{g}(t) \}_{t \in (-\epsilon,\epsilon)} ( \subset \check{\mathfrak{S}}_{C^{1}_{0}} )$ of $\bar{g}$
with $\delta_{g} (\bar{g}^{'}(0)) = 0$
such that 
\[
g(t) = f(t) \phi(t)^{*} \bar{g}(t).
\]
\end{main}

The rest of this paper is organized as follows.
In Section 2, we state a Slice theorem for a manifold with boundary with a fixed conformal class on the boundary and prove it.
In Section 3, we prepare some necessary lemmas for the proof of Main Theorem.
Finally, combining them with Slice theorem, we prove Main Theorem.
In Section 4, we give two applications of Main Theorem.

\subsection*{Acknowledgement}
~~I would like to thank my supervisor Kazuo Akutagawa for suggesting the initial direction for my study,
his good advice and support.

\section{Preliminaries and a slice theorem}

~~Let $M$ be a compact connected oriented smooth $n$-dimensional manifold with non-empty smooth boundary $\partial M$.
Fix a Riemannian metric $g_{0}$ with $H_{g_{0}} = 0$. Here, $H_{g_{0}}$ denotes the mean curvature of $\partial M$ with respect to $g_{0}$. And set $C := [g_{0}]$ its conformal class on $M$.
For a given positive definite symmetric (0,2)-type tensor field $T$ on $M$, we will write $T||_{\partial M} \in C||_{\partial M}$ when $T = f \cdot g_{0}~\mathrm{for~some}~f \in C^{\infty}_{+}(M)~\mathrm{on}~\partial M$.
Note that this condition equivalent to $\iota^{*} T = \iota^{*} (f g_{0})$ for some $f \in C^{\infty}_{+}(M)$, where $\iota~:\partial M \rightarrow M$ is the natural inclusion.
Moreover, we denote $T||^{1}_{\partial M} \in C_{0}||^{1}_{\partial M}$ when $j^{1}_{\partial M}T = j^{1}_{\partial M} (f \cdot g_{0})~\mathrm{for~some}~f \in C^{\infty}_{+}(M)_{N}$.
With this understood, we set    
$\mathscr{M}_{C} := \bigl\{ g \in \mathscr{M} \bigm| g||_{\partial M} \in C||_{\partial M} \bigr\}$. 
Note also that $\mathrm{Diff}_{C_{0}} := \bigl\{ \phi \in \mathrm{Diff}(M) \bigm| (\phi^{*} g_{0})||_{\partial M} \in C||_{\partial M} \bigr\}.$ and $\mathscr{M}_{C^{1}_{0}} = \bigl\{ g \in \mathscr{M} \bigm| g||^{1}_{\partial M} \in C||^{1}_{\partial M} \bigr\}
\subsetneq \mathscr{M}_{C_{0}} = \bigl\{ g \in \mathscr{M} \bigm| g||_{\partial M} \in C||_{\partial M},~H_{g} = 0~\mathrm{on}~\partial M \bigr\}.$

\begin{rema}
In the case that $\partial M= \emptyset$ (that is, $M$ is a closed manifold),
it is well known that a Riemannian metric on $M$ is Einstein if and only if it is a critical point of the normalized 
Einstein-Hilbert functional $\mathcal{E}$ on the space $\mathscr{M}:$
\[
\mathcal{E}~:~\mathscr{M} \rightarrow \mathbb{R},~~g \mapsto \mathcal{E}(g) := \frac{\bigintss_{M} R_{g}dv_{g}}{\mathrm{Vol}_{g}(M)^{\frac{n-2}{n}}},
\]
where $R_{g},~dv_{g},~\mathrm{Vol}_{g}(M)$ denote respectively the scalar curvature, the volume measure of $g$ and the volume of $(M,g)$.
However, if we consider the analogue of the case of $\mathcal{E}$ on compact $n$-manifold $M$ with non-empty boundary,
then the set of critical points of $\mathcal{E}$ on the space $\mathscr{M}$ is empty (see Fact \ref{fact2.1} below). 
Hence, in this case, we need to fix a suitable boundary condition for all metrics, and then $\mathcal{E}$ must be restricted to a subspace of $\mathscr{M}$.
\vspace{1ex}

When $\partial M \neq \emptyset,$ set the several subspaces of $\mathscr{M}$ below:
\[
\mathscr{M}_{C|_{\partial}} := \bigl\{ g \in \mathscr{M} \bigm| [g|_{\partial M}] = C|_{\partial M} \bigr\},
\]
\[
\mathscr{M}_{C_{\mathrm{const}}|_{\partial}} := \bigl\{ g \in \mathscr{M}_{C|_{\partial}} \bigm| \exists c \in \mathbb{R}~\mathrm{s.t.}~H_{g} = c~\mathrm{on}~\partial M \bigr\},
\]
\[
\mathscr{M}_{0} := \bigl\{ g \in \mathscr{M} \bigm| H_{g} = 0~\mathrm{on}~\partial M \bigr\},
\]
\[
\mathscr{M}_{C_{0}|_{\partial}} := \mathscr{M}_{C|_{\partial}} \cap \mathscr{M}_{0} = \bigl\{ g \in \mathscr{M}_{C|_{\partial}} \bigm| H_{g} = 0~\mathrm{on}~\partial M \bigr\}.
\]

A metric $g \in \mathscr{M}$ is called a \textit{relative} metric if $g \in \mathscr{M}_{0}.$
By Fact \ref{fact2.1} below, it is reasonable to restrict the functional $\mathcal{E}$ to the subspace
$\mathscr{M}_{C_{0}}$ as well as  $\mathscr{M}_{C_{0}|_{\partial}}$ and $\mathscr{M}_{0}.$ 

\begin{fact}[{\cite[Proposition~2.1]{akutagawa2}},~{\cite[Remark~1,~Theorem~1.1]{akutagawarelative}}]
\label{fact2.1}
Let $M,~\mathcal{E},$ and $\mathscr{M}$ be the same as the above. 
Then the following holds:

(1) $g \in \mathrm{Crit}(\mathcal{E}|_{\mathscr{M}_{C^{(1)}_{0}}})$ if and only if $g$ is an Einstein metric with $H_{g} = 0$ 
(namely, a relative Einstein metric) and $g||^{(1)}_{\partial M} \in C||^{(1)}_{\partial M}.$

(2) $g \in \mathrm{Crit}(\mathcal{E}|_{\mathscr{M}_{C_{0}|_{\partial}}})$ if and only if $g$ is a relative Einstein metric and $[g|_{\partial M}] = C|_{\partial M}$.

(3) $g \in \mathrm{Crit}(\mathcal{E}|_{\mathscr{M}_{0}})$ if and only if $g$ is an Einstein metric with totally geodesic boundary.

(4) $\mathrm{Crit}(\mathcal{E}) = \emptyset,~~\mathrm{Crit}(\mathcal{E}|_{\mathscr{M}_{C}}) = \emptyset,~~\mathrm{Crit}(\mathcal{E}_{\mathscr{M}_{C|_{\partial}}}) = \emptyset,~~\mathrm{Crit}(\mathcal{E}_{\mathscr{M}_{C_{\mathrm{const}}|_{\partial}}}) = \emptyset.$
\noindent
Here, for instance, $\mathrm{Crit}(\mathcal{E})$ and $\mathrm{Crit}(\mathcal{E}|_{\mathscr{M}_{C}})$ denote respectively the set of all critical metrics of $\mathcal{E}$
and the set of those of its restriction to $\mathscr{M}_{C}$.
\end{fact}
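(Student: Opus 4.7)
The plan is to derive the first variation of the Einstein-Hilbert functional on a manifold with boundary, identify the tangent space to each of the constraint subspaces at a metric $g$, and read off the critical-point equations by requiring the first variation to vanish on that tangent space. For the emptiness claims in (4), the strategy will be to exhibit explicit admissible variations whose first variation cannot vanish for any $g$.

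For a smooth one-parameter family $g(t)$ with $h := \dot{g}(0)$, the classical computation (extending the closed-manifold formula by carefully keeping all boundary contributions) gives
\[
\frac{d}{dt}\bigg|_{t=0}\int_M R_{g(t)}\, dv_{g(t)} = \int_M \bigl\langle \tfrac{1}{2}R_g\, g - \mathrm{Ric}_g,\, h\bigr\rangle\, dv_g + \int_{\partial M} \mathcal{B}_g(h)\, d\sigma_g,
\]
where $\mathcal{B}_g(h)$ is a first-order linear expression in $h$ involving $H_g$, the second fundamental form $A_g$, the tangential trace $\mathrm{tr}_{\partial M}(h^T)$, the normal-normal component $h(\nu_g,\nu_g)$, and the normal derivative $\nabla_{\nu_g}(\mathrm{tr}_g h)$. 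Combining this with the derivative of $\mathrm{Vol}_g(M)^{(n-2)/n}$ will yield the first variation of $\mathcal{E}$ as a bulk term proportional to the traceless part of $\mathrm{Ric}_g - \tfrac{R_g}{n}g$ paired with $h$, plus the same boundary integrand.

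Next I would identify the tangent spaces: $T_g\mathscr{M}_0$ consists of $h$ with $(DH)_g(h) = 0$ on $\partial M$; $T_g\mathscr{M}_{C|_\partial}$ consists of $h$ whose tangential part $h^T$ is pointwise a multiple of $g|_{T\partial M}$; $T_g\mathscr{M}_{C_0|_\partial} = T_g\mathscr{M}_{C|_\partial}\cap T_g\mathscr{M}_0$; and $T_g\mathscr{M}_{C^1_0}$ adds to the boundary conformal condition on the $0$-jet a linearized $1$-jet constraint that forces $\nabla_{\nu_{g_0}}h$ along $\partial M$ to be of the form $\dot{f}\cdot g_0$ with $\nu_{g_0}(\dot{f})|_{\partial M}=0$. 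Since the interior components of $h$ are completely free in every case, vanishing of the bulk integrand will already force $\mathrm{Ric}_g = \tfrac{R_g}{n}g$, i.e.\ $g$ is Einstein. Case (3) then has the maximal boundary freedom subject only to $(DH)_g(h)=0$, and tracking the coefficients of the remaining independent boundary data will force $A_g=0$ and $H_g=0$ (totally geodesic). Case (2) relaxes the mean-curvature constraint on variations but restricts $h^T$ to pure trace, and the analogous coefficient-matching will yield $H_g=0$ without forcing $A_g=0$, giving the relative Einstein characterization. Case (1) uses the additional $1$-jet matching to kill the boundary term proportional to $\nabla_{\nu_g}(\mathrm{tr}_g h)$, yielding relative Einstein together with $g||^{(1)}_{\partial M}\in C||^{(1)}_{\partial M}$.

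For (4), on the larger spaces $\mathscr{M}$, $\mathscr{M}_C$, $\mathscr{M}_{C|_\partial}$, and $\mathscr{M}_{C_{\mathrm{const}}|_\partial}$, I would exhibit admissible variations with arbitrary prescribed normal derivative $\nabla_{\nu_g}(\mathrm{tr}_g h)$ on $\partial M$ and show that the coefficient of this normal-derivative term in $\mathcal{B}_g(h)$ is a nonvanishing scalar on $\partial M$, which makes the boundary integral generically nonzero regardless of $g$; the constraints defining these four subspaces control only zeroth-order tangential data (and at most the value of $H_g$), so they cannot suppress this contribution. The main obstacle I anticipate is the precise accounting of boundary terms: one must decompose $h|_{\partial M}$ into its tangential-tangential, tangential-normal, and normal-normal components together with $\nabla_{\nu_g}(\mathrm{tr}_g h)$, then match each coefficient in $\mathcal{B}_g(h)$ against the constraints allowed by each subspace. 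In particular, verifying in (1) that the $1$-jet condition is sharp enough to simultaneously eliminate the problematic boundary terms while leaving exactly the relative-Einstein equation is the delicate step.
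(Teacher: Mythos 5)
The paper does not actually prove Fact \ref{fact2.1}: it is imported verbatim, with citations, from \cite[Proposition~2.1]{akutagawa2} and \cite[Remark~1, Theorem~1.1]{akutagawarelative}, so there is no internal proof to compare against. Your plan reconstructs what those references do: compute the first variation of $\mathcal{E}$ keeping all boundary terms, identify the tangent space to each constraint set, and match coefficients. In outline this is sound, and your anticipated ``delicate step'' is exactly where the real content lies. The tool that makes the bookkeeping tractable --- and which the paper itself quotes later, in the proof of Lemma \ref{lemm3.7} --- is the linearized mean curvature identity
\[
2\,D_{g}H(h) = \bigl[\, d(\mathrm{tr}_{g}h) - \delta_{g}h \,\bigr](\nu_{g}) \;-\; \delta_{g|_{\partial M}}\bigl(h(\cdot,\nu_{g})\bigr) \;-\; \langle \Pi_{g}, h\rangle .
\]
Substituting this into the raw boundary integrand (which consists of $\nu_{g}(\mathrm{tr}_{g}h)$ and $(\delta_{g}h)(\nu_{g})$ terms coming from integrating $-\Delta_{g}\mathrm{tr}_{g}h + \delta_{g}\delta_{g}h$ by parts) rewrites it, modulo an exact tangential divergence that integrates to zero on the closed hypersurface $\partial M$, as a multiple of $2\,D_{g}H(h) + \langle \Pi_{g}, h^{T}\rangle$. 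Once in this form, all four cases fall out cleanly: on $\mathscr{M}_{0}$ one has $D_{g}H(h)=0$ and $h^{T}$ arbitrary, forcing $\Pi_{g}=0$ (case (3)); on $\mathscr{M}_{C_{0}|_{\partial}}$ and $\mathscr{M}_{C_{0}}$ one has $h^{T}=\rho\, g|_{\partial M}$, so $\langle \Pi_{g},h^{T}\rangle = \rho H_{g} = 0$ automatically because $H_{g}=0$ is built into the space (cases (2) and (1)); and on the four large spaces of (4), $D_{g}H(h)$ is unconstrained (or constrained only to be constant on $\mathscr{M}_{C_{\mathrm{const}}|_{\partial}}$, in which case a nonzero constant still works), so the boundary integral can always be made nonzero even at an Einstein metric.

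Two corrections to your plan. First, your emptiness argument for (4) is phrased around the coefficient of $\nabla_{\nu_{g}}(\mathrm{tr}_{g}h)$ in the raw integrand; this is error-prone because the boundary data $\bigl(h^{T}, h(\nu,\cdot), h(\nu,\nu), \nabla_{\nu}(\mathrm{tr}_{g}h)\bigr)$ are coupled by the constraints (e.g.\ $D_{g}H(h)=0$ ties $\nu(\mathrm{tr}_{g}h)$ to the other pieces), so the invariantly correct uncontrollable quantity is $D_{g}H(h)$, not $\nu(\mathrm{tr}_{g}h)$ per se; a clean witness is $h$ with vanishing $0$-jet on $\partial M$ and prescribed $\mathrm{tr}_{g}(\nabla_{\nu}h)\neq 0$, which lies in the tangent space of all four spaces. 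Second, in case (1) you attribute the vanishing of the problematic boundary term to the $1$-jet condition, but the statement covers both $\mathscr{M}_{C_{0}}$ and $\mathscr{M}_{C^{1}_{0}}$ (the superscript $(1)$ convention), and for $\mathscr{M}_{C_{0}}$ the mechanism is instead $D_{g}H(h)=0$ together with $H_{g}=0$, as in case (2); only the $\mathscr{M}_{C^{1}_{0}}$ half uses the jet condition (which makes $D_{g}H(h)=0$ automatic). Also note that the bulk term only gives $\mathrm{Ric}_{g}=\tfrac{R_{g}}{n}g$ pointwise; you need $n\ge 3$ and Schur's lemma (or the volume normalization) to conclude $g$ is genuinely Einstein. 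With these adjustments your outline is a correct reconstruction of the cited proof.
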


\end{rema}

 From now on, we will consentrate on the spaces $\mathscr{M}_{C}$ and $\mathscr{M}_{C_{0}}.$
For a smooth fibre bundle $F$ , we denote by  $H^{s}(F)$ the space of all $W^{s,2}$ -sections.
(Note that $L^{2}$ -norm does not depend on the choice of Riemannian metric, hence, we fix a reference metric to define these function spaces.)
The Sobolev embedding theorem states that $H^{s}(F) \hookrightarrow C^{k}(F)$ is continuous if $s > n / 2 + k$, see for instance \cite{aubin2013some}.
By the Sobolev embedding, if $s > n / 2$, $H^{s}(M \times M)$ (the set of all $H^{s}$-maps from $M$ to itself) is a Hilbert manifold.
Pick $s > 4 + \frac{n}{2}$ and let $C^{s} \mathrm{Diff} := \bigl\{ \eta \in C^{s}(M \times M) \bigm| \eta^{-1} \in C^{s}(M \times M) \bigr\}$
and let $\mathrm{Diff}^{s} := H^{s}(M \times M) \cap C^{1} \mathrm{Diff}$. 
From the Sobolev embedding, $\mathrm{Diff}^{s}$ is open in $H^{s}(M \times M)$ and hence it is also a Hilbert manifold.
Let $\mathrm{Diff}^{s}_{C_{0}} := \bigl\{ \eta \in \mathrm{Diff}^{s} \bigm| (\eta^{*} g_{0})||_{\partial M} \in C||_{\partial M} \bigr\} 
= \bigl\{ \eta \in \mathrm{Diff}^{s} \bigm| \eta^{*} g_{0} = f g_{0}~\mathrm{on}~\partial M~\mathrm{for~some}~f \in H^{s-3/2}(C^{\infty}_{+}(M)) \bigr\}$.
Then $\mathrm{Diff}^{s}_{C_{0}}$ is a Hilbert submanifold of $\mathrm{Diff}^{s - 2}$.
We denote by $id_{M} \in \mathrm{Diff}^{s}_{C_{0}}  ( \subset \mathrm{Diff}^{s} )$ the identity map.

We set $\mathscr{M}^{s} := H^{s}(S^{2}T^{*}M) \cap C^{0}\mathscr{M}$, where $S^{2}T^{*}M$ and $C^{0}\mathscr{M}$ the tensor field consisting of all symmetric (0,2)-tensors on $M$
and the set of all $C^{0}$ metrics respectively. 
Then $\mathscr{M}^{s}$ is a Hilbert manifold modeled on $H^{2}(S^{2}T^{*}M)$ (by the Sobolev embedding).
And we define a closed Hilbert submanifold of $\mathscr{M}^{s - 1}$ as $\mathscr{M}^{s}_{C} := \bigl\{ g \in \mathscr{M}^{s} \bigm| g||_{\partial M} \in C||_{\partial M} \bigr\}
= \bigl\{ g \in \mathscr{M}^{s} \bigm| g = f g_{0}~\mathrm{for~some}~f \in H^{s -1/2}(C^{\infty}_{+}(M))~\mathrm{on}~\partial M \bigr\}$.
Additionally, we set $\mathscr{M}^{s}_{C_{0}} := \bigl\{ g \in \mathscr{M}^{s} \bigm| g = f g_{0}~\mathrm{on}~\partial M~\mathrm{for~some}~f \in H^{s -1/2}(C^{\infty}_{+}(M))~
\mathrm{and}~H_{g} = 0~\mathrm{on}~\partial M \bigr\}$
and
$\mathscr{M}^{s}_{C^{1}_{0}} := \bigl\{ g \in \mathscr{M}^{s} \bigm| g ||^{1}_{\partial M} \in C ||^{1}_{\partial M} \bigr\}
= \bigl\{ g \in \mathscr{M}^{s} \bigm| j^{1}_{\partial M}g = j^{1}_{\partial M} (f g_{0})~\mathrm{for~some}~f \in H^{s -1/2}(C^{\infty}_{+}(M)_{N}) \bigr\}$.
Then $\mathscr{M}^{s}_{C_{0}}$ and $\mathscr{M}_{C^{1}_{0}}$ are Hilbert submanifolds of $\mathscr{M}^{s-2}$. See \cite{ebin1970manifold}, \cite{palais1968foundations} for more detail about these spaces. 
Note that, for $\eta \in \mathrm{Diff}^{s}_{C_{0}}$ and $g \in \mathscr{M}_{C^{(1)}_{0}},~\eta^{*} g \in \mathscr{M}_{C^{(1)}_{0}}.$
Moreover, for $g \in \mathscr{M}$, we denote $I_{g}$ be the isotropy subgroup of $g$ in $\mathrm{Diff}^{s}_{C_{0}}$
, that is, 
\[
I_{g} := \bigl\{ \eta \in \mathrm{Diff}^{s}_{C_{0}} \bigm| \eta^{*}g = g \bigr\}.
\]

In this section, we shall prove the following theorem:

\begin{theo}[Slice theorem for manifold with boundary]
\label{theo2.1}

Let $s > \frac{n}{2} + 4$ and
\[
A~:~\mathrm{Diff}^{s+1}_{C_{0}} \times \mathscr{M}^{s}_{C} \longrightarrow \mathscr{M}^{s}_{C}
\]
be an usual action by pullback.
Then for each $\gamma \in \mathscr{M}_{C}$ there exsits a submanifold $\mathcal{S} \subset \mathscr{M}^{s}_{C}$
containing $\gamma$ ,which is diffeomorphic to a ball in a separable real Hilbert space, such that

(1) $\eta \in I_{\gamma} \Rightarrow A(\eta,\mathcal{S}) = \mathcal{S},$

(2) $\eta \in \mathrm{Diff}^{s+1}_{C_{0}}~,~A(\eta,\mathcal{S}) \cap \mathcal{S} \neq \emptyset \Rightarrow \eta \in I_{\gamma}$ and

(3)There exists a local section:
\[
\chi~:~\bigl( \mathrm{Diff}^{s+1}_{C_{0}} / I_{\gamma} \supset \bigr)U \longrightarrow \mathrm{Diff}^{s+1}_{C_{0}}
\]
defined in a neighborhood $U$ of the identity coset such that if  
\[
F~:~U \times \mathcal{S} \longrightarrow \mathscr{M}^{s}_{C}~;~(u,t) \mapsto A \bigl( \chi(u),t \bigr),
\]
then $F$ is a homeomorphism onto a neighborhood of $\gamma.$
Moreover, the same statement holds when we replace $\mathscr{M}^{s}_{C}$ by $\mathscr{M}^{s}_{C^{(1)}_{0}}$.
\end{theo}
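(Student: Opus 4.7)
The plan is to follow Ebin's slice theorem for the diffeomorphism action on the space of metrics on a closed manifold and adapt the elliptic splitting step to the present boundary setting. The essential input is an $L^{2}$-orthogonal splitting of $T_{\gamma}\mathscr{M}^{s}_{C}$ into the image of the infinitesimal orbit map and a finite-codimensional complement, which will serve as the tangent model for the slice.

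First I would identify the linearization at the identity. The orbit map $A_{\gamma}(\eta):=\eta^{*}\gamma$ has derivative $\alpha_{\gamma}\colon X\mapsto\mathcal{L}_{X}\gamma$. The domain $T_{id}\mathrm{Diff}^{s+1}_{C_{0}}$ is the space of $H^{s+1}$ vector fields $X$ on $M$ whose restriction to $\partial M$ is tangent to $\partial M$ and induces a conformal Killing field on $(\partial M,g_{0}|_{\partial M})$, while $T_{\gamma}\mathscr{M}^{s}_{C}$ consists of $H^{s}$ symmetric $(0,2)$-tensors whose pullback to $\partial M$ lies in $C|_{\partial M}$.

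Next I would compute the formal $L^{2}$-adjoint $\alpha_{\gamma}^{*}$ via the identity
\[
\int_{M}\langle \mathcal{L}_{X}\gamma,h\rangle_{\gamma}\,dv_{\gamma}=-2\int_{M}\langle X,\delta_{\gamma}h\rangle_{\gamma}\,dv_{\gamma}+\text{(boundary terms)},
\]
and read off the natural boundary conditions for the second-order operator $P:=\alpha_{\gamma}^{*}\alpha_{\gamma}$ on vector fields from the boundary contributions. The main analytic step is to verify that $P$ with these boundary conditions defines an elliptic boundary value problem in the Lopatinsky--Shapiro sense. Granting this, standard elliptic theory delivers the $L^{2}$-orthogonal decomposition
\[
T_{\gamma}\mathscr{M}^{s}_{C}=\mathrm{Im}(\alpha_{\gamma})\oplus N_{\gamma},
\]
where $N_{\gamma}$ is the kernel of $\alpha_{\gamma}^{*}$ inside $T_{\gamma}\mathscr{M}^{s}_{C}$ equipped with the adjoint boundary conditions.

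Finally, I would set $\mathcal{S}$ to be the intersection of $\mathscr{M}^{s}_{C}$ with a sufficiently small ball centred at $\gamma$ in $\gamma+N_{\gamma}$ and verify (1)--(3) by the implicit function theorem exactly as in Ebin. The map $F$ in (3) has derivative at the base point equal to the splitting isomorphism, so after shrinking $U$ and $\mathcal{S}$ it is a local diffeomorphism onto a neighbourhood of $\gamma$; $I_{\gamma}$ acts on $T_{\gamma}\mathscr{M}^{s}_{C}$ by $L^{2}$-isometries preserving both summands, which gives (1); and (2) follows by a standard transversality/compactness argument once $\mathcal{S}$ is chosen small enough. For the variant with $\mathscr{M}^{s}_{C}$ replaced by $\mathscr{M}^{s}_{C^{(1)}_{0}}$, one runs the same scheme with the tangent space cut out by the additional $1$-jet and $H_{g}=0$ boundary conditions; the principal additional obstacle is re-verifying the Lopatinsky--Shapiro condition for this enlarged boundary system, which is where I expect the hardest work to lie.
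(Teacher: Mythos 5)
Your proposal takes essentially the same route as the paper: both adapt Ebin's argument by linearizing the orbit map to $\alpha\colon X\mapsto\mathcal{L}_{X}\gamma$ on the constrained tangent spaces, checking that the Green's-formula boundary term vanishes there, deriving the elliptic $L^{2}$-orthogonal splitting $T_{\gamma}\mathscr{M}^{s}_{C}=\mathrm{Im}\,\alpha\oplus\mathrm{Ker}\,\alpha^{*}$ from a boundary elliptic estimate for $\alpha^{*}\alpha$, and taking the slice to be a small ball in the normal complement (the paper exponentiates the weak $L^{2}$ metric along the normal bundle where you use the affine slice, an immaterial difference here). The one caveat is that property (2) rests not on a ``transversality/compactness'' argument but on the orbit map descending to a homeomorphism of $\mathrm{Diff}^{s+1}_{C_{0}}/I_{\gamma}$ onto a \emph{closed} subset (Lemma~\ref{lemm2.17}, Ebin's Proposition~6.13), a step your outline---like the paper---ultimately defers to Ebin.
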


First, we get the following lemma:
\begin{lemm}
\label{lemm2.9}
Under the identification 
$T_{id_{M}}\mathrm{Diff}^{s + 1} \cong H^{s + 1}(TM),$
\[
\begin{split}
(1)~T_{id_{M}} \mathrm{Diff}^{s+1}_{C_{0}} 
= \bigl\{ X &\in H^{s+1}(TM) \bigm| \exists \rho \in H^{s - 1/2}(M), \\ 
&^{g_{0}}\nabla_{i}X_{j} + ^{g_{0}}\nabla_{j}X_{i} = \rho g_{0}, 
~g_{0}(X,\nu_{g_{0}}) = 0~\mathrm{on}~\partial M \bigr\},
\end{split}
\]
where $^{g_{0}}\nabla$ and $X_{i}$ are respectively the Levi-Civita connection with respect to $g_{0}$ and the $i$-th component $g_{ij} X^{j}$ of $X = ( X^{j} )$ in terms of some local coordinates.
For $g \in \mathscr{M}^{s}_{C}$, 
\[
(2)~~~~~T_{g}\mathscr{M}^{s}_{C} = \bigl\{ h \in H^{s}(S^{2}T^{*}M) \bigm| \exists \rho \in H^{s - 1/2}(M), h = \rho g~\mathrm{on}~\partial M \bigr\}.
\]
Here, $T_{id_{M}} \mathrm{Diff}^{s+1}_{C_{0}}$ and $T_{g}\mathscr{M}^{s}_{C}$ represent the tangent spaces respectively.
Moreover, for $g \in \mathscr{M}^{s}_{C_{0}}$, we also have
\[
\begin{split}
(3)~~~~T_{g}\mathscr{M}^{s}_{C_{0}} = \bigl\{ h \in H^{s}(S^{2}T^{*}M) \bigm| &\exists \rho \in H^{s - 1/2}(M), h = \rho g~\mathrm{on}~\partial M, \\
& D_{g} H (h) = 0~\mathrm{on}~\partial M \bigr\},
\end{split}
\]
where $D_{g} H$ denotes the derivative of the mean curvatrue function $H$ at $g.$
And, for $g \in \mathscr{M}^{s}_{C^{1}_{0}},$
\[
(4)~~~~T_{g}\mathscr{M}^{s}_{C^{1}_{0}} = \bigl\{ h \in H^{s}(S^{2}T^{*}M) \bigm| \exists \rho \in H^{s - 1/2}(M), j^{1}_{\partial M} h = j^{1}_{\partial M}(\rho g) \bigr\}.
\]
\end{lemm}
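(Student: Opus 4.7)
The plan is to realize each of the four spaces as the zero set of a smooth map between Hilbert manifolds whose differential at the basepoint is surjective; by the implicit function theorem, the tangent space there then equals the kernel of the linearization, which matches the description in each formula. Since the four arguments run in parallel, I will focus on (1) and indicate the modifications needed for (2)--(4).

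For (1), define $\Psi(\phi) := \bigl((\phi^{*}g_{0})|_{\partial M}\bigr)_{0}$, the trace-free part (with respect to $g_{0}$) of $(\phi^{*}g_{0})|_{\partial M}$, regarded as a full symmetric $(0,2)$-tensor on $TM|_{\partial M}$. This is a smooth map from $\mathrm{Diff}^{s+1}$ into the Hilbert space of $H^{s-1/2}$ trace-free symmetric $(0,2)$-tensors on $TM|_{\partial M}$; since every diffeomorphism of the compact manifold $M$ preserves $\partial M$, one has $\mathrm{Diff}^{s+1}_{C_{0}} = \Psi^{-1}(0)$. Differentiating a curve $\phi_{t}$ with $\phi_{0} = \mathrm{id}_{M}$ and velocity $X$, and using $\tfrac{d}{dt}\big|_{t=0}\phi_{t}^{*}g_{0} = L_{X}g_{0}$ together with the Koszul formula $(L_{X}g_{0})_{ij} = {}^{g_{0}}\nabla_{i}X_{j} + {}^{g_{0}}\nabla_{j}X_{i}$, one obtains $d\Psi|_{\mathrm{id}_{M}}(X) = \bigl((L_{X}g_{0})|_{\partial M}\bigr)_{0}$. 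Its kernel consists exactly of those $X$ for which ${}^{g_{0}}\nabla_{i}X_{j} + {}^{g_{0}}\nabla_{j}X_{i} = \rho g_{0}$ on $\partial M$ for some $\rho$. The additional condition $g_{0}(X, \nu_{g_{0}}) = 0$ on $\partial M$ reflects the fact that a curve of diffeomorphisms of $M$ preserves $\partial M$, forcing its infinitesimal generator to be tangent to $\partial M$ there; combining these conditions yields (1).

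Parts (2)--(4) follow the same recipe. For (2), the defining map is $g \mapsto (g|_{\partial M})_{0}$, whose linearization at $g$ sends $h$ to $(h|_{\partial M})_{0}$ and has kernel $\{h : h = \rho g \text{ on } \partial M\}$. For (3) I adjoin the scalar constraint $H_{g} = 0$, whose linearization $D_{g}H(h) = 0$ contributes the extra equation in the kernel. For (4), the defining equation is $j^{1}_{\partial M}g = j^{1}_{\partial M}(f g_{0})$ for some $f \in C^{\infty}_{+}(M)_{N}$, and direct linearization produces $j^{1}_{\partial M}h = j^{1}_{\partial M}(\rho g)$ with $\rho = \dot{f}_{0}$ inheriting the Neumann property of $f$ on $\partial M$.

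The main obstacle is verifying that each linearization is genuinely surjective onto its target Hilbert space, since that is what lets the implicit function theorem identify the tangent space with the kernel. For (1), this amounts to realizing every trace-free symmetric $(0,2)$-tensor of class $H^{s-1/2}$ on $TM|_{\partial M}$ as $\bigl(L_{X}g_{0}\bigr)_{0}|_{\partial M}$ for some $X \in H^{s+1}(TM)$ tangent to $\partial M$; since $L_{X}g_{0}$ at a boundary point depends only on the 1-jet of $X$ there, this reduces to prescribing boundary jet data and extending into $M$ via a cutoff, which is a standard construction. The analogous surjectivity claims for (3) and (4)---surjectivity of $D_{g}H$ onto $H^{s-3/2}(\partial M)$ and of the appropriate truncated 1-jet map---are standard boundary-trace facts for first-order elliptic-type operators.
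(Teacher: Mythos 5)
Your proposal is correct and follows essentially the same route as the paper: the key computational content in both is that the derivative of the pullback action at $id_{M}$ is the Lie derivative $X \mapsto \mathscr{L}_{X}g_{0}$, that the conformal boundary condition linearizes to $\mathscr{L}_{X}g_{0} = \rho g_{0}$ on $\partial M$ (equivalently, vanishing trace-free part), and that tangency $g_{0}(X,\nu_{g_{0}})=0$ comes from diffeomorphisms preserving $\partial M$. The paper simply declares (2)--(4) immediate from the definitions and sketches (1), while you make the same argument explicit via the submersion/implicit-function-theorem formulation together with the surjectivity of the linearized trace maps; this is a legitimate and more complete write-up of the same idea rather than a different proof.
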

 
\begin{proof}
From the definition, (2), (3) and (4) are obvious.
For proving (1), we note that the derivative of a diffeomorphism-action via pull-back on metrics $g$ at $id_{M}$ coincides with the Lie derivative of $g$
under the above identification (see for instance Lemma~6.2 in \cite{ebin1970manifold}).
Then the first condition comes from the conformal condition on $\partial M$ and the second from the fact that any diffeomorphisms map $\partial M$ to itself.
And (3) follows in the same way.

Here, we also note that $\partial M$ may have some connected components. (Note that the number of components is finite since $M$ is compact.)
In fact, we assume that $\partial M = \coprod^{k}_{i = 1} \Sigma_{i}$, where $\Sigma_{i}$ is a connected component of $\partial M$ and $k \in \mathbb{Z}_{\ge 1}.$ 
Then, under the above identification,
$\eta_{t}(\Sigma_{i}) = \Sigma_{i}~\mathrm{for~all}~t \in (-\epsilon, \epsilon)$ for sufficiently small $\epsilon > 0$
as explained below.
Here, $\eta_{t} \in T_{id_{M}}\mathrm{Diff}^{s + 1}$ is the corresponding curve to a tangent vector.
Since $M$ is compact manifold, we can take some open neighborhoods of each $\Sigma_{i}$, $U_{i}$ such that 
$U_{i} \cap U_{j} = \emptyset$ for all $i \neq j$.
Consider $W(\partial M,U) := \bigl\{ f \in C^{0}(M,M) \bigm| f(\Sigma_{i}) \subset U_{i} \bigr\}$, 
then this is an open subset of $C^{0}(M,M)$ with respect to the compact-open topology.
Hence, this is an open neighborhood of $id_{M}$ in $C^{0}(M,M)$.
Since $\eta_{0} = id_{M}$ and $t \mapsto \eta_{t}$ is continuous, $\eta_{t} \in W(\partial M, U)$ for all $t$ with $|t| << 1$.
In paticular, $\eta_{t}(\Sigma_{i}) = \Sigma_{i}$ for all $i$ and $t$ with $|t| << 1$.
\end{proof}

From {\cite[Section 9]{palais1968foundations}}, $H^{s}(TM)$ is linearly isomorphic to a closed subspace of finite direct sum of $H^{s}(D_{n},\mathbb{R})$,
where $D_{n}$ is a closed $n$-dimensional disc. Therefore we obtain the following lemma in exactly the same way as in {\cite[Section 3]{ebin1970manifold}}.

\begin{lemm}[{\cite[Section 3]{ebin1970manifold}}]
$\mathrm{Diff}_{C_{0}}$ is a topological group.
Furthermore, for all  
$\sigma \in \mathrm{Diff}_{C_{0}} := \bigcap_{s \ge n/2 + 5} \mathrm{Diff}^{s}_{C_{0}}$,
the left(right) action  
$L_{\eta}(R_{\eta})~:~\mathrm{Diff}^{s}_{C_{0}} \rightarrow \mathrm{Diff}^{s}_{C_{0}}$ is smooth.
\end{lemm}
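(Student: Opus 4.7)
The plan is to mirror the argument of Ebin \cite[Section 3]{ebin1970manifold} for $\mathrm{Diff}^{s}$ and show that the additional boundary-conformal constraint defining $\mathrm{Diff}^{s}_{C_{0}}$ is preserved by all the relevant operations, so that the topological group and smoothness assertions descend to the submanifold. The hypothesis $s > n/2 + 5$ ensures, via Sobolev embedding, that elements of $\mathrm{Diff}^{s}_{C_{0}}$ are at least $C^{3}$ and that multiplication by and composition with $H^{s-1/2}$-functions on $\partial M$ remain well behaved.

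First I would verify the algebraic closure. If $\eta,\phi \in \mathrm{Diff}^{s}_{C_{0}}$, with $\eta^{*}g_{0} = fg_{0}$ and $\phi^{*}g_{0} = hg_{0}$ on $\partial M$ for positive $f,h \in H^{s-1/2}$, then since any element of $\mathrm{Diff}^{s}$ restricts to a diffeomorphism of $\partial M$ onto itself (as in Lemma \ref{lemm2.9}), we can compute on $\partial M$
\[
(\eta \circ \phi)^{*}g_{0} = \phi^{*}(fg_{0}) = (f\circ\phi)\cdot\phi^{*}g_{0} = (f\circ\phi)\,h\,g_{0},
\]
and $(f\circ\phi)\,h$ is again a positive $H^{s-1/2}$-function on $\partial M$ by the $\omega$-lemma in the appropriate Sobolev range; this shows $\eta\circ\phi \in \mathrm{Diff}^{s}_{C_{0}}$. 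For inversion, applying $(\eta^{-1})^{*}$ to both sides of $\eta^{*}g_{0}=fg_{0}$ on $\partial M$ gives $(\eta^{-1})^{*}g_{0} = (1/(f\circ\eta^{-1}))\,g_{0}$ on $\partial M$, and the right-hand coefficient again lies in $H^{s-1/2}_{+}$, hence $\eta^{-1} \in \mathrm{Diff}^{s}_{C_{0}}$.

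Next, the topological group structure. By Palais \cite[Section~9]{palais1968foundations} as quoted, $H^{s}(TM)$ is linearly isomorphic to a closed subspace of a finite direct sum of $H^{s}(D_{n},\mathbb{R})$, so all Sobolev and $\omega$-lemma estimates used by Ebin apply verbatim on $M$ with boundary. Ebin's proof that composition $(\phi,\eta)\mapsto \eta\circ\phi$ and inversion $\eta\mapsto \eta^{-1}$ are continuous on $\mathrm{Diff}^{s}$ therefore transfers. Since $\mathrm{Diff}^{s}_{C_{0}}$ carries the subspace topology and is stable under these operations by the computation above, it becomes a topological subgroup, and then $\mathrm{Diff}_{C_{0}} = \bigcap_{s} \mathrm{Diff}^{s}_{C_{0}}$ is a topological group in the induced ILH topology.

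Finally, the smoothness of $L_{\sigma}$ and $R_{\sigma}$ for $\sigma\in\mathrm{Diff}_{C_{0}}$ on $\mathrm{Diff}^{s}_{C_{0}}$ follows from the corresponding smoothness on $\mathrm{Diff}^{s}$: right composition $R_{\sigma}(\eta)=\eta\circ\sigma$ is smooth because $\sigma$ is $C^{\infty}$, and left composition $L_{\sigma}(\eta)=\sigma\circ\eta$ is smooth by the $\omega$-lemma applied to a $C^{\infty}$ map $\sigma$ (both arguments are exactly Ebin's Theorem 2.4 / Corollary 2.4.8). Since $\mathrm{Diff}^{s}_{C_{0}}$ is an embedded Hilbert submanifold of $\mathrm{Diff}^{s-2}$ and the operations $L_{\sigma}, R_{\sigma}$ leave it invariant by Step 1, smoothness restricts to $\mathrm{Diff}^{s}_{C_{0}}$. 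The main obstacle I anticipate is purely bookkeeping: checking at each stage that the conformal factor $f$ appearing after composition stays in the correct Sobolev class $H^{s-1/2}(\partial M)$ and remains positive, which requires a careful application of the trace theorem and the $\omega$-lemma at the borderline regularity $s-1/2$; once this is settled, everything else is a direct transcription of \cite[Section 3]{ebin1970manifold}.
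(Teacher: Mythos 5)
Your proposal is correct and follows essentially the same route as the paper: the paper's proof consists precisely of invoking Palais's result that $H^{s}(TM)$ embeds as a closed subspace of a finite direct sum of $H^{s}(D_{n},\mathbb{R})$ and then transferring Ebin's Section~3 arguments verbatim. Your explicit verification that the boundary conformal condition is preserved under composition and inversion is a detail the paper leaves implicit, but it is consistent with (and a useful supplement to) the intended argument.
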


\begin{rema}
It is well known that any $C^{1}$-diffeomorphism, which is an isometry of a smooth metric, is smooth~(see \cite{ebin1970manifold},~\cite{kobayashi1963foundations},~\cite{myers1939group}).
Therefore, from the Sobolev embedding, $I_{\gamma} \subset \mathrm{Diff}_{C_{0}}$ and $I_{\gamma}$ is the same for any such that $s > \frac{n}{2} + 1$.
\end{rema}

The following lemma follows from Section 5 in \cite{ebin1970manifold} and the fact that $\mathrm{Diff}^{s}_{C_{0}}$ is a submanifold of $\mathrm{Diff}^{s-2}$.
\begin{lemm}[{\cite[Section 5]{ebin1970manifold}}]
\label{lemm2.4}
(1) The natural inclusion
\[
i~:~I_{\gamma} \longrightarrow \mathrm{Diff}^{s}_{C_{0}}
\]
is smooth.

(2) $i$ is embedding, that is
for all $g \in I_{\gamma}$ , its derivative $D_{g}i$ is injective and its image is closed in 
$T_{g} \mathrm{Diff}^{s}_{C_{0}}.$

(3) The composition map:
\[
c~:~I_{\gamma} \times \mathrm{Diff}^{s}_{C_{0}} \longrightarrow \mathrm{Diff}^{s}_{C_{0}};~(g,\eta) \longmapsto g \circ \eta
\]
is smooth.

(4) Let $S := \bigcup_{\eta \in \mathrm{Diff}^{s}_{C}}T_{id_{M}}R_{\eta}(\mathscr{I})$
, then $S$ is a $C^{\infty}$ involutive subbundle of $T \mathrm{Diff}^{s}_{C_{0}},$
that is,
\[
X,Y \in S \Rightarrow [X,Y] \in S,
\]
where $\mathscr{I}$ is the Lie algebra of $I_{\gamma}$.
\end{lemm}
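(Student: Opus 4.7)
My plan is to adapt Ebin's argument from Section~5 of \cite{ebin1970manifold} to the boundary-conformal setting, using only two extra inputs: the preceding Remark, which ensures $I_{\gamma} \subset \mathrm{Diff}_{C_{0}}$ (so $I_\gamma$ consists of smooth diffeomorphisms), and the already-established fact that $\mathrm{Diff}^{s}_{C_{0}}$ is a Hilbert submanifold of $\mathrm{Diff}^{s-2}$, so that all the usual ILH considerations apply inside the conformal-boundary constraint.

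For (1) and (2), the Remark says $I_{\gamma}$ is independent of $s$ for $s > n/2+1$ and lies inside $\mathrm{Diff}_{C_{0}}$. By a Myers--Steenrod type argument, $I_\gamma$ is a finite-dimensional Lie group of smooth diffeomorphisms. Its Lie algebra $\mathscr{I}$ can be identified, via Lemma~\ref{lemm2.9}(1), with the Killing fields $X$ of $\gamma$ that are tangent to $\partial M$ and satisfy the conformal linearization $\mathcal{L}_{X}g_{0} = \rho g_{0}$ on $\partial M$; this is a finite-dimensional subspace of $H^{s+1}(TM)$. Because $I_\gamma$ is a finite-dimensional manifold whose elements are all $C^\infty$, the inclusion into each $\mathrm{Diff}^{s}_{C_{0}}$ is smooth (compose with the smooth maps given by the finitely many chart coordinates and use that finite-dimensional smooth charts land in $C^\infty$). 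The derivative $D_{g}i$ at $g \in I_\gamma$ is injective because $i$ is set-theoretically an inclusion; its image is a finite-dimensional subspace of the Hilbert space $T_{g}\mathrm{Diff}^{s}_{C_{0}}$, hence automatically closed, giving (2).

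For (3), write $c(g,\eta) = L_{g}\eta = g\circ\eta$. Since $g \in I_{\gamma} \subset \mathrm{Diff}_{C_{0}}$ (smooth diffeomorphism), the preceding lemma gives that each $L_g$ is a smooth self-map of $\mathrm{Diff}^{s}_{C_{0}}$. Joint smoothness in $(g,\eta)$ then follows exactly as in \cite[Section 5]{ebin1970manifold}: express $c$ locally as $c(g,\eta)(x) = g(\eta(x))$ and use that the evaluation/composition map $C^\infty(M,M) \times H^{s}(M,M) \to H^{s}(M,M)$ is smooth when the first factor is composed with smooth maps, combined with the smoothness of $i$ from (1) and the submanifold structure coming from $\mathrm{Diff}^{s}_{C_{0}} \hookrightarrow \mathrm{Diff}^{s-2}$.

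For (4), note $S = \bigcup_{\eta}(R_{\eta})_{*}\mathscr{I}$ is the image of the trivial bundle $\mathrm{Diff}^{s}_{C_{0}} \times \mathscr{I}$ under the smooth bundle map $(\eta,X)\mapsto (R_{\eta})_{*}X \in T_{\eta}\mathrm{Diff}^{s}_{C_{0}}$, so $S$ is a $C^\infty$ subbundle of rank $\dim \mathscr{I}$. For involutivity, pick a basis $X_{1},\dots,X_{k}$ of $\mathscr{I}$ and let $\widetilde{X}_{j}(\eta) := (R_{\eta})_{*}X_{j}$ be the corresponding right-invariant sections spanning $S$; any section of $S$ is a combination $\sum a_{j}\widetilde{X}_{j}$ with smooth $a_j$. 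A direct Leibniz expansion reduces involutivity to the identity $[\widetilde{X}_{i},\widetilde{X}_{j}] = \widetilde{[X_{i},X_{j}]_{\mathscr{I}}}$, which holds because $\mathscr{I}$ is a Lie subalgebra (of the Lie algebra of Killing-type fields for $\gamma$) and because right-invariant vector fields on $\mathrm{Diff}^{s}_{C_{0}}$ satisfy the usual bracket relation, exactly as in \cite[Section 5]{ebin1970manifold}.

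The only subtle point, and the place where the boundary-conformal setting requires care, is checking that the right-invariant extensions $\widetilde{X}_{j}$ actually land in $T\mathrm{Diff}^{s}_{C_{0}}$ rather than merely in $T\mathrm{Diff}^{s}$; this is where Lemma~\ref{lemm2.9}(1) and the characterization of $\mathscr{I}$ via the conformal boundary condition are essential, together with the fact that the defining conditions of $\mathrm{Diff}^{s}_{C_{0}}$ are invariant under right translation. I expect this verification, and the identification of $\mathscr{I}$ with the Killing fields respecting the conformal boundary condition, to be the main obstacle; the remaining arguments are near-verbatim transcriptions from \cite{ebin1970manifold} enabled by the submanifold property of $\mathrm{Diff}^{s}_{C_{0}}$ inside $\mathrm{Diff}^{s-2}$.
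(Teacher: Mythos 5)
Your proposal is correct and follows essentially the same route as the paper, which proves this lemma simply by invoking Section~5 of Ebin's paper together with the fact that $\mathrm{Diff}^{s}_{C_{0}}$ is a Hilbert submanifold of $\mathrm{Diff}^{s-2}$ and the preceding Remark placing $I_{\gamma}$ inside $\mathrm{Diff}_{C_{0}}$. Your additional verifications (that $\mathscr{I}$ consists of Killing fields of $\gamma$ satisfying the conformal boundary condition of Lemma~\ref{lemm2.9}(1), and that right translation preserves $\mathrm{Diff}^{s}_{C_{0}}$ so the right-invariant extensions stay tangent to it) are exactly the points the paper leaves implicit.
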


Using lemma~\ref{lemm2.4} and the Frobenius's theorem(see~{\cite[Chapter~6,~Theorem~2]{lang}}), 
we obtain a Banach manifold structure on $\mathrm{Diff}^{s}_{C_{0}} / I_{\gamma}$ (see~{\cite[Proposition~5.8,~5.9]{ebin1970manifold}}).
And we can show the exsitence of a local section by using this Banach-coordinate-charts:

\begin{lemm}[{\cite[Proposition~5.10]{ebin1970manifold}}]
For any $I_{\gamma}\eta \in \mathrm{Diff}^{s}_{C_{0}} / I_{\gamma},$ 
there exists a local section 
$\pi:~\mathrm{Diff}^{s}_{C_{0}} \longrightarrow \mathrm{Diff}^{s}_{C_{0}} / I_{\gamma}$ 
defined on a neighborhood $I_{\gamma}\eta$.
\end{lemm}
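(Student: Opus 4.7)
The plan is to mimic Ebin's construction of local slices for the right $I_{\gamma}$-action on $\mathrm{Diff}^{s}_{C_{0}}$ and read off the section from a transversal submanifold. Since the quotient map $\pi\colon\mathrm{Diff}^{s}_{C_{0}}\to\mathrm{Diff}^{s}_{C_{0}}/I_{\gamma}$ is what carries the ``Banach manifold structure on the quotient'' alluded to just before the statement, a local section of $\pi$ amounts to choosing a submanifold through some lift of the coset which is transverse to the $I_{\gamma}$-orbit, and showing that $\pi$ restricts to a local diffeomorphism on it. The whole argument is local, so by right-translating I would first reduce to the case $\eta=id_{M}$, using that $R_{\eta}\colon\mathrm{Diff}^{s}_{C_{0}}\to\mathrm{Diff}^{s}_{C_{0}}$ is smooth (Lemma after Lemma~\ref{lemm2.9}) and carries the orbit $I_{\gamma}$ diffeomorphically onto $I_{\gamma}\eta$.

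Near $id_{M}$ I would invoke Frobenius's theorem for the involutive $C^{\infty}$-subbundle $S\subset T\mathrm{Diff}^{s}_{C_{0}}$ from Lemma~\ref{lemm2.4}(4), whose fiber at $id_{M}$ is precisely $\mathscr{I}\subset T_{id_{M}}\mathrm{Diff}^{s}_{C_{0}}$. This produces a Banach chart
\[
\Phi\colon V\times W\longrightarrow\mathrm{Diff}^{s}_{C_{0}}
\]
around $id_{M}$, with $V\subset\mathscr{I}$ and $W\subset\mathscr{I}^{\perp}$ open neighborhoods of $0$ (using any topological complement of $\mathscr{I}$ guaranteed by Lemma~\ref{lemm2.4}(2)), such that each slice $\Phi(V\times\{w\})$ is an integral leaf of $S$. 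Since $I_{\gamma}$ (by Lemma~\ref{lemm2.4}(1), (3)) is itself an immersed integral submanifold of $S$ through $id_{M}$ of the right dimension, it must coincide, on a possibly smaller neighborhood, with the leaf $\Phi(V\times\{0\})$. Thus $\mathcal{T}:=\Phi(\{0\}\times W)$ is a smooth Banach submanifold of $\mathrm{Diff}^{s}_{C_{0}}$ transverse to the $I_{\gamma}$-orbit at $id_{M}$, and on a smaller neighborhood of $id_{M}$ the map $\mathcal{T}\to\mathrm{Diff}^{s}_{C_{0}}/I_{\gamma}$ obtained by composing with $\pi$ is a bijection (two points of $\mathcal{T}$ lying in the same $I_{\gamma}$-coset would have to lie in the same leaf, hence both be $\Phi(0,0)$).

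To get an actual local section, I would use the inverse function theorem on Banach manifolds: the Frobenius chart $\Phi$ exhibits $\pi\circ\Phi$ as independent of the $V$-factor, so the differential of $\pi|_{\mathcal{T}}$ at $id_{M}$ is an isomorphism onto $T_{[id_{M}]}(\mathrm{Diff}^{s}_{C_{0}}/I_{\gamma})$ (the latter identified with $\mathscr{I}^{\perp}$, which is precisely how the quotient's manifold structure is defined from the Frobenius chart). Hence $\pi|_{\mathcal{T}}$ is a local diffeomorphism onto an open neighborhood $U$ of $[id_{M}]$, and its inverse is the desired local section. Translating back by $R_{\eta}$ gives the statement at an arbitrary coset.

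The only delicate point, and the one I would spend the most effort on, is the identification of the leaf of $S$ through $id_{M}$ with the orbit $I_{\gamma}$: this needs the isotropy group to be not just an immersed submanifold, but to fill out the entire integral manifold of $S$ through $id_{M}$ in a neighborhood. Here I would use that by the remark preceding Lemma~\ref{lemm2.4} every element of $I_{\gamma}$ is automatically smooth (so no Sobolev-exponent issues arise), combined with Lemma~\ref{lemm2.4}(1)--(2) which guarantees that $I_{\gamma}\hookrightarrow\mathrm{Diff}^{s}_{C_{0}}$ is a closed embedding whose tangent space at $id_{M}$ is exactly $\mathscr{I}$; uniqueness of maximal integral manifolds in the Banach Frobenius theorem (\cite{lang}, Ch.~6, Thm.~2) then forces the local coincidence.
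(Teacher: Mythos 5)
Your proposal is correct and follows essentially the same route as the paper, which simply invokes the Banach coordinate charts obtained from Lemma~\ref{lemm2.4} and the Frobenius theorem and then constructs the section exactly as in Ebin's Proposition~5.10 — precisely the Frobenius-chart/transversal argument you spell out. The extra detail you supply (reduction to the identity coset by right translation, identification of the leaf through $id_{M}$ with $I_{\gamma}$, inversion of $\pi$ on the transversal) is just an unpacking of that cited construction, not a different method.
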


\begin{proof}
Using the above Banach-coordinate-charts, we can construct a local section exactly same as Proposition 5.10 in \cite{ebin1970manifold}.
\end{proof}

For any $\gamma \in \mathscr{M}_{C}$, $\sigma \in \mathscr{M}_{C^{(1)}_{0}}$, we define
\[
\psi_{\gamma}:\mathrm{Diff}^{s+1}_{C_{0}} \longrightarrow \mathscr{M}^{s}_{C}~;~\psi_{\gamma}(\eta) := \eta^{*}\gamma
\]
and
\[
\psi^{0}_{\sigma}:\mathrm{Diff}^{s+1}_{C_{0}} \longrightarrow \mathscr{M}^{s}_{C^{(1)}_{0}}~;~\psi_{\sigma}(\eta) := \eta^{*}\sigma.
\]
Then, by the definition of $I_{\gamma}$, it naturally induces
$\phi_{\gamma}:~\mathrm{Diff}^{s}_{C_{0}}/I_{\gamma} \longrightarrow \mathscr{M}^{s}_{C}$ and $\phi^{0}_{\sigma}:~\mathrm{Diff}^{s}_{C_{0}}/I_{\sigma} \longrightarrow \mathscr{M}^{s}_{C^{(1)}_{0}}$.
Moreover, from the existence of a local section and the definition, $\phi_{\gamma},~\phi^{0}_{\sigma}$ are smooth injective maps.

Here, we will show that $\phi_{\gamma}:~\mathrm{Diff}^{s+1}_{C_{0}} / I_{\gamma} \rightarrow \mathscr{M}^{s}_{C}$ 
and $\phi^{0}_{\sigma}:~\mathrm{Diff}^{s+1}_{C_{0}} / I_{\sigma} \rightarrow \mathscr{M}^{s}_{C^{(1)}_{0}}$
are immersions (i.e., its derivation is injective and has closed image).

\begin{rema}
We can similarly define $\phi_{\gamma}:~\mathrm{Diff}^{s} / I_{\gamma} \rightarrow \mathscr{M}^{s}$,~but it is not an immersion in general.
\end{rema}

Next, we will show that the image of $D_{\eta}\psi_{\gamma}$ is closed in $T_{\psi_{\gamma}(\eta)} \mathscr{M}^{s}_{C}$ and 
imege of $D_{\eta} (\psi^{0}_{\sigma})$ is closed in $T_{\psi^{0}_{\sigma}(\eta)} \mathscr{M}^{s}_{C^{(1)}_{0}}$.

As mentioned in the proof of lemma \ref{lemm2.9}, under the identification as in lemma \ref{lemm2.9},
$D_{id_{M}}\psi_{\gamma} (X) = \mathscr{L}_{X}(\gamma)~(X \in T_{id_{M}} \mathrm{Diff}_{C_{0}}^{s+1})$, 
and $D_{id_{M}} \psi^{0}_{\sigma} (X) = \mathscr{L}_{X}(\sigma),~(X \in T_{id_{M}} \mathrm{Diff}_{C_{0}}^{s+1})$.
For simplicity, we put the Lie derivative with respect to $\gamma$ as $\alpha$, that is $\alpha(\bullet) := \mathscr{L}_{\bullet}(\gamma).$

For $X \in H^{s + 1}(M),~Y_{i} \in H^{s}(M),
~Z_{ij} \in H^{s - 1}(M)~( i,j = 1, \dots, n),$ we set
\[
A^{s+1}_{(X,Y,Z)} := \bigl\{ u \in T_{id_{M}} \mathrm{Diff}^{s+1}_{C_{0}} \bigm| u = X ,
\nabla_{i} u = Y_{i} , \nabla^{2}_{ij} u = Z_{ij}~\mathrm{on}~\partial M \bigr\},
\]
\[
\begin{split}
B^{s}_{(X,Y,Z)}
:= \bigl\{ \eta \in T_{\gamma}\mathscr{M}^{s}_{C} \bigm| \eta(\bullet , *)  
&= \gamma \bigl( Y(\bullet) , * \bigr) + \gamma \bigl( \bullet , Y(*) \bigr), \\
&-~_{\gamma}Tr_{1,2}(\nabla \eta) = 4 \bigl( \delta \delta^{*} u \bigr)~\mathrm{on}~\partial M \bigr\},
\end{split}
\]
where
$_{\gamma}Tr_{1,2}$, $\delta$ and $\delta^{*}$ denote respectively the (1,2)-contraction with respect to $\gamma,$
the divergence operator of $\gamma$ : $\delta = -_{\gamma}Tr_{1,2} \circ \nabla$
and its formal adjoint : $\delta^{*} = Sym \circ \nabla$.
And `` $u$ " in the definition of $B^{s}_{(X,Y,Z)}$ is an element of $T_{id_{M}} \mathrm{Diff}^{s+1}_{C_{0}}$ such that
$\nabla_{i} u |_{\partial M} = Y_{i}~\mathrm{and}~\nabla^{2}_{ij} u |_{\partial M} = Z_{ij}.$
Thus, $\bigl( \delta \delta^{*} u \bigr) |_{\partial M}$ is determined by them.
And these are Hilbert spaces.

Since the composition map $\alpha^{*} \alpha$ is elliptic, we obtain 
the following boundary estimate (see {\cite[Theorem 6.6]{gilbarg2013elliptic}}):

There exists a positive constant $C$ such that for any $u \in A^{s + 1}_{(X,Y,Z)}$, 
\[
||u||_{k} \le C \bigl( \bigl|\bigl| (\alpha^{*} \alpha)u \bigr|\bigr|_{k-2} + ||u||_{k-2} + \bigl|\bigl| \Tilde{X} \bigr|\bigr|_{k} \bigr) 
\]
where $\Tilde{X}$ is an extension to $M$ of $X.$

For $u \in T_{id.} \mathrm{Diff}^{s+1}_{C_{0}}$ and $h \in T_{\gamma}\mathscr{M}^{s}_{C}$ , from the Green's formula,
\[
(\alpha u, h) = (u,\alpha^{*} h) + 2\bigl( u^{\flat},h(\nu,\bullet) \bigr)_{\partial M},
\]
where 
$(,)~\mathrm{and}~(,)_{\partial M}$ are $L^{2}$-inner product with respect to $\gamma~\mathrm{and}~\gamma |_{\partial M}$ .
And $\nu$ is the outer unit normal vector along $\partial M$ with respect to $\gamma.$

Since $h \in T_{id.} \mathrm{Diff}^{s+1}_{C_{0}}$, from lemma \ref{lemm2.9}, the second term vanishes. Thus we get  
\[
(\alpha u, h) = (u,\alpha^{*} h).
\]
Therefore 
for all $u,v \in T_{id.} \mathrm{Diff}^{s+1}_{C_{0}}$,
\begin{equation}
\bigl( (\alpha^{*} \alpha) u, v \bigr) = \bigl( u,(\alpha^{*} \alpha) v \bigr).  \label{eq:orth}
\end{equation}

From the closed range theorem({\cite[Lemma 5.10]{gilbarg2013elliptic}}), Proposition~6.8 and 6.9 in \cite{ebin1970manifold}, we get the following:

\begin{lemm}
\label{lemm3.15}

$\alpha(A^{s+1}_{(X,Y,Z)})$ is a closed subspace of $B^{s}_{(X,Y,Z)}$ and there is an orthogonal decomposition:
\[
B^{s}_{(X,Y,Z)} = \mathrm{Im} \bigl( \alpha |_{A^{s+1}_{(X,Y,Z)}} \bigr) \oplus \mathrm{Ker} \bigl( \alpha^{*} |_{B^{s}_{(X,Y,Z)}} \bigr).
\]
\end{lemm}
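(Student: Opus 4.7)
The plan is to follow the template of Propositions~6.8 and 6.9 in \cite{ebin1970manifold}, adapted to our boundary condition. Set $\alpha_{0} := \alpha|_{A^{s+1}_{(X,Y,Z)}}$. The argument splits into two logically distinct parts: (a) show that $\operatorname{Im}(\alpha_{0})$ is closed in $B^{s}_{(X,Y,Z)}$, using ellipticity of $\alpha^{*}\alpha$ together with the boundary estimate just quoted before the lemma; (b) use the pairing identity (\ref{eq:orth}) combined with the Closed Range Theorem to obtain the orthogonal splitting.

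First I would reduce to a linear (rather than affine) problem. Fix any $u_{0} \in A^{s+1}_{(X,Y,Z)}$; then $u \mapsto u - u_{0}$ identifies $A^{s+1}_{(X,Y,Z)}$ with the linear subspace $A^{s+1}_{(0,0,0)}$ of vectors whose $2$-jet vanishes on $\partial M$ while still satisfying the tangency $g_{0}(\cdot,\nu_{g_{0}})|_{\partial M}=0$, and similarly for $B^{s}_{(X,Y,Z)}$. On $A^{s+1}_{(0,0,0)}$ the elliptic estimate simplifies (taking $\tilde{X}=0$) to $\|u\|_{s+1} \le C(\|(\alpha^{*}\alpha)u\|_{s-1} + \|u\|_{s-1})$. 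Since the embedding $H^{s+1}\hookrightarrow H^{s-1}$ is compact, $\ker(\alpha^{*}\alpha) \cap A^{s+1}_{(0,0,0)}$ is finite-dimensional; orthogonalizing against this kernel and applying a standard Peter--Paul/Rellich absorbing argument removes the lower-order term, yielding $\|u\|_{s+1}\le C\|(\alpha^{*}\alpha)u\|_{s-1}$. Via (\ref{eq:orth}) and Cauchy--Schwarz this upgrades to $\|u\|_{s+1}\le C\|\alpha u\|_{s}$, which is precisely the estimate required for closed range of $\alpha_{0}$.

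For the decomposition, I would invoke (\ref{eq:orth}) directly: for $u \in A^{s+1}_{(X,Y,Z)}$ the tangency $g_{0}(u,\nu_{g_{0}})|_{\partial M}=0$ forces the boundary term in Green's formula to vanish against any $h$ satisfying $h = \rho\gamma$ on $\partial M$, so $\operatorname{Im}(\alpha_{0}) \perp \ker(\alpha^{*}|_{B^{s}_{(X,Y,Z)}})$. Combined with the closed-range conclusion of step~(a), the Hilbert-space form of the Closed Range Theorem (as cited via \cite[Lemma~5.10]{gilbarg2013elliptic}) gives
\[
B^{s}_{(X,Y,Z)} \;=\; \operatorname{Im}(\alpha_{0}) \;\oplus\; \ker\bigl(\alpha^{*}|_{B^{s}_{(X,Y,Z)}}\bigr),
\]
which is the claim.

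The main obstacle I expect is step~(a), specifically checking that $\alpha_{0}$ genuinely maps $A^{s+1}_{(X,Y,Z)}$ into $B^{s}_{(X,Y,Z)}$ (the divergence compatibility $-{}_{\gamma}\mathrm{Tr}_{1,2}(\nabla\eta) = 4\delta\delta^{*}u$ in the definition of $B^{s}_{(X,Y,Z)}$ must be recognized as automatically satisfied by $\eta = \alpha u = 2\delta^{*}u$), and ensuring that the absorbing argument respects the boundary constraints encoded in $A^{s+1}_{(0,0,0)}$ and $B^{s}_{(0,0,0)}$. The tangency condition from $T_{id_{M}}\mathrm{Diff}^{s+1}_{C_{0}}$ and the conformal condition $h=\rho\gamma$ on $\partial M$ from $T_{\gamma}\mathscr{M}^{s}_{C}$ are precisely what turns Green's formula into a genuine adjoint relation, rather than a merely formal one, and it is this interplay that must be tracked carefully.
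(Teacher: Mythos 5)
Your proposal is correct and takes essentially the same approach as the paper: the paper's proof of this lemma consists of citing the closed range theorem and Propositions 6.8 and 6.9 of \cite{ebin1970manifold} together with the elliptic boundary estimate and the identity (\ref{eq:orth}) established just before the statement, and your argument is the standard unpacking of exactly those ingredients. The only minor slip is that the passage from $\|u\|_{s+1}\le C\|(\alpha^{*}\alpha)u\|_{s-1}$ to $\|u\|_{s+1}\le C\|\alpha u\|_{s}$ follows from boundedness of $\alpha^{*}\colon H^{s}\to H^{s-1}$ rather than from Cauchy--Schwarz, which does not affect the conclusion.
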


From lemma \ref{lemm3.15}, we get that
\[
T_{\gamma}\mathscr{M}^{s}_{C} = \mathrm{Im} \bigl( \alpha |_{T_{id.} \mathrm{Diff}^{s+1}_{C_{0}}} \bigr) + \mathrm{Ker} \bigl( \alpha^{*} |_{T_{\gamma}\mathscr{M}^{s}_{C}} \bigr).
\]
On the other hand, from the equation~(\ref{eq:orth}),
\[
\mathrm{Im} \bigl( \alpha |_{T_{id.} \mathrm{Diff}^{s+1}_{C_{0}}} \bigr) \perp \mathrm{Ker} \bigl( \alpha^{*} |_{T_{\gamma}\mathscr{M}^{s}_{C}} \bigr).
\]
Hence we get an orthogonal decomposition
\[
T_{\gamma}\mathscr{M}^{s}_{C} = \mathrm{Im} \bigl( \alpha |_{T_{id.} \mathrm{Diff}^{s+1}_{C_{0}}} \bigr) \oplus \mathrm{Ker} \bigl( \alpha^{*} |_{T_{\gamma}\mathscr{M}^{s}_{C}} \bigr).
\]
In paticular, $\mathrm{Im}~D_{id.}\psi$ is closed subspace of $T_{\gamma}\mathscr{M}^{s}_{C}$.
Therefore, for each $\eta \in \mathrm{Diff}^{s+1}_{C_{0}}$,
$\mathrm{Im}~D_{\eta}\psi_{\gamma}$ is a closed subspace of $T_{\psi_{\gamma}(\eta)} \mathscr{M}^{s}_{C}$.

Similarly, (however we consider up to upper order boundary datum,) we also get the decomposition
\[
T_{\gamma}\mathscr{M}^{s}_{C^{(1)}_{0}} = \mathrm{Im} \bigl( \alpha |_{T_{id.} \mathrm{Diff}^{s+1}_{C_{0}}} \bigr) \oplus \mathrm{Ker} \bigl( \alpha^{*} |_{T_{\gamma}\mathscr{M}^{s}_{C^{(1)}_{0}}} \bigr).
\]
and that $\mathrm{Im}~D_{\eta}\psi^{0}_{\sigma}$ is a closed subspace of $T_{\psi^{0}_{\sigma}(\eta)}\mathscr{M}^{s}_{C^{(1)}_{0}}$ for all $\eta \in \mathrm{Diff}^{s+1}_{C_{0}}$.

Moreover, we can show that $D_{[\eta]}\phi_{\gamma}$ and $D_{[\eta]} \phi^{0}_{\sigma}$ are injective in the same way as Proposition 6.11 in \cite{ebin1970manifold}.
Consequently, we obtain the following:

\begin{lemm}
\label{lemm2.16}

\[
\phi_{\gamma}~:~\mathrm{Diff}^{s+1}_{C_{0}} / I_{\gamma} \longrightarrow \mathscr{M}^{s}_{C}~\mathrm{and}~
\phi^{0}_{\sigma}~:~\mathrm{Diff}^{s+1}_{C_{0}} / I_{\gamma} \longrightarrow \mathscr{M}^{s}_{C^{(1)}_{0}}
\]
are smooth injective immersions.
\end{lemm}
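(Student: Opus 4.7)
My plan is to assemble three properties for $\phi_\gamma$---smoothness, injectivity as a map, and injectivity with closed image of its derivative---and then run the identical argument for $\phi^0_\sigma$ with $\mathscr{M}^s_C$ replaced by $\mathscr{M}^s_{C^{(1)}_0}$.

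Smoothness of $\phi_\gamma$ is immediate from the factorization $\phi_\gamma\circ\pi = \psi_\gamma$: precompose the smooth pullback map $\psi_\gamma$ with the smooth local sections of $\pi$ provided by Lemma 2.5. Injectivity as a map is also immediate from the definition of $I_\gamma$: if $\eta_1^*\gamma = \eta_2^*\gamma$, then $\eta_2\eta_1^{-1}\in I_\gamma$, so $I_\gamma\eta_1 = I_\gamma\eta_2$. Closedness of $\mathrm{Im}(D_{[\eta]}\phi_\gamma)$ has already been established via the orthogonal decomposition
\[
T_\gamma\mathscr{M}^s_C = \mathrm{Im}\bigl(\alpha|_{T_{id_M}\mathrm{Diff}^{s+1}_{C_0}}\bigr) \oplus \mathrm{Ker}\bigl(\alpha^*|_{T_\gamma\mathscr{M}^s_C}\bigr)
\]
and its translates, so only injectivity of $D_{[\eta]}\phi_\gamma$ remains.

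For this I would reduce to $[\eta]=[id_M]$ via right translation $R_\eta$, which is smooth on $\mathrm{Diff}^{s+1}_{C_0}$, sends the identity coset to $I_\gamma\eta$, and satisfies $\psi_\gamma\circ R_\eta = \eta^*\circ\psi_\gamma$. The problem thus reduces to showing that any $Y\in T_{id_M}\mathrm{Diff}^{s+1}_{C_0}$ with $\mathcal{L}_Y\gamma = 0$ lies in $T_{id_M}I_\gamma$. By Lemma 2.9(1) such a $Y$ is tangent to $\partial M$ and $g_0$-conformal on $\partial M$; by elliptic regularity for $\alpha^*\alpha$ it is in fact smooth. Integrating $Y$ yields a smooth one-parameter family $\{\eta_t\}$ of $\gamma$-isometries preserving $\partial M$, and restricting $\eta_t^*\gamma = \gamma$ to the boundary (using $\gamma = fg_0$ there) gives $(\eta_t^*g_0)|_{\partial M} = \bigl(f/(f\circ\eta_t)\bigr)g_0|_{\partial M}$. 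Hence $\eta_t\in\mathrm{Diff}_{C_0}$, and so $\eta_t\in I_\gamma$; in particular $Y=\dot\eta_0\in T_{id_M}I_\gamma$, proving injectivity of $D_{[id_M]}\phi_\gamma$ and, by the reduction, of $D_{[\eta]}\phi_\gamma$.

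The delicate point will be the right-translation reduction: Lemma 2.3 as stated guarantees smoothness of $R_\eta$ only for smooth $\eta$, whereas here $\eta$ has only $H^{s+1}$ regularity. One must therefore verify, as in Section~3 of \cite{ebin1970manifold}, that $R_\eta$ is nevertheless smooth on $\mathrm{Diff}^{s+1}_{C_0}$ for arbitrary $\eta\in\mathrm{Diff}^{s+1}_{C_0}$ and descends to the quotient; alternatively, one may bypass the reduction by computing $D_\eta\psi_\gamma$ directly at $\eta$ via the substitution $Y=X\circ\eta^{-1}$, rerunning the Killing-integration argument with $\gamma$ replaced by $\eta^*\gamma$ and the appropriate $\eta$-dependent boundary conditions. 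This is the boundary analogue of Proposition~6.11 in \cite{ebin1970manifold}.
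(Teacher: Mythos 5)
Your proposal is correct and follows essentially the same route as the paper: smoothness and injectivity of $\phi_\gamma$ from the local section and the definition of $I_\gamma$, closedness of the image of the derivative from the orthogonal decomposition $T_\gamma\mathscr{M}^s_C = \mathrm{Im}\,\alpha \oplus \mathrm{Ker}\,\alpha^*$, and injectivity of $D_{[\eta]}\phi_\gamma$ by the Killing-field integration argument of Proposition~6.11 in \cite{ebin1970manifold}, which is exactly the step the paper delegates to that reference. Your added check that the flow of a Killing field lands in $\mathrm{Diff}_{C_0}$ (via $\eta_t^*g_0 = (f/(f\circ\eta_t))g_0$ on $\partial M$) is the right boundary adaptation, and the right-translation subtlety you flag is handled as in Section~3 of \cite{ebin1970manifold}.
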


Moreover, we can prove the following in the same way as Proposition 6.13 in \cite{ebin1970manifold}:

\begin{lemm}
\label{lemm2.17}
Let $s > \frac{n}{2} + 4$ , then
\[
\phi_{\gamma}~:~\mathrm{Diff}^{s+1}_{C_{0}} / I_{\gamma} \longrightarrow \mathscr{M}^{s}_{C}
\]
is a homeomorphism mapping $\mathrm{Diff}^{s+1}_{C_{0}} / I_{\gamma}$ onto a closed subspace of $\mathscr{M}^{s}_{C}$ .
Therefore, in paticular, $\phi_{\gamma}$ is an embedding.
The same statement also holds when we replace respectively $\mathscr{M}^{s}_{C}$ and $\phi$ by $\mathscr{M}^{s}_{C^{(1)}_{0}}$ and $\phi^{0}$.
\end{lemm}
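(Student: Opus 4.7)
The plan is to reduce the assertion to a single compactness statement and then adapt Ebin's argument for Proposition~6.13 to the present boundary setting. Since Lemma~\ref{lemm2.16} already furnishes that $\phi_{\gamma}$ is a smooth injective immersion, it suffices to verify simultaneously that its image is closed in $\mathscr{M}^{s}_{C}$ and that $\phi_{\gamma}^{-1}$ is continuous on this image. Both follow at once from the following claim: if $\{\eta_{n}\} \subset \mathrm{Diff}^{s+1}_{C_{0}}$ is any sequence with $\eta_{n}^{*}\gamma \to h$ in $\mathscr{M}^{s}_{C}$, then, after passing to a subsequence and pre-composing with appropriate elements of $I_{\gamma}$, $\eta_{n}$ converges in $\mathrm{Diff}^{s+1}_{C_{0}}$ to some $\eta_{\infty}$ satisfying $\eta_{\infty}^{*}\gamma = h$.

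To establish this, I would first use that $s > n/2 + 4$ so that the Sobolev embedding $H^{s} \hookrightarrow C^{4}$ yields $C^{4}$-convergence $\eta_{n}^{*}\gamma \to h$. In particular the metrics $\eta_{n}^{*}\gamma$ are uniformly equivalent to $\gamma$ with uniformly bounded derivatives up to order four. Regarding $\eta_{n} : (M, \eta_{n}^{*}\gamma) \to (M,\gamma)$ as an isometry of $C^{4}$-close metrics, one obtains in standard fashion equicontinuity and uniform boundedness of $\{\eta_{n}\}$ as maps $(M,\gamma) \to (M,\gamma)$, so by Arzel\`a--Ascoli a subsequence converges uniformly to some continuous $\eta_{\infty} : M \to M$.

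Next, I would bootstrap the regularity of this limit via elliptic estimates. In $\gamma$-harmonic local coordinates, the relation $\eta_{n}^{*}\gamma = g_{n}$ becomes an elliptic system for the components of $\eta_{n}$ with inhomogeneous term controlled by $g_{n}$ and its derivatives, and the interior plus boundary Schauder/Sobolev estimates (as used earlier to prove Lemma~\ref{lemm3.15}) iterated with the uniform $C^{0}$-bound on $\eta_{n}$ yield $H^{s+1}$-convergence along a subsequence. The $C^{0}$-closeness to a diffeomorphism guarantees via a degree argument that $\eta_{\infty}$ itself is a diffeomorphism, and the equation passes to the limit so that $\eta_{\infty}^{*}\gamma = h$. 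Because the boundary conformal condition $\eta_{n}^{*}g_{0} = \tilde{f}_{n} g_{0}$ on $\partial M$ is preserved under $C^{1}$-limits, $\eta_{\infty}$ lies in $\mathrm{Diff}^{s+1}_{C_{0}}$. Passing to the quotient finally handles the residual $I_{\gamma}$-ambiguity: a priori one may need to premultiply by elements of $I_{\gamma}$ to arrange the convergence, but the cosets $[\eta_{n}]$ themselves converge in $\mathrm{Diff}^{s+1}_{C_{0}}/I_{\gamma}$, proving the continuity of $\phi_{\gamma}^{-1}$ and the closedness of the image.

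The main obstacle compared to the closed case treated by Ebin is carrying out the elliptic bootstrap up to the boundary rather than only in the interior, and checking that each step respects the submanifold structure of $\mathrm{Diff}^{s+1}_{C_{0}}$ inside $\mathrm{Diff}^{s+1}$. Both can be controlled because boundary elliptic estimates are available at regularity $s > n/2 + 4$ and because the boundary conformal constraint is a closed condition already encoded in the topology of $\mathscr{M}^{s}_{C}$. The statement for $\phi^{0}_{\sigma}$ into $\mathscr{M}^{s}_{C^{(1)}_{0}}$ follows by the same scheme, with the caveat that one must additionally monitor first boundary jets; this is automatic from $C^{1}$-convergence up to $\partial M$ together with the mean-curvature-type constraint built into $\mathscr{M}^{s}_{C^{(1)}_{0}}$.
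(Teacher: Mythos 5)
Your proposal is correct and follows essentially the same route as the paper, which gives no details beyond invoking Ebin's Proposition~6.13: extract a $C^{0}$-limit of the $\eta_{n}$ by Arzel\`a--Ascoli using the uniform equivalence of the metrics $\eta_{n}^{*}\gamma$, upgrade its regularity to $H^{s+1}$, and check that the boundary conformal condition survives the limit. The only cosmetic difference is that you bootstrap via an elliptic system in harmonic coordinates, whereas Ebin recovers the regularity from the fact that an isometry is determined by its $1$-jet at a single point via the exponential maps (which is where the connectedness of $M$ enters, as the paper's remark after the lemma records); both mechanisms are standard and suffice at the stated regularity.
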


\begin{rema}
The connectedness of $M$ was used in the proof of lemma~\ref{lemm2.17}
(see the proof of Proposition 6.13 in \cite{ebin1970manifold}).
\end{rema}

\begin{proof}[Proof of the Theorem~\ref{theo2.1}]

The proof is same as in \cite{ebin1970manifold}.
Because we can show in the same way for $\mathscr{M}_{C^{(1)}_{0}}$, we only descrie about $\mathscr{M}_{C}$.

For $\gamma \in \mathscr{M}_{C}$ , we set $O^{s}_{\gamma} := \phi_{\gamma} \bigl( \mathrm{Diff}^{s+1}_{C_{0}} / I_{\gamma} \bigr)$ the orbit of the action $A$ through $\gamma$
(where $\phi_{\gamma}$ is in the lemma~\ref{lemm2.17}).
From Lemma~\ref{lemm2.17}, this is a closed submanifold of $\mathscr{M}^{s}_{C}.$
Moreover, we define its normal vector bundle:
$$\nu := \bigl\{ V \in T\mathscr{M}^{s}_{C}|_{O^{s}_{\gamma}} \bigm| (W,V)_{\gamma} = 0~,~\forall W \in TO^{s}_{\gamma} \bigr\},$$ 
where  
$(*,*)_{\gamma} := \bigintss_{M} \langle *,* \rangle_{\gamma}dv_{\gamma}.$

Our first step is constructing the normal bundle $\nu$ of $O^{s}_{\gamma}$ in $\mathscr{M}^{s}_{C}.$
As stated in \cite{ebin1970manifold}, this Riemannian metric is strong on $H^{0},$ but is not on $H^{s}~(s \ge 1).$
Thus we do not know automatically that $\nu$ is a $C^{\infty}$ subbundle of $T\mathscr{M}^{s}_{C} |_{O^{s}_{\gamma}}.$

To show this, we shall find a $C^{\infty}$ surjective vector-bundle-map: 
\[
P~:~T\mathscr{M}^{s}_{C} |_{O^{s}_{\gamma}} \longrightarrow TO^{s}_{\gamma}
\]
such that $\mathrm{Ker}~P = \nu$
(see {\cite[Chapter3,~Section3]{lang}}).

Since, from the proof of the lemma~\ref{lemm2.16},
\[
T_{\gamma}\mathscr{M}^{s}_{C} = \mathrm{Im}~\alpha \oplus \mathrm{Ker}~\alpha^{*}.
\]
Hence, from the definition of $O^{s}_{\gamma},$
\[
\mathrm{Im}~\alpha = T_{\gamma}O^{s}_{\gamma}.
\]
Thus
\[
\nu_{\gamma}(O^{s}_{\gamma}) = \mathrm{Ker} \bigl( \alpha^{*} |_{T_{\gamma}\mathscr{M}^{s}_{C}} \bigr).
\]

Moreover, since the weak Riemannian metric  $(~,~)_{\gamma}$ is invariant under the action of $\mathrm{Diff}^{s+1}_{C}$ ({\cite[Section~4]{ebin1970manifold}}),
$\nu_{\eta^{*}\gamma}(O^{s}_{\gamma}) = \eta^{*} \bigl( \mathrm{Ker}~\alpha^{*} \bigr).$
On the orbit of $\gamma,$ we define 
\[
P := \alpha \circ (\alpha^{*} \circ \alpha)^{-1} \circ \alpha^{*}~:~T_{\gamma}\mathscr{M}^{s}_{C} |_{O^{s}_{\gamma}} \longrightarrow T_{\gamma}O^{s}_{\gamma}
\]
Thus, as in the same way in {\cite[Theorem~7.1]{ebin1970manifold}}, we can show that this $P$ satisfy the above properties.

Next, we shall construct the slice $\mathcal{S}$ of $\gamma.$
To do this, we consider the exponential map of $(~,~)_{\gamma},$ $\exp:~T\mathscr{M}^{s}_{C} \longrightarrow \mathscr{M}^{s}_{C}.$
Thus we know the following fact:

\begin{fact}[{\cite[Section~4]{ebin1970manifold}}]
This is a smooth map and  $exp |_{\nu}:~\nu \longrightarrow \mathscr{M}^{s}_{C}$ is a diffeomorphism mapping a neighborhood of the zero section of $\nu$
to a neighborhood of $O^{s}_{\gamma}$ in $\mathscr{M}^{s}_{C}.$
Moreover, since $A$ is continuous and $exp$ and the action of $\eta$ are commutative, 
there are a neighborhood $U$ of $\gamma$ in $O^{s}_{\gamma}$ and a neighborhood $V$ of $0$ in $\nu_{\gamma}$ such that 
\[
\nu \supset W := \bigl\{ \eta^{*}(v) \bigm| v \in V~,~\eta \in \chi(U) \bigr\}.
\]
Then $exp |_{W}$ is  a diffeomorphism mapping $W$ onto a neighborhood of $\gamma.$
Moreover, if necessary, we shall take $U$ and $V$ small enough so that $\exp (W) \cap O^{s}_{\gamma} = U.$
\end{fact}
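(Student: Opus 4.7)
The claim is an equivariant tubular neighborhood theorem for the orbit $O^s_\gamma$ in the Hilbert manifold $\mathscr{M}^s_C$, with the equivariance packaged via the local section $\chi$ from Theorem~\ref{theo2.1}. My plan has three parts: (a) establish smoothness of $\exp$, (b) apply the Hilbert-manifold inverse function theorem to $\exp|_\nu$ at zero vectors, and (c) use the $\mathrm{Diff}^{s+1}_{C_0}$-equivariance of $\exp$ together with $\chi$ to propagate along the orbit and to describe $W$.

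For (a), the weak metric $(h,k)_\gamma = \int_M \langle h,k\rangle_\gamma\, dv_\gamma$ has Christoffel symbols that are pointwise algebraic in $g$ and the tangent vectors, so on $\mathscr{M}^s$ the geodesic equation is a fiberwise ODE on sections of $S^2T^*M$. This produces a smooth spray whose flow defines a smooth $\exp$, exactly as in Ebin, Section~4. To descend to the submanifold $\mathscr{M}^s_C$, I would verify that the pointwise geodesic preserves the algebraic boundary condition $g|_{\partial M} \in C|_{\partial M}$ together with tangential initial data $h|_{\partial M} = \rho\, g|_{\partial M}$; this reduces to a uniqueness statement for the fiberwise ODE restricted to $\partial M$. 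If preservation of the submanifold is not automatic, one instead composes Ebin's spray with a smooth retraction onto $\mathscr{M}^s_C$.

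For (b), at each $g \in O^s_\gamma$ the tangent space to the total space of $\nu$ at $0_g$ splits canonically as $T_g O^s_\gamma \oplus \nu_g$, which by Lemma~\ref{lemm3.15} (and the orthogonal decomposition of $T_\gamma\mathscr{M}^s_C$ derived from it) equals $T_g\mathscr{M}^s_C$. The derivative $D_{0_g}(\exp|_\nu)$ is the identity on each factor (the horizontal part is the tautological inclusion, and $D\exp$ at a zero vector is the identity on the vertical part), so $\exp|_\nu$ is a local diffeomorphism at every zero vector by the inverse function theorem. Injectivity on a neighborhood of the entire zero section uses equivariance: since the weak metric is $\mathrm{Diff}^{s+1}_{C_0}$-invariant by Ebin, Section~4, the action preserves $\nu$ via $\eta^*(\nu_g) = \nu_{\eta^* g}$ and commutes with $\exp$, so a tubular radius chosen at $\gamma$ transports consistently along the whole orbit.

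For (c), pick $V \subset \nu_\gamma$ small enough that $\exp|_V$ is an embedding and $\exp(V) \cap O^s_\gamma = \{\gamma\}$; this is possible because $\nu_\gamma \cap T_\gamma O^s_\gamma = \{0\}$ and $\exp$ is a local diffeomorphism at $0_\gamma$. Let $U$ be a neighborhood of $\gamma$ in $O^s_\gamma$ on which $\chi$ is defined (via the identification of $O^s_\gamma$ with $\mathrm{Diff}^{s+1}_{C_0}/I_\gamma$ given by $\phi_\gamma$ of Lemma~\ref{lemm2.17}). Then $W = \{\chi(u)^* v : u \in U,\, v \in V\}$ is contained in $\nu$ by invariance of the orthogonal decomposition and is open by continuity of the action and of $\chi$; the identity $\exp(\chi(u)^* v) = \chi(u)^*\exp(v)$ realizes $\exp|_W$ as the composition of the diffeomorphisms $(u,v)\mapsto (\chi(u),\exp(v))\mapsto \chi(u)^*\exp(v)$, hence is a diffeomorphism onto a neighborhood of $\gamma$. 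Finally, if $\exp(\chi(u)^* v) \in O^s_\gamma$, then $\exp(v) \in O^s_\gamma$ since $\chi(u)^*$ preserves the orbit, forcing $v = 0$ by the choice of $V$; this gives $\exp(W) \cap O^s_\gamma = U$. The main obstacle is the injectivity in (b): the orbit is not locally compact, so equivariance is essential to promote the pointwise local diffeomorphism at $0_\gamma$ into a genuine tubular neighborhood along all of $O^s_\gamma$.
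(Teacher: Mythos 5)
Your overall route is exactly the one the paper implicitly relies on (the paper gives no proof of this Fact, deferring wholesale to Ebin's Section~4): smoothness of $\exp$ from the pointwise character of the weak $L^{2}$ metric, the inverse function theorem at zero vectors of $\nu$, invariance of the weak metric under the $\mathrm{Diff}^{s+1}_{C_{0}}$-action, and the factorization $\exp\bigl(\chi(u)^{*}v\bigr)=\chi(u)^{*}\exp(v)$ to realize $\exp|_{W}$ as a composition of diffeomorphisms. One warning on part (a): your fallback of composing Ebin's spray with a smooth retraction onto $\mathscr{M}^{s}_{C}$ would be self-defeating, because the retracted map would in general no longer commute with the pullback action, and that commutation is precisely what parts (b) and (c) consume. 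Fortunately the fallback is unneeded: the geodesics of the weak metric are computed pointwise, and at each boundary point the conformal ray $\{c\,g_{0}(x)\mid c>0\}$ is the flat factor in the pointwise product decomposition of the space of inner products, hence totally geodesic; so initial data tangent to $\mathscr{M}^{s}_{C}$ (respectively, satisfying the $1$-jet condition for $\mathscr{M}^{s}_{C^{1}_{0}}$) produces geodesics that remain in the submanifold. Run the primary verification and delete the fallback.

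There is, however, a genuine gap in part (c). You justify the choice of $V$ with $\exp(V)\cap O^{s}_{\gamma}=\{\gamma\}$ solely by $\nu_{\gamma}\cap T_{\gamma}O^{s}_{\gamma}=\{0\}$ together with $\exp$ being a local diffeomorphism at $0_{\gamma}$. That argument only excludes intersection points \emph{near} $\gamma$: since the orbit is not locally compact, nothing in it prevents $O^{s}_{\gamma}$ from re-entering $\exp(V)$ at points far along the orbit, and then your final equivariance argument would yield $\exp(W)\cap O^{s}_{\gamma}\supsetneq U$. The missing ingredient is Lemma~\ref{lemm2.17} (Ebin's Proposition~6.13): $\phi_{\gamma}$ is a homeomorphism onto a \emph{closed} subset of $\mathscr{M}^{s}_{C}$, so the complement in $O^{s}_{\gamma}$ of a small orbit-neighborhood of $\gamma$ is closed and does not contain $\gamma$, and one may shrink $V$ so that $\exp(V)$ avoids it --- this is the role of the strong metric $\rho_{s}$ and the balls $B^{\delta}_{\gamma}$, $B^{2\delta}_{\gamma}$ appearing immediately after the Fact in the paper, and it is also where the connectedness of $M$ enters (see the paper's remark following Lemma~\ref{lemm2.17}). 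With that supplement, your derivation of $\exp(W)\cap O^{s}_{\gamma}=U$ from orbit-invariance of $\chi(u)^{*}$ is correct.
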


Consider the strong inner product $(~,~)_{\gamma}^{s}$ on $H^{s}(S^{2}T^{*}),$ defined as at the end of Section~4 in \cite{ebin1970manifold}.
Now let $\rho_{s}$ be the metric defined on $\mathscr{M}^{s}_{C}$ by $(~,~)^{s}_{\gamma}.$
Let $B^{r}_{\gamma}$ be the open ball about $\gamma$ of radius $r$ with respect to $\rho_{s}.$
Then, for some positive $\delta,$ $\exp (W) \supset B^{2 \delta}_{\gamma}.$
Pick $U_{1} \subset U,~\epsilon_{1} < \epsilon~(\leadsto V_{1} \subset V)$ so that if
$W_{1} := \bigl\{ \eta^{*}(v) \bigm| v \in V_{1}~,~\eta \in \chi \bigl( U_{1} \bigr) \bigr\},$
then $\exp (W_{1}) \subset B^{\delta}_{\gamma}.$

Then we set
\[
\mathcal{S} := \exp (V_{1})
\]
and this $\mathcal{S}$ has the three properties of a slice
(These are checked in the same way in {\cite[Section~7~and~the~proof~of~Theorem~7.1]{ebin1970manifold}}).

\end{proof}

\section{Main Results}

~~Before starting the proof of Main theorem, we shall line up some basic definitions below:

\begin{defi}[\cite{omori1970group}]

(1)~A topological space $E$ is called {\it ILH-space}
if $E$ is an inverse limit of Hilbert spaces $\{ E_{i} \}_{i \in \mathbb{Z}_{\ge 1}},$ such that 
$E_{j} \subset E_{i}~(i \le j)$ and each inclusions are bounded linear operators. 

(2)~A topological space $X$ is called {\it $C^{k}$-ILH-manifold modeled on $E$} 
if $X$ has the following (a) and (b):

(a)~$X$ is an inverse limit of $C^{k}$-Hilbert manifolds $\{ X_{i} \}_{i \in \mathbb{Z}_{\ge 1}}$ modeled on
$E_{i}$ such that $X_{j} \subset X_{i}~(i \le j),$

(b)~For each $x \in X$ and $i,$ there is an open neighborhood $X_{i} \supset U_{i}(x)$ and homeomorphism $\psi_{i}$
from $U_{i}(x)$ onto an open subset $V_{i}$ in $E_{i}$ which gives a $C^{k}$-coordinate around $x$ in $X_{i}$ and satisfies
$U_{j}(x) \subset U_{i}(x)~(i \le j),~\psi_{i+1}(y) = \psi_{i}(y)$ for all $y \in U_{i+1}(x).$

(3)~Let $X$~be a~$C^{k}$-ILH-manifold( $k \ge 1$ ) and $TX_{i}$ the tangent budle of $X_{i}.$ The inverse limit of $\{ TX_{i} \}$ is called 
{\it ILH-tangent bundle of $X$}.

(4)~Let $X,Y$~be~$C^{k}$-ILH-manifolds.
A mapping ~$\phi:~X \rightarrow Y$ is called {\it $C^{l}$-ILH-differentiable~( $l \le k$ )}
if $\phi$ is an inverse limit of $C^{l}$-differentiable maping $\{ \phi_{i} \}_{i \in \mathbb{Z}_{\ge 1}}$ 
(that is, for each $i,$ there exists 
$j(i)$ and $C^{l}$-map $\phi_{i}:X_{j(i)} \rightarrow Y_{i}$ such that 
$\phi_{i}(x) = \phi_{i+1}(x)$ for all $x \in X_{j(i+1)}.)$

(5)~$X$ is a {\it ILH-manifold}
if $X$ is a $C^{k}$-ILH-manifold for all $k \ge 0.$

(6)~Let $X,Y$~be~$\mathrm{ILH}$-manifolds.
A mapping $\phi:X \rightarrow Y$ is called {\it ILH-differentiable}
if
$\phi:X \rightarrow Y$ is $C^{k}$-ILH-differentiable for all $k \ge 0.$

(7)~Let $T_{x}X_{i}$ be the tangent space of $X_{i}$ at $x$ and $T_{x}X$ the inverse limit of $\{ T_{x}X_{i} \}.$
Let
\[
D^{r}\phi_{i}(x):~\prod^{r}_{l = 1} T_{x}X_{j(i)} \longrightarrow T_{\phi(x)}Y_{i}
\]
be the $r$-th (Fr\'echet) derivative of $\phi_{i}$ at $x.$
Then,
$\{ D^{r}\phi_{i}(x) \}_{i \in \mathbb{Z}_{\ge 1}}$ has the inverse limit
\[
\lim_{\leftarrow} D^{r}\phi_{i}(x):~\prod^{r}_{l = 1} T_{x}X \longrightarrow T_{\phi(x)}Y.
\]
It is called {\it $r$-th derivative of $\phi$} and we denote it by $D^{r}\phi(x).$

\end{defi}

Let $M$ and $g_{0}$ be the same as in Section 2 and use the same notations there.
As in the closed case, 
$\mathscr{M} := \lim_{\leftarrow} \mathscr{M}^{r}$~,~$\mathrm{Diff}(M) := \lim_{\leftarrow} \mathrm{Diff}^{r}$
naturally become ILH-manifolds and the pullback-action $A:~\mathrm{Diff}(M) \times \mathscr{M} \longrightarrow \mathscr{M}$
is ILH-differentiable.
Moreover, for a fixed metric $g_{0}$ on $M$, 
since each $\mathscr{M}^{r}_{C}$ are a submanifold of $\mathscr{M}^{r-1}$,
$\mathscr{M}_{C} := \lim_{\leftarrow} \mathscr{M}^{r}_{C}$ is an ILH-submanifold of $\mathscr{M}$
and the inclusion $\mathscr{M}_{C} \hookrightarrow \mathscr{M}$ is $C^{\infty}$-differentiable.
And, $\mathrm{Diff}_{C_{0}} := \lim_{\leftarrow} \mathrm{Diff}^{r}_{C_{0}}$ is an ILH-submanifold of $\mathrm{Diff}(M)$ and
the inclusion $\mathrm{Diff}_{C_{0}} \hookrightarrow \mathrm{Diff}(M)$ is $C^{\infty}$-differentiable.
Similarly, $\mathscr{M}_{C^{(1)}_{0}} := \lim_{\leftarrow} \mathscr{M}^{r}_{C^{(1)}_{0}}$ is an ILH-submanifold of $\mathscr{M}$
and the inclusion $\mathscr{M}_{C^{(1)}_{0}} \hookrightarrow \mathscr{M}$ is $C^{\infty}$ -differenciable.
Note that the pull-back action $A:\mathrm{Diff}_{C_{0}} \times \mathscr{M}_{C^{(1)}_{0}} \rightarrow \mathscr{M}_{C^{(1)}_{0}}$ 
is also $C^{\infty}$-differentiable.
By the Sobolev embedding for fibre bundles over manifold with boundary, we obtain the following
(see \cite{aubin2013some}):

\begin{lemm}
\label{lemm3.1}

Let $E,~F$ be vector bundles over $M$ and let $f:~E \rightarrow F$ be a $C^{\infty}$-differentiable which preserves each fibers.

Let $s > \frac{n}{2}$ .Then the bundle map induced by $f$
\[
\phi:~H^{s}(E) \rightarrow H^{s}(F)~;~\phi(\alpha) := f \circ \alpha
\]
is $C^{\infty}$-differentiable.

\end{lemm}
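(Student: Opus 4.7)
The plan is to reduce the global statement to a local, Euclidean Nemytskii-type statement and then invoke the Banach-algebra property of $H^{s}$ for $s > n/2$. First I would cover $M$ by a finite atlas adapted to $\partial M$ (interior charts modelled on open balls and boundary charts modelled on half-balls in $\mathbb{R}^{n}$) together with compatible trivializations of $E$ and $F$. Using a smooth partition of unity subordinate to this cover, and the fact that $H^{s}(E)$ and $H^{s}(F)$ may be identified with closed subspaces of finite direct sums of $H^{s}(D_{n},\mathbb{R}^{\bullet})$ (as used in the excerpt via \cite{palais1968foundations}), the smoothness of $\phi$ is reduced to smoothness of local Nemytskii operators of the form
\[
\Phi : H^{s}(U,\mathbb{R}^{k}) \longrightarrow H^{s}(U,\mathbb{R}^{m}),\quad \Phi(\alpha)(x) := F(x,\alpha(x)),
\]
where $U$ is a ball or half-ball and $F$ is $C^{\infty}$ and fibre-preserving. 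On boundary charts I would compose with a universal bounded $H^{s}$-extension operator $\mathcal{E} : H^{s}(U) \to H^{s}(\mathbb{R}^{n})$ and extend $F$ smoothly to $\mathbb{R}^{n} \times \mathbb{R}^{k}$, so that $\Phi$ factors as restriction $\circ\, \tilde{\Phi} \circ \mathcal{E}$; it then suffices to prove smoothness of $\tilde{\Phi}$ on all of $\mathbb{R}^{n}$.

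Next I would exploit the two standard properties of $H^{s}$ for $s > n/2$: the Sobolev embedding $H^{s} \hookrightarrow C^{0}$, which makes composition pointwise well-defined and bounded, and the ring property $\|uv\|_{s} \le C \|u\|_{s}\|v\|_{s}$. Applying Taylor's theorem with remainder to $\tilde{F}$ in the fibre variable yields, for each integer $r \ge 0$,
\[
\tilde{\Phi}(\alpha + h) = \sum_{k = 0}^{r} \tfrac{1}{k!}\, (\partial_{y}^{k} \tilde{F})(\cdot,\alpha) \cdot h^{\otimes k} + R_{r}(\alpha,h),
\]
and the ring property controls both each polynomial term and the remainder in $H^{s}$-norm. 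This identifies natural candidates for the Fr\'echet derivatives: $D^{r}\tilde{\Phi}(\alpha)[h_{1},\dots,h_{r}] = (\partial_{y}^{r} \tilde{F})(\cdot,\alpha) \cdot h_{1} \otimes \cdots \otimes h_{r}$, viewed as a continuous $r$-multilinear form $H^{s} \times \cdots \times H^{s} \to H^{s}$.

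The main technical point, and where I expect the work to lie, is showing that each coefficient map $\alpha \mapsto (\partial_{y}^{k} \tilde{F})(\cdot, \alpha)$ is itself continuous from $H^{s}$ into $H^{s}$. This is a Moser-type estimate: one differentiates $(\partial_{y}^{k} \tilde{F})(\cdot, \alpha)$ up to order $s$ via the chain and Leibniz rules, uses $H^{s} \hookrightarrow C^{0}$ to bound the $C^{0}$-norm of $\alpha$ and thereby the $C^{0}$-norms of the mixed partial derivatives of $\tilde{F}$ evaluated at $\alpha$, and applies the ring property at every step to estimate the products of those factors against derivatives of $\alpha$. Once this continuity is in hand, an induction on $r$ shows that the Taylor expansion above indeed produces the $r$-th Fr\'echet derivative of $\tilde{\Phi}$ and that $D^{r}\tilde{\Phi}$ depends continuously on $\alpha$, establishing $C^{\infty}$-differentiability. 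Reassembling via the partition of unity and undoing the extensions gives the global conclusion for $\phi : H^{s}(E) \to H^{s}(F)$.
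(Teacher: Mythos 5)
Your argument is correct and is essentially the proof the paper relies on: the paper simply cites Koiso's Lemma~1.1 and Palais's Theorem~11.3 (the standard ``$\omega$-lemma''), whose proof is exactly your reduction to local Nemytskii operators via charts and a partition of unity, followed by the Sobolev embedding $H^{s}\hookrightarrow C^{0}$, the Banach-algebra property of $H^{s}$ for $s>n/2$, Taylor expansion in the fibre variable, and Moser-type estimates for the coefficient maps. No substantive difference from the paper's (cited) route.
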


\begin{proof}
Same as Lemma 1.1 in \cite{koiso1979decomposition}. See also {\cite[Theorem~11.3]{palais1968foundations}}.
\end{proof}

Using this lemma and that $\mathscr{M}^{r}_{C}$ is a submanifold of $\mathscr{M}$, 
the following hold in the same as in \cite{koiso1979decomposition}:

\begin{lemm}[{\cite[Proposition1.2~,~Corollary1.3~,~Corollary1.4]{koiso1979decomposition}}]
\label{lemm3.2}

Let $s > \frac{n}{2}$ .

(1)~ 
$D:~\mathscr{M}^{s+1}_{C} \times H^{s+1}(T^{p}_{q}) \rightarrow H^{s}(T^{p}_{q+1})~;~(g,\xi) \mapsto \nabla_{g} \xi$

is $C^{\infty}$ -differentiable, where $T^{p}_{q}$ is the type $(p,q)$ tensor bundle and $\nabla_{g}$ is the Levi-Civita connection with respect to $g$ .

(2)~
$\mathscr{M}^{s+1}_{C} \times H^{s+2}(M) \rightarrow H^{s}(M)~;~(g,f) \mapsto \Delta_{g} f := \nabla_{g} d f.$

is $C^{\infty}$ -differentiable.

(3)~Mappings listed below are $C^{\infty}$ -differentiable:
\[
\mathscr{M}^{s+2}_{C} \rightarrow H^{s}(S^{2}T^{*})~;~g \mapsto {\rm Ric_{g}}~(\mathrm{the~Ricci~curvature~of}~g).
\]

\[
\mathscr{M}^{s+2}_{C} \rightarrow H^{s}(M)~;~g \mapsto R_{g}~(\mathrm{the~scalar~curvature~of}~g).
\]

\[
\mathscr{M}^{s+2}_{C} \rightarrow H^{s + 1/2}(\partial M)~;~g \mapsto H_{g}~(\mathrm{the~mean~curvature~of}~g~\mathrm{along}~\partial M).
\]
\end{lemm}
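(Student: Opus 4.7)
The plan is to reduce every assertion to a composition of three kinds of operations, each of which is manifestly $C^{\infty}$ at the Sobolev indices at hand: (a) bounded linear differentiation operators $H^{s+k} \to H^{s+k-1}$; (b) fiber-preserving pointwise smooth bundle maps, which are $C^{\infty}$ between the corresponding Sobolev-section spaces by Lemma~\ref{lemm3.1}; and (c) pointwise multiplication/contraction of tensors, which is continuous bilinear (hence $C^{\infty}$) $H^{s_{1}} \times H^{s_{2}} \to H^{\min(s_{1},s_{2})}$ as soon as $\min(s_{1},s_{2}) > n/2$. For the mean-curvature item we additionally invoke the continuous trace operator $H^{s+1}(M) \to H^{s+1/2}(\partial M)$. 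Throughout, matrix inversion $g \mapsto g^{-1}$ is a smooth fiber-preserving map on the open cone of positive-definite symmetric matrices, and therefore yields a $C^{\infty}$ map $\mathscr{M}^{r}_{C} \to H^{r}(S^{2}TM)$ for every $r > n/2$ by Lemma~\ref{lemm3.1}.

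For (1), in local coordinates $(\nabla_{g}\xi)^{\bullet}_{\bullet\,;k} = \partial_{k}\xi + \Gamma(g) \star \xi$, where $\star$ denotes a pointwise contraction and $\Gamma^{a}_{kl}(g) = \tfrac{1}{2} g^{am}(\partial_{k}g_{ml} + \partial_{l}g_{mk} - \partial_{m}g_{kl})$ is polynomial in $g^{-1}$ and $\partial g$. From $(g,\xi) \in \mathscr{M}^{s+1}_{C} \times H^{s+1}(T^{p}_{q})$ we then obtain $g^{-1} \in H^{s+1}$, $\partial g \in H^{s}$, $\Gamma(g) \in H^{s}$, $\partial \xi \in H^{s}$, and since $s > n/2$ all these products land in $H^{s}$ with smooth dependence. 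For (2) the coordinate expression $\Delta_{g}f = g^{ij}\bigl(\partial_{i}\partial_{j}f - \Gamma^{k}_{ij}(g)\,\partial_{k}f\bigr)$ is handled identically: starting from $g \in \mathscr{M}^{s+1}_{C}$ and $f \in H^{s+2}(M)$ we get $\partial^{2}f \in H^{s}$ and $\Gamma(g)\,\partial f \in H^{s}$, so $\Delta_{g}f \in H^{s}(M)$ with $C^{\infty}$ dependence on $(g,f)$.

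For the interior parts of (3), the same jet-polynomial calculation applies: $\mathrm{Ric}_{g}$ and $R_{g}$ are pointwise polynomial in $g^{-1}$, $\partial g$, and $\partial^{2}g$, so from $g \in \mathscr{M}^{s+2}_{C}$ one reads off $g^{-1} \in H^{s+2}$, $\partial g \in H^{s+1}$, $\partial^{2}g \in H^{s}$, and all products lie smoothly in $H^{s}$. The genuinely new ingredient, relative to Koiso's closed-manifold argument, is the mean-curvature map $g \mapsto H_{g} \in H^{s+1/2}(\partial M)$. Working in a collar neighborhood of $\partial M$ with Fermi coordinates for the background $g_{0}$, the mean curvature is a polynomial in $g^{-1}|_{\partial M}$ and the first derivatives of $g$ restricted to $\partial M$. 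Composing with the continuous trace $H^{s+1}(M) \to H^{s+1/2}(\partial M)$ of \cite{aubin2013some}, we obtain $g|_{\partial M} \in H^{s+3/2}(\partial M)$ and $(\partial g)|_{\partial M} \in H^{s+1/2}(\partial M)$; since $s + 1/2 > (n-1)/2$, the boundary Sobolev space $H^{s+1/2}(\partial M)$ is a Banach algebra under pointwise multiplication, and so the polynomial expression defining $H_{g}$ depends $C^{\infty}$ on $g$.

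The main (indeed the only) obstacle is this mean-curvature statement: it is the unique place where boundary analysis genuinely enters, and one has to keep track of the trace theorem at the right Sobolev index so that after restriction to $\partial M$ there is still enough regularity left for the boundary multiplication theorem to close. Apart from this point, the entire lemma is a mechanical reduction to Lemma~\ref{lemm3.1} together with Sobolev multiplication, in direct parallel to \cite[Proposition~1.2, Corollary~1.3, Corollary~1.4]{koiso1979decomposition}.
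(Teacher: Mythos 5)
Your argument is correct and is essentially the proof the paper intends: the paper simply defers to Koiso's Proposition 1.2 and Corollaries 1.3--1.4 together with Lemma~\ref{lemm3.1} (composition with fiber-preserving smooth maps, including $g\mapsto g^{-1}$ on the open cone of positive-definite forms) and Sobolev multiplication, which is exactly your reduction to jet-polynomial expressions, and the mean-curvature case is handled, as you do, by inserting the trace operator $H^{s+1}(M)\to H^{s+1/2}(\partial M)$ before applying the boundary multiplication theorem. No substantive difference in route.
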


\begin{rema}
Since $\mathscr{M}_{C_{0}}$ is an ILH submanifold of $\mathscr{M}$, the same statements hold in the above lemma replaced 
$\mathscr{M}_{C}$ by $\mathscr{M}_{C^{(1)}_{0}}$.
\end{rema}

Let $r > \frac{n}{2} + 4$.
We define

$\mathscr{M}^{r}_{C,1} := \bigl\{ g \in \mathscr{M}^{r}_{C} \bigm| {\rm Vol}_{g}(M) = 1 \bigr\}$,
~$\mathscr{M}_{C,1} := \bigcap_{r} \mathscr{M}^{r}_{C,1}$,

$\mathfrak{S}^{r}_{C^{(1)}_{0}} := \bigl\{ g \in \mathscr{M}^{r}_{C,1} \cap \mathscr{M}^{r}_{C^{(1)}_{0}} \bigm| R_{g} = {\rm const} \bigr\}$,
~$\mathfrak{S}_{C^{(1)}_{0}} := \bigcap_{r} \mathfrak{S}^{r}_{C^{(1)}_{0}}$,

$\check{\mathfrak{S}}_{C^{(1)}_{0}} := \Biggl\{ g \in \mathfrak{S}_{C^{(1)}_{0}} \Biggm| \frac{R_{g}}{n-1} \notin \mathrm{Spec} (-\Delta_{g} ; \mathrm{Neumann})
\Biggr\}$.

\noindent
For $\bar{g},g \in \mathscr{M}^{r}_{C,1},$ we define

\[
\Phi^{r}_{g~(\bar{g})}:~H^{r}_{\bar{g}}(M) \longrightarrow \langle (1,-1) \rangle^{\perp_{\bar{g}}} \oplus H^{r-\frac{3}{2}}(\partial M)
\]
by
\footnotesize
\begin{equation*}
\begin{split}
f &\mapsto \Phi^{r}_{g~(\bar{g})}(f) := \Biggl( (n-1)(\Delta_{g})^{2} f - R_{g} -\Delta_{g} f - \int_{M} \bigl\{ (n-1)(\Delta_{g})^{2} f + R_{g} \Delta_{g} f \bigr\}dv_{\bar{g}} \\
&\quad+ \int_{\partial M} \nu_{g} \{ -(n-1)\Delta_{g} f - R_{g} f \}ds_{\bar{g} |_{\partial M}}
,~\nu_{g} \{ -(n-1)\Delta_{g} f - R_{g} f \} \bigm|_{\partial M},
~ \nu_{g} (f) \bigm|_{\partial M} \Biggr),
\end{split}
\end{equation*}
\normalsize
where $H^{r}(TM),H^{r}(T \partial M)$ are defined by fixed $\bar{g},$
$\nu_{g}$ is the outer unit normal vector along $\partial M$ with respect to $g,$ $H^{r}_{\bar{g}}(M) := \bigl\{ f \in H^{r}(M) \bigm| \int_{M} f~dv_{\bar{g}} = 0 \bigr\},$
and

$\langle (1,-1) \rangle^{\perp_{\bar{g}}} := \bigl\{ (u,v) \in H^{r-4}(M) \oplus H^{r-\frac{5}{2}}(\partial M) \bigm| \bigl( (u,v),(1,-1) \bigr)_{L^{2}(\bar{g})} = 0 \bigr\}.$
Here, $ds_{\bar{g} |_{\partial M}}$ also denotes the volume measure with respect to $\bar{g}|_{\partial M}.$

From
Lemma \ref{lemm3.2} and the trace theorem({\cite[Appendix B]{hormander1985analysis}}), $(g,f) \mapsto \Phi^{r}_{g}(f)$
is a $C^{\infty}$ -differentiable map from $\mathscr{M}^{r}_{C,1} \times H^{r}_{\bar{g}}(M)$
to
$H^{r-4}_{g_{0}}(M) \oplus H^{r-\frac{5}{2}}(\partial M) \oplus H^{r-\frac{3}{2}}(\partial M).$

And we define
\[
\mathcal{K}^{r}_{C} := \bigl\{ g \in \mathscr{M}^{r}_{C,1} \bigm| \exists \bar{g} \in \mathscr{M}_{C,1}~~{\rm s.t.}~~\Phi^{r}_{g~(\bar{g})}~\mathrm{is~an~isomorphism} \bigr\}.
\] 
Then, the following holds:

\begin{lemm}
\label{lemm3.3}
$\mathcal{K}^{r}_{C}$ is an open subset of $\mathscr{M}^{r}_{C,1}.$
\end{lemm}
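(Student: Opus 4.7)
The plan is to reduce openness of $\mathcal{K}^{r}_{C}$ to the standard fact that the set of invertible bounded linear operators between two Banach spaces is open with respect to the operator norm, applied to the operator-valued map $g \mapsto \Phi^{r}_{g\,(\bar{g})}$ obtained by holding the auxiliary metric $\bar{g}$ fixed.

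First I would fix a point $g_{0} \in \mathcal{K}^{r}_{C}$ and, by the definition of $\mathcal{K}^{r}_{C}$, pick some $\bar{g} \in \mathscr{M}_{C,1}$ such that
\[
\Phi^{r}_{g_{0}\,(\bar{g})}~:~H^{r}_{\bar{g}}(M) \longrightarrow \langle (1,-1) \rangle^{\perp_{\bar{g}}} \oplus H^{r-\frac{3}{2}}(\partial M)
\]
is an isomorphism. The target and source are Hilbert spaces that depend only on $\bar{g}$ (not on $g$), which is the point of allowing a separate reference metric in the definition.

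Next I would invoke Lemma~\ref{lemm3.2} together with the trace theorem to note that the evaluation
\[
\mathscr{M}^{r}_{C,1} \times H^{r}_{\bar{g}}(M) \longrightarrow \langle (1,-1) \rangle^{\perp_{\bar{g}}} \oplus H^{r-\frac{3}{2}}(\partial M),~~(g,f) \longmapsto \Phi^{r}_{g\,(\bar{g})}(f)
\]
is $C^{\infty}$ (this is precisely the differentiability remark already made right before the definition of $\mathcal{K}^{r}_{C}$). Since the assignment is linear in $f$ for each fixed $g$, this joint smoothness upgrades to smoothness of the operator-valued map
\[
\mathscr{M}^{r}_{C,1} \longrightarrow \mathcal{L}\bigl( H^{r}_{\bar{g}}(M),~\langle (1,-1) \rangle^{\perp_{\bar{g}}} \oplus H^{r-\frac{3}{2}}(\partial M) \bigr),~~g \longmapsto \Phi^{r}_{g\,(\bar{g})},
\]
in particular to continuity with respect to the operator norm.

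Finally, since the set of topological isomorphisms between two Banach spaces is open in $\mathcal{L}$ (via the standard Neumann series argument: if $T_{0}$ is invertible and $\|T - T_{0}\|_{\mathrm{op}} < \|T_{0}^{-1}\|^{-1}$, then $T$ is invertible), the continuity of $g \mapsto \Phi^{r}_{g\,(\bar{g})}$ at $g_{0}$ yields a neighborhood $U$ of $g_{0}$ in $\mathscr{M}^{r}_{C,1}$ on which $\Phi^{r}_{g\,(\bar{g})}$ remains an isomorphism; the same $\bar{g}$ then witnesses $g \in \mathcal{K}^{r}_{C}$ for every $g \in U$, proving openness. The only step that needs care is the passage from joint $C^{\infty}$-differentiability of $(g,f) \mapsto \Phi^{r}_{g\,(\bar{g})}(f)$ to continuity of $g \mapsto \Phi^{r}_{g\,(\bar{g})}$ in the operator norm on $\mathcal{L}$; this follows because for a bilinear-type dependence (linear in $f$, smooth in $g$) one can bound $\|\Phi^{r}_{g\,(\bar{g})} - \Phi^{r}_{g_{0}\,(\bar{g})}\|_{\mathrm{op}}$ by the derivative bound coming from joint smoothness, so no extra analytic input is needed beyond what Lemma~\ref{lemm3.2} already provides.
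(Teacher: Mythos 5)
Your argument is correct and is essentially the paper's own proof: the paper likewise regards $g \mapsto \Phi^{r}_{g\,(\bar{g})}$ as a smooth (hence operator-norm continuous) map into $\mathcal{L}\bigl( H^{r}_{\bar{g}}(M),\langle (1,-1) \rangle^{\perp_{\bar{g}}} \oplus H^{r-\frac{3}{2}}(\partial M) \bigr)$ and concludes from the openness of the set of isomorphisms in that space. Your version is in fact slightly more careful than the paper's, since you make explicit both the handling of the existential quantifier over $\bar{g}$ in the definition of $\mathcal{K}^{r}_{C}$ and the passage from joint smoothness to operator-norm continuity.
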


\begin{proof}

\[
g \mapsto \Phi^{r}_{g}
\]
is a diffeomorphism mapping $\mathscr{M}^{r}_{C,1}$ to $\mathcal{L} \bigl( H^{r}_{\bar{g}}(M),\langle (1,-1) \rangle^{\perp_{\bar{g}}} \oplus H^{r-\frac{3}{2}}(\partial M) \bigr)$ .

On the other hand,
the set of all isomorphisms is open in
\[
\mathcal{L} \bigl( H^{r}_{\bar{g}}(M),\langle (1,-1) \rangle^{\perp_{\bar{g}}} \oplus H^{r-\frac{3}{2}}(\partial M) \bigr)
\]
with respect to the operator-norm.
Hence
$\mathcal{K}^{r}_{C}$ is an open subset of $\mathscr{M}^{r}_{C,1}$.
\end{proof}

\begin{lemm}
\label{lemm3.4}
$\check{\mathfrak{S}}_{C_{0}} = \mathscr{M}_{C_{0}} \cap \mathcal{K}^{r}_{C} \cap \mathfrak{S}^{r}_{C_{0}}$
and
$\check{\mathfrak{S}}_{C^{1}_{0}} = \mathscr{M}_{C^{1}_{0}} \cap \mathcal{K}^{r}_{C} \cap \mathfrak{S}^{r}_{C^{1}_{0}}.$
\end{lemm}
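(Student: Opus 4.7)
The plan is to reduce both equalities to the single analytic equivalence
\[
\tfrac{R_g}{n-1}\notin\mathrm{Spec}(-\Delta_g;\mathrm{Neumann})\iff \exists\,\bar g\in\mathscr{M}_{C,1}\text{ with }\Phi^{r}_{g,(\bar g)}\text{ an isomorphism,}
\]
since any $g\in\mathscr{M}_{C^{(1)}_{0}}\cap\mathfrak{S}^{r}_{C^{(1)}_{0}}$ is already smooth, of unit volume, and has constant scalar curvature, so it lies in $\mathfrak{S}_{C^{(1)}_{0}}$; only the non-spectral condition then distinguishes $\check{\mathfrak{S}}_{C^{(1)}_{0}}$ from $\mathfrak{S}_{C^{(1)}_{0}}$.

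For the kernel analysis, I would set $h:=(n-1)\Delta_g f+R_g f$, so that the interior part of $\Phi^{r}_{g,(\bar g)}(f)$ equals $\Delta_g h$ modulo constants. A kernel element then satisfies $\Delta_g h=K$ for some constant $K$ in the interior; $\nu_g(h)=0$ on $\partial M$ (coming from the middle boundary component together with $\nu_g(f)=0$); $\nu_g(f)=0$ on $\partial M$; and $\int_{M}f\,dv_{\bar g}=0$. Integrating $\Delta_g h=K$ against $dv_g$ and applying Stokes together with $\nu_g(h)|_{\partial M}=0$ forces $K=0$, so $h$ is Neumann-harmonic and therefore a constant $h_0$. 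The kernel condition thereby reduces to the second-order Neumann problem
\[
\bigl(-\Delta_g-\tfrac{R_g}{n-1}\bigr)f=-\tfrac{h_0}{n-1},\qquad \nu_g(f)|_{\partial M}=0,\qquad \int_{M}f\,dv_{\bar g}=0.
\]
A short Fredholm-alternative argument then shows that the kernel of $\Phi^{r}_{g,(\bar g)}$ is trivial precisely when $R_g/(n-1)\notin\mathrm{Spec}(-\Delta_g;\mathrm{Neumann})$, and this is independent of the choice of $\bar g$: if $R_g/(n-1)$ is a Neumann eigenvalue with eigenfunction $\phi$, then $\phi+c$ with $c$ chosen so that $\int_{M}(\phi+c)\,dv_{\bar g}=0$ is a nontrivial element of the kernel; and if it is not an eigenvalue, then any solution of the reduced problem is forced to be a constant which the integral constraint then kills.

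Next I would upgrade injectivity to bijectivity via elliptic theory. The operator $\Phi^{r}_{g,(\bar g)}$ is an elliptic boundary value problem of Douglis--Nirenberg type: the interior principal part is $(n-1)\Delta_g^{2}$ (order $4$), and the two boundary operators $\nu_g\{-(n-1)\Delta_g f-R_g f\}$ (order $3$) and $\nu_g(f)$ (order $1$) form a complementing system, so the Lopatinski--Shapiro condition holds; standard elliptic boundary regularity then gives Fredholmness between the stated Sobolev scales. Formal self-adjointness of $-\Delta_g\circ((n-1)\Delta_g+R_g)$ paired with these Neumann-type boundary conditions, checked via two applications of Green's formula, yields Fredholm index zero. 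Hence injectivity is equivalent to surjectivity, and combining this with the previous step (with the choice $\bar g=g$ for the forward implication) gives the displayed equivalence and both set equalities of the lemma.

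The main technical obstacle will be the ellipticity/index-zero step: carefully verifying the Lopatinski--Shapiro complementing condition at $\partial M$ for this specific fourth-order system, and then extracting Fredholm index zero from formal self-adjointness with respect to the $L^{2}$-pairing twisted by $\bar g$ (including dealing with the finite-rank constant subtraction in the first component of $\Phi$). Once the elliptic framework is in place, the kernel calculation is essentially one application of Green's identity plus the Fredholm alternative for the Neumann Laplacian.
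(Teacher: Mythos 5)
Your proposal is correct in substance, and its kernel analysis is essentially the paper's: the paper likewise reduces $\Phi^{r}_{g\,(g)}(u)=0$ to $-(n-1)\Delta_{g}u-R_{g}u=\mathrm{const}$, kills the constant by integrating over $M$ using $\nu_{g}(u)|_{\partial M}=0$ and $\int_{M}u\,dv=0$, and then invokes the Neumann spectral hypothesis (with the same separate treatment of $R_{g}=0$). Where you genuinely differ is surjectivity. The paper does not treat $\Phi$ as a single fourth-order boundary value problem; it factors $\Phi(f)=(F,G,H)$ into the two second-order Neumann problems $-\Delta_{g}u=F,\ \nu_{g}(u)=G$ and $-\Delta_{g}v-\tfrac{R_{g}}{n-1}v=u,\ \nu_{g}(v)=H$, gets index zero for each by deforming the zeroth-order term to a coercive reference operator $-\Delta_{g}+\alpha$ (solved variationally) and citing invariance of the index under such perturbations, identifies the cokernel of the pure Neumann Laplacian with $\langle(1,-1)\rangle$ via Green's formula, and composes the two solution operators (adjusting by a constant when $R_{g}=0$). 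Your route --- verifying the Lopatinski--Shapiro condition for the system $\bigl((n-1)\Delta_{g}^{2}+R_{g}\Delta_{g};\ \nu_{g}\{(n-1)\Delta_{g}+R_{g}\},\ \nu_{g}\bigr)$ and extracting index zero from formal self-adjointness --- is viable (the complementing condition does hold for this biharmonic Neumann-type system, as a direct half-space computation with solutions $(a+bt)e^{-|\xi'|t}$ confirms), and it buys you the immediate implication that injectivity gives surjectivity; the cost is the ADN verification and the bookkeeping of the rank-one modifications of domain ($H^{r}_{\bar g}$) and target ($\langle(1,-1)\rangle^{\perp_{\bar g}}$), both of which the paper's factorization sidesteps. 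You also spell out the reverse inclusion --- a Neumann eigenfunction $\phi$ for the eigenvalue $R_{g}/(n-1)$, shifted to have $\bar g$-mean zero, lies in $\ker\Phi^{r}_{g\,(\bar g)}$ for \emph{every} admissible $\bar g$ --- which the paper dismisses as obvious; that detail is worth keeping, since it is exactly what makes the inclusion $\supset$ work for the existential quantifier in the definition of $\mathcal{K}^{r}_{C}$.
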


\begin{proof}
Since the proof for $\check{\mathfrak{S}}_{C^{1}_{0}}$ is exactly the same as for $\check{\mathfrak{S}}_{C_{0}}$,  
we will only prove for $\check{\mathfrak{S}}_{C_{0}}.$

( $\subset$ )

Fix $g \in \check{\mathfrak{S}}_{C_{0}}$ .

\underline{surjectivity} ;

Firstly, we consider the case that $R_{g} \neq 0$.

Given $(F,G,H) \in \langle (1,-1) \rangle^{g} \oplus H^{r-\frac{3}{2}}(\partial M)$ ,
we consider two boundary value problem:
\begin{equation}
-\Delta_{g} u = F~,~\nu_{g}(u) \Bigm|_{\partial M} = G,  \label{eq:Neu1}
\end{equation}

\begin{equation}
-\Delta_{g} v -\frac{R_{g}}{n-1} v = u~,~\nu_{g}(v) \bigm|_{\partial M} = H, \label{eq:Neu2}
\end{equation}

where $u \in H^{r-2}(M),~v \in H^{r}_{g}(M).$

We firstly consider (\ref{eq:Neu2}). 
For a fixed positive constant $\alpha \in \mathbb{R}_{> 0},$ we consider 
\[
-\Delta_{g} v + \alpha v = u~,~\nu_{g}(v) \bigm|_{\partial M} = H.
\]
Thus we can show that there is a unique solution by using standard variational argument.

Let 
\[
\Tilde{L}_{g}:~H^{r}(M) \longrightarrow H^{r-2}(M) \oplus H^{r-\frac{3}{2}}(\partial M)
\]
\[
;~u \mapsto \bigl( - \Delta_{g} u + \alpha u~,~\nu_{g}(u) \bigm|_{\partial M} \bigr)
\]
be the operatopr corresponding to the above equation, then this is an elliptic operator in the sence of Definition 20.1.1 in \cite{hormander1985analysis}.

Therefore,
from Theorem 20.1.2 in \cite{hormander1985analysis}, $\Tilde{L}_{g}$ is a Fredholm operator.
Hence
$\mathrm{dim}~\mathrm{Ker}\Tilde{L}_{g} < \infty,$ $\mathrm{dim}~\mathrm{Coker}\Tilde{L}_{g} < \infty$ and $\mathrm{Im}\Tilde{L}_{g}$ is closed.
Moreover, because of the existence and uniqueness of the above boundary value problem,
$\mathrm{ind}(\Tilde{L}_{g}) = \mathrm{dim}~\mathrm{Ker}\Tilde{L}_{g}-\mathrm{dim}~\mathrm{Coker}\Tilde{L}_{g} = 0$ .

We shall back to (\ref{eq:Neu2}). We consider the corresponding operator:
\[
L_{g}:~H^{r}(M) \longrightarrow H^{r-2}(M) \oplus H^{r-\frac{3}{2}}(\partial M)
\]
\[
;~u \mapsto \bigl( -\Delta_{g} u - \frac{R_{g}}{n-1} u~,~\nu_{g}(u) \bigm|_{\partial M} \bigr),
\]
then, this operator is also an elliptic operator.
From Theorem 20.1.8 in \cite{hormander1985analysis}, $\mathrm{ind}(L_{g}) = \mathrm{ind}(\Tilde{L}_{g})$ and since $g \in \check{\mathfrak{S}}_{C_{0}},$
$\mathrm{Ker}L_{g} = \{ 0 \}.$ Hence $\mathrm{dim}~\mathrm{Coker}L_{g} = 0.$
Therefore $L_{g}$ is surjective.

Next, we consider (\ref{eq:Neu1}).
The ellipticity only depends on its principal symbol, thus (\ref{eq:Neu1}) is also elliptic
(Exactly speaking, the operator corresponding to (\ref{eq:Neu1}) is elliptic).
Let 
\[
\Check{L}_{g}:~H^{r}(M) \longrightarrow H^{r-2}(M) \oplus H^{r-\frac{3}{2}}(\partial M)
\]
\[
;~u \mapsto \bigl( -\Delta_{g} u~,~\nu_{g}(u) \bigm|_{\partial M} \bigr)
\]
be the corresponding operator, then from Theorem 20.1.8 in \cite{hormander1985analysis} and the above things,
\[
0 = \mathrm{ind}(\Tilde{L}_{g}) = \mathrm{ind}(\Check{L}_{g}).
\]
Since $\mathrm{Ker}\Check{L}_{g} = \mathbb{R}~( = \{ \mathrm{constant~functions} \}),$ $\mathrm{dim}~\mathrm{Ker}\Check{L}_{g} = 1.$

\noindent
Thus $\mathrm{dim}\mathrm{Coker}\Check{L}_{g} = 1.$
On the other hand, from the Green's formula,
\[
(F,G) \in \mathrm{Im}~\Check{L}_{g} \Rightarrow \int_{M} F~dv_{g}-\int_{\partial M} G~ds_{g|_{\partial M}} = 0.
\]
Hence $\mathrm{Coker}\Check{L}_{g} \cong \langle (1,-1) \rangle.$
Thus $\mathrm{Im}\Check{L}_{g} = \langle (1,-1) \rangle^{\perp_{g}}.$
Therefore $\Phi^{r}_{g~(g)}$ is surjective if $R_{g} \neq 0.$

Next, we consider the case that $R_{g} = 0.$
The above observation implies that (\ref{eq:Neu1}) has a unique solution up to constants.
That is, if we take a solution $u(F,G)$ of (\ref{eq:Neu1}), then $u(F,G) + C$ ( $C$ is arbitrary constant) is also solution of this equation.
Hence, for given $H$, we take a constant $C$ so that
\[
\int_{M} u(F,G) + C~dv_{g} = - \int_{\partial M} H~ds_{g|_{\partial M}}.
\]
Then, from the above observation, there exists a solution $v$ of (\ref{eq:Neu2}).
Therefore $\Phi^{r}_{g~(g)}$ is also surjective if $R_{g} = 0$. 

\underline{injectivity} ;

Let $\Phi^{r}_{g}(u) = 0$ , then
\begin{equation}
(n-1)(\Delta_{g})^{2} u + R_{g} \Delta_{g}u = 0,  \label{eq:inj1}
\end{equation}
\begin{equation}
\nu_{g} \bigl\{ -(n-1)\Delta_{g} u - R_{g} u \bigr\} \bigm|_{\partial M} = 0,  \label{eq:inj2}
\end{equation}
\begin{equation}
\nu_{g}(u) \bigm|_{\partial M} = 0.  \label{eq:inj3}
\end{equation}

Multiply the contents in \{ \} of (\ref{eq:inj2}) by the left hand side of (\ref{eq:inj1}) and integration it over $M.$
Thus, by integration by parts, (\ref{eq:inj1}) and (\ref{eq:inj2}),
\[
-(n-1)\Delta_{g}u - R_{g}u = \mathrm{const} .
\]
On the other hand,
\[
\bigintss_{M} -(n-1)\Delta_{g}u - R_{g}u~dv_{g} = \bigintss_{\partial M} (n-1) \nu_{g}(u)~ds_{g|_{\partial M}}-R_{g} \bigintss_{M} u~dv_{g}.
\]
Hence, from (\ref{eq:inj3}) and $u \in H^{r}_{g}(M)$, the first term of the right hand side is zero. Thus, since $u \in H^{r}_{g}$ , the second term also vanish.
Therefore, we have
\begin{equation}
-(n-1)\Delta_{g} u - R_{g} u = 0.  \label{eq:inj4}
\end{equation}
Hence, if $R_{g} \neq 0$, then $u = 0$.

On the other hand, if $R_{g} = 0$, we multiply the both sides of (\ref{eq:inj4}) by $u$, integral over $M,$
use the integration by parts and get $u = \mathrm{const}.$ But, since $u \in H^{r}_{g}(M)$, $u \equiv 0.$  
Therefore $\Phi^{r}_{g~(g)}$ is injective.

( $\supset$ ) This inclusion is obvious.
\end{proof}

\begin{lemm}
\label{lemm3.5}
$\check{\mathfrak{S}}_{C^{1}_{0}} \neq \emptyset.$
\end{lemm}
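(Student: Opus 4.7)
The plan is to exhibit a single metric in $\check{\mathfrak{S}}_{C^{1}_{0}}$ by first solving the relative Yamabe problem in the conformal class $C=[g_{0}]$ and then, if necessary, perturbing to avoid the Neumann spectrum.

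First, I would invoke the existence part of the relative Yamabe theorem (Escobar; see also \cite{akutagawarelative}) on the conformal class $C$ with the minimal boundary condition: this supplies a metric $\bar{g}=u^{4/(n-2)}g_{0}\in C$ with $R_{\bar{g}}=\mathrm{const}$ and $H_{\bar{g}}=0$, which after constant rescaling has unit volume.

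Next, I would verify $\bar{g}\in\mathfrak{S}_{C^{1}_{0}}$. Under a conformal change $\bar{g}=u^{4/(n-2)}g_{0}$, the boundary transformation law for the mean curvature reads schematically
\[
H_{\bar{g}}=u^{-n/(n-2)}\bigl(H_{g_{0}}u+\tfrac{2}{n-2}\nu_{g_{0}}(u)\bigr).
\]
Combined with $H_{g_{0}}=0=H_{\bar{g}}$, this forces $\nu_{g_{0}}(u)=0$ on $\partial M$, and therefore $f:=u^{4/(n-2)}$ belongs to $C^{\infty}_{+}(M)_{N}$ with $j^{1}_{\partial M}\bar{g}=j^{1}_{\partial M}(fg_{0})$, giving $\bar{g}\in\mathscr{M}_{C^{1}_{0}}$. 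Together with $R_{\bar{g}}=\mathrm{const}$ and $\mathrm{Vol}(M,\bar{g})=1$, this places $\bar{g}$ in $\mathfrak{S}_{C^{1}_{0}}$.

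It remains to secure the non-resonance condition $R_{\bar{g}}/(n-1)\notin\mathrm{Spec}(-\Delta_{\bar{g}};\mathrm{Neumann})$. The non-zero Neumann spectrum is a discrete sequence of positive numbers. If the relative Yamabe invariant of $(M,\partial M,C)$ is non-positive, then $R_{\bar{g}}\le 0$ and the condition is automatic, so $\bar{g}\in\check{\mathfrak{S}}_{C^{1}_{0}}$. When the Yamabe invariant is positive, the possibility $R_{\bar{g}}/(n-1)=\lambda_{j}$ for some eigenvalue $\lambda_{j}$ must be excluded by a perturbation inside $\mathfrak{S}_{C^{1}_{0}}$. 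Using that $R_{g}$ and the Neumann eigenvalues depend continuously on $g$, and that only finitely many eigenvalues can lie in any bounded interval around $R_{\bar{g}}/(n-1)$, a generic small deformation within $\mathfrak{S}_{C^{1}_{0}}$ moves $R/(n-1)$ off the discrete bad set while preserving the boundary and constant-scalar-curvature constraints.

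The main obstacle is the last step in the positive Yamabe case: one must produce a genuine non-trivial smooth family in $\mathfrak{S}_{C^{1}_{0}}$ through $\bar{g}$. This is handled by the implicit function theorem applied to the constant-scalar-curvature condition $R_{g}=\mathrm{const}$ on the Hilbert manifold $\mathscr{M}^{s}_{C^{1}_{0}}$, using the ellipticity and surjectivity of $\Phi^{r}_{g\,(\bar{g})}$ established later for elements of $\mathcal{K}^{r}_{C}$ (cf.\ Lemmas \ref{lemm3.3}, \ref{lemm3.4})—so the perturbation argument can be read off from the Fredholm/elliptic framework already developed.
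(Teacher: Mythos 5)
Your construction takes a genuinely different route from the paper's, and in the positive-Yamabe case it has two real gaps. First, the existence step: you invoke Escobar's solution of the relative Yamabe problem for an \emph{arbitrary} conformal class $C$, but that existence theorem is conditional — the paper itself only quotes \cite{escobar1992yamabe} under the hypotheses (a) or (b1)--(b3) of Theorem \ref{theo4.1}, and the remaining cases are not covered by the reference you lean on. So for a general $(M,C)$ you have not produced even one relative csc metric in $C$. Second, the perturbation step is circular. To move $R_{\bar g}/(n-1)$ off the Neumann spectrum you want to run the implicit function theorem on the constraint $R_{g}=\mathrm{const}$ inside $\mathscr{M}^{s}_{C^{1}_{0}}$, using invertibility of $\Phi^{r}_{g}$. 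But by Lemma \ref{lemm3.4}, the locus in $\mathfrak{S}^{r}_{C^{1}_{0}}$ where $\Phi^{r}_{g}$ is an isomorphism is precisely $\check{\mathfrak{S}}_{C^{1}_{0}}$; at a resonant metric the operator has kernel, Lemma \ref{lemm3.6} gives no manifold structure there, and the ``generic small deformation within $\mathfrak{S}_{C^{1}_{0}}$'' you need is exactly the nondegeneracy you are trying to arrange.

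The paper sidesteps both problems by \emph{not} staying in the conformal class $C$: membership in $\mathscr{M}_{C^{1}_{0}}$ constrains only the $1$-jet of the metric along $\partial M$, so the metric may be changed freely in the interior. Since $\dim M\ge 3$, one can always construct a metric in $\mathscr{M}_{C^{1}_{0}}$ of \emph{negative} constant scalar curvature (see \cite{cruz2019prescribing}), normalized to unit volume; then $R_{g}/(n-1)<0$ can never equal a non-zero Neumann eigenvalue, all of which are positive, so the spectral condition is automatic and no perturbation is needed. Your argument does go through essentially verbatim when $C$ happens to admit a relative csc representative with $R\le 0$ (where the conformal existence theory is also unproblematic), but that is exactly the easy case; the conformal-class-preserving strategy cannot handle a class whose relative Yamabe constant is positive without resolving both issues above.
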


\begin{proof}

Since $\mathrm{dim} M \ge 3,$ we can construct a negative constant scalar curvature metric $g$ (see \cite{cruz2019prescribing}).
Thus, since the eigenvalues of $-\Delta_{g}$ are positive(\cite[Theorem~4.4]{aubin2013some}), from Lemma~\ref{lemm3.4}, 
$g \in \check{\mathfrak{S}}_{C^{1}_{0}}.$
\end{proof}

\begin{lemm}
\label{lemm3.6}

$\mathfrak{S}^{r}_{C^{(1)}_{0}} \cap \mathcal{K}^{r}_{C}$ is an ILH-submanifold of $\mathscr{M}^{r}_{C,1}.$
\end{lemm}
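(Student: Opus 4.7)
The plan is to realize $\mathfrak{S}^{r}_{C^{(1)}_{0}} \cap \mathcal{K}^{r}_{C}$ as the zero locus of a smooth map whose differential is split surjective at every zero, and then to apply the Hilbert implicit function theorem at each Sobolev level, with a final consistency check to promote the result to an ILH-submanifold. Fix a reference metric $\bar{g} \in \mathscr{M}_{C,1}$ and define
\[
\mathcal{F}^{r} : \mathcal{K}^{r}_{C} \cap \mathscr{M}^{r}_{C^{(1)}_{0},1} \longrightarrow \langle (1,-1) \rangle^{\perp_{\bar{g}}}, \qquad \mathcal{F}^{r}(g) := \pi_{\bar{g}}\bigl( \Delta_{g} R_{g},\ \nu_{g}(R_{g})|_{\partial M} \bigr),
\]
where $\pi_{\bar{g}}$ denotes the $L^{2}(\bar{g})$-orthogonal projection. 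Smoothness of $\mathcal{F}^{r}$ follows from Lemma~\ref{lemm3.2} together with the trace theorem. The first verification is that $(\mathcal{F}^{r})^{-1}(0) = \mathfrak{S}^{r}_{C^{(1)}_{0}} \cap \mathcal{K}^{r}_{C}$: one inclusion is immediate, and conversely if $\mathcal{F}^{r}(g) = 0$ then $(\Delta_{g} R_{g}, \nu_{g} R_{g}) = \lambda (1,-1)$ for some scalar $\lambda$, but the divergence identity $\int_{M} \Delta_{g} R_{g}\, dv_{g} = \int_{\partial M} \nu_{g}(R_{g})\, ds_{g}$ forces $\lambda (\mathrm{Vol}(M,g) + \mathrm{Vol}(\partial M, g|_{\partial M})) = 0$, so $\lambda = 0$; then $R_{g}$ is Neumann-harmonic on the connected manifold $M$, hence constant.

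Next I linearize at $g$ with $R_{g}$ constant. Because every first-order derivative operator annihilates constants, the Leibniz rule collapses the derivative to
\[
D_{g} \mathcal{F}^{r}(h) = \pi_{\bar{g}}\bigl( \Delta_{g} D_{g} R(h),\ \nu_{g}(D_{g} R(h))|_{\partial M} \bigr).
\]
I will then restrict attention to conformal variations $h = fg$ with $\int_{M} f\, dv_{g} = 0$ and $\nu_{g} f|_{\partial M} = 0$, which is precisely what is needed to keep $h$ tangent to $\mathscr{M}^{r}_{C^{(1)}_{0},1}$. Using the standard identity $D_{g} R(fg) = -(n-1) \Delta_{g} f - R_{g} f$ and unraveling $\pi_{\bar{g}}$, the composition $f \mapsto D_{g} \mathcal{F}^{r}(fg)$ matches, up to an overall sign, the first two components of $\Phi^{r}_{g(\bar{g})}(f)$; the explicit averaging terms appearing in the first component of $\Phi$ are exactly what $\pi_{\bar{g}}$ produces.

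Now choose $\bar{g}$ so that $\Phi^{r}_{g(\bar{g})}$ is an isomorphism, which is possible since $g \in \mathcal{K}^{r}_{C}$, and observe that the third component $\nu_{g}(f)|_{\partial M}$ of $\Phi$ is precisely the Neumann constraint singling out conformal variations tangent to $\mathscr{M}^{r}_{C^{(1)}_{0},1}$. The isomorphism then restricts to a topological isomorphism between $\{ f \in H^{r}_{\bar{g}}(M) : \nu_{g} f|_{\partial M} = 0 \}$ and $\langle (1,-1) \rangle^{\perp_{\bar{g}}}$. This yields simultaneously the surjectivity of $D_{g} \mathcal{F}^{r}$ and a closed topological complement $V := \{ fg : f \in H^{r}_{\bar{g}}(M),\ \nu_{g} f|_{\partial M} = 0 \}$ of $\ker D_{g} \mathcal{F}^{r}$ inside $T_{g} \mathscr{M}^{r}_{C^{(1)}_{0},1}$. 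The Hilbert implicit function theorem then produces a Hilbert submanifold structure on $(\mathcal{F}^{r})^{-1}(0)$ in a neighborhood of $g$ for every $r > n/2 + 4$; running the same argument at each level and using the naturality of $\mathcal{F}^{r}$ in $r$ (via Lemma~\ref{lemm3.1}) to see that the local models patch, one obtains the ILH-submanifold structure as in Koiso's treatment of the closed case.

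The main obstacle I foresee is the algebraic bookkeeping of the middle step: verifying that the $L^{2}(\bar{g})$-projection $\pi_{\bar{g}}$ exactly reproduces the averaging corrections built into the first component of $\Phi^{r}_{g(\bar{g})}$, and performing the several integrations by parts on the manifold with boundary carefully enough that the identification between $D_{g} \mathcal{F}^{r}|_{V}$ and the first two components of $\Phi^{r}_{g(\bar{g})}$ is exact rather than merely up to lower-order boundary terms; once this is nailed down, the isomorphism hypothesis built into $\mathcal{K}^{r}_{C}$ delivers the submanifold structure with no further analytic input.
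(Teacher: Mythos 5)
Your proposal is correct and follows essentially the same route as the paper: both realize the set as the zero locus of a map built from $\bigl(\Delta_{g}R_{g},\ \nu_{g}(R_{g})|_{\partial M}\bigr)$ corrected off the span of $(1,-1)$, verify surjectivity of the linearization by restricting to conformal variations $h=fg$ and invoking the isomorphism $\Phi^{r}_{g\,(\bar{g})}$ that defines $\mathcal{K}^{r}_{C}$, and conclude by the inverse/implicit function theorem at each Sobolev level. The only cosmetic differences are that you absorb the boundary condition into the domain $\mathscr{M}^{r}_{C^{(1)}_{0},1}$ and drop the mean-curvature component, whereas the paper keeps a third component $\tfrac{2}{n-1}H_{g}$ in its map $\Psi$ on $\mathscr{M}^{r}_{C,1}$ and passes to the $C^{1}_{0}$ case afterwards by intersecting; your explicit identification of the closed complement $V$ of $\ker D_{g}\mathcal{F}^{r}$ is a detail the paper leaves implicit.
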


\begin{proof}
For each $g \in \mathfrak{S}^{r}_{C_{0}} \cap \mathcal{K}^{r}_{C}$ , we define a map
\[
\Psi:~\mathscr{M}^{r}_{C,1} \longrightarrow \langle (1,-1) \rangle^{\perp_{\bar{g}}} \oplus H^{r-\frac{3}{2}}(\partial M)
\]
by
\small
\[
g \mapsto \Bigl( -\Delta_{g} R_{g} + \int_{M} \Delta_{g} R_{g}~dv_{g_{0}} + \int_{\partial M} \nu_{g}(R_{g})~ds_{g_{0} \bigm|_{\partial M}}, \nu_{g} (R_{g}) |_{\partial M}, \frac{2}{n-1} H_{g}|_{\partial M} \Bigr).
\]
\normalsize
From Lemma \ref{lemm3.2}, this is a $C^{\infty}$-differentiable map.

We note that $\Psi^{-1}(0) = \mathfrak{S}^{r}_{C_{0}}$ . 
In fact, if $g \in \mathfrak{S}^{r}_{C_{0}}$, then $H_{g} = 0$ along $\partial M$ 
since $g \in \mathscr{M}^{r}_{C_{0}}$.
And it is clear that the first two terms are zero if $g \in \mathfrak{S}^{r}_{C_{0}}$, 
hence the inclusion of $\Psi^{-1}(0) \supset \mathfrak{S}^{r}_{C_{0}}$ holds.
 
On the other hand, for $g \in \Psi^{-1}(0)$, then, since the first and second components are both zero,
\[
\Delta_{g} R_{g} = \mathrm{const}~.
\]
But, since the second component is zero and from the Green's formula, this constant must be zero.
Thus, by multiplying $\Delta_{g} R_{g}$ by $R_{g}$ and integrating it over $M,$ from integration by parts and the fact that the second component is zero,
\[
0 = \int_{M} \bigl| \nabla_{g} R_{g} \bigr|^{2}_{g}~dv_{g}.
\]
Hence $R_{g} = \mathrm{const}.$
Since the third component is zero, we obtain $g \in \mathscr{M}^{r}_{C_{0}}.$

The derivative of $\Psi$ at $g \in \mathfrak{S}^{r}_{C_{0}} \cap \mathcal{K}^{r}_{C}$ is calculated as follows(see {\cite[Theorem~1.174]{besseeinstein}}):
\footnotesize
\begin{equation*}
\begin{aligned}
&D_{g}\Psi(h) \\
&= \Biggl( -\Delta_{g} \bigl( -\Delta_{g} tr_{g}h + \delta_{g}\delta_{g}h - \langle h, {\rm Ric_{g}} \rangle_{g} \bigr) 
- \int_{M} \Delta_{g} \bigl( \Delta_{g} tr_{g}h - \delta_{g}\delta_{g}h + \langle h, {\rm Ric_{g}} \rangle_{g} \bigr) dv_{g_{0}} \\ 
&- \int_{\partial M} \nu_{g} \bigl( \Delta_{g}tr_{g}h - \delta_{g}\delta_{g}h + \langle h, {\rm Ric_{g}} \rangle_{g} \bigr) ds_{g_{0} |_{\partial M}}, 
\nu_{g} \bigl( -\Delta_{g}tr_{g}h + \delta_{g}\delta_{g}h - \langle h, {\rm Ric_{g}} \rangle_{g} \bigr) |_{\partial M}, \\
&\frac{1}{n-1} \bigl( [ d(tr_{g} h) - \delta_{g} h ](\nu_{g}) - \delta_{g|_{\partial M}} (h(\bullet, \nu_{g})) - g_{\partial M}(\Pi_{g}, h) \bigr)|_{\partial M}  \Biggr),
\end{aligned}
\end{equation*}
\normalsize
where $\Pi_{g}$ denotes the second fundamental form of $(\partial M, g).$
Take the variation $h = fg~(f \in H^{r}_{g}(M)).$
Then, since $g \in \mathcal{K}^{r}_{C},$
we get $D_{g}\Psi$ is surjective.
Therefore, from the Inverse function theorem~(\cite{omori2017infinite}),
$\mathfrak{S}^{r}_{C_{0}} \cap \mathcal{K}^{r}_{C}$ is a submanifold of $\mathscr{M}^{r}_{C,1}$ 
and the tangent space at $g \in \mathfrak{S}^{r}_{C_{0}} \cap \mathcal{K}^{r}_{C}$
is $\mathrm{Ker} D_{g}\Psi$.

On the other hand, since $\mathfrak{S}^{r}_{C^{1}_{0}} \cap \mathcal{K}^{r}_{C}$
is an ILH submanifold of $\mathfrak{S}^{r}_{C_{0}} \cap \mathcal{K}^{r}_{C}$,
$\mathfrak{S}^{r}_{C^{1}_{0}} \cap \mathcal{K}^{r}_{C}$
is an ILH submanifold of $\mathscr{M}^{r}_{C, 1}.$ 
\end{proof}

\begin{lemm}
\label{lemm3.7}

Let $C^{r}_{+}(M)_{N} := \bigl\{ f \in H^{r}(M) \bigm| f > 0~\mathrm{on}~M,~\nu_{g_{0}}(f) = 0~\mathrm{on}~\partial M \bigr\}$ and
be a map 
\[
\chi^{r}:~C^{r}_{+}(M)_{N} \times \bigl( \mathfrak{S}^{r}_{C^{1}_{0}} \cap \mathcal{K}^{r}_{C} \bigr) \longrightarrow \mathscr{M}^{r}_{C^{1}_{0}}~;~(f,g) \mapsto f \cdot g.
\]
Then $\chi^{r}$ is $C^{\infty}$-differentiable. 
Moreover, if $g \in \check{\mathfrak{S}}_{C^{1}_{0}},$
then $D_{(f,g)} \chi^{r}$ is an isomorphism.
\end{lemm}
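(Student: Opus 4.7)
First, for the smoothness of $\chi^r$: the map $(f, g) \mapsto f g$ is bilinear and, for $r > n/2$, bounded from $H^r(M) \times H^r(S^2 T^*M)$ into $H^r(S^2 T^*M)$ by Sobolev multiplication (cf.\ Lemma~\ref{lemm3.1}), and is therefore $C^\infty$ as a map of Hilbert manifolds. I will then verify that the image actually lies in the closed submanifold $\mathscr{M}^r_{C^1_0}$: since $g \in \mathfrak{S}^r_{C^1_0}$ gives some $h \in C^\infty_+(M)_N$ with $j^1_{\partial M} g = j^1_{\partial M}(h g_0)$, the Leibniz rule together with $\nu_{g_0}(f) = \nu_{g_0}(h) = 0$ on $\partial M$ produces $j^1_{\partial M}(f g) = j^1_{\partial M}((f h) g_0)$ with $f h \in C^r_+(M)_N$, so $fg \in \mathscr{M}^r_{C^1_0}$.

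Next, the derivative is $D_{(f, g)} \chi^r(\phi, h) = \phi g + f h$. For injectivity, I will assume $\phi g + f h = 0$ and set $\rho := \phi / f$, so $h = -\rho g$ is purely conformal and the Neumann conditions on $\phi$ and $f$ force $\nu_{g_0}(\rho) = 0$ on $\partial M$. The plan is then to feed this conformal variation into the defining equations of $T_g(\mathfrak{S}^r_{C^1_0} \cap \mathcal{K}^r_C) = \ker D_g \Psi$: using $R_g = \mathrm{const}$, $H_g = 0$, and the standard conformal linearizations $D_g R(\rho g) = -(n-1)\Delta_g \rho - R_g \rho$ and $D_g H(\rho g) \propto \nu_g(\rho)$, the three components of $D_g \Psi(-\rho g)$ match (up to nonzero constant factors) the three components of $\Phi^r_g(\rho)$. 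Since $g \in \check{\mathfrak{S}}_{C^1_0}$, Lemma~\ref{lemm3.4} gives that $\Phi^r_g$ is injective, so $\rho \equiv 0$, and consequently $\phi = 0$ and $h = 0$.

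For surjectivity, given $k \in T_{fg} \mathscr{M}^r_{C^1_0}$, I will seek a Neumann $\rho$ such that $h := k/f - \rho g$ lies in $T_g(\mathfrak{S}^r_{C^1_0} \cap \mathcal{K}^r_C)$; then $\phi := f\rho$ completes the pair with $\phi g + f h = k$. Applying $D_g \Psi$ and using the identification from the injectivity step, this reduces to the equation $\Phi^r_g(\rho) = D_g\Psi(k/f)$, which is solvable because $\Phi^r_g$ is surjective (again by $g \in \check{\mathfrak{S}}_{C^1_0}$ via Lemma~\ref{lemm3.4}). The main technical obstacle will be the careful bookkeeping of boundary data: I must verify precisely that under conformal variations $\rho g$ with $\nu_{g_0}(\rho) = 0$ the operator $D_g\Psi$ coincides component by component with $\Phi^r_g$, and that the candidate $h$ so produced actually satisfies the 1-jet boundary condition defining $T_g \mathscr{M}^r_{C^1_0}$ (so that the decomposition $k = \phi g + f h$ lives in the correct tangent spaces). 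Once this identification is secured, injectivity and surjectivity of $D_{(f,g)}\chi^r$ follow at once from the isomorphism property of $\Phi^r_g$.
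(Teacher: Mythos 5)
Your smoothness and injectivity arguments coincide with the paper's: the paper likewise writes $D_{(f,g)}\chi^{r}(\phi,h)=fh+\phi g$, and for injectivity passes from $fh+\phi g=0$ to the conformal factor $\tilde{\phi}=-\phi/f$ with $\tilde{\phi}\in\mathrm{Ker}\,\Phi^{r}_{g}$ and invokes $g\in\mathcal{K}^{r}_{C}$. Where you genuinely diverge is surjectivity. The paper argues by contradiction: it takes a nonzero $\bar{h}$ in the $L^{2}$-orthogonal complement of the image, introduces the linearized scalar curvature operator $K_{g}(h)=-\Delta_{g}\mathrm{tr}_{g}h+\delta_{g}\delta_{g}h-\langle h,\mathrm{Ric}_{g}\rangle_{g}$ and its formal adjoint $K^{*}_{g}f=-(\Delta_{g}f)g+\nabla_{g}\nabla_{g}f-f\,\mathrm{Ric}_{g}$, runs a careful Green's-formula computation to kill all boundary terms (this is where the $1$-jet condition $j^{1}_{\partial M}h=j^{1}_{\partial M}(\rho g_{0})$ and $\nu_{g_{0}}(\rho)=0$ are used), obtains the orthogonal splitting $T_{fg}\mathscr{M}^{r}_{C}=H^{r}(M)g\oplus\mathrm{Ker}\,K_{g}\oplus\mathrm{Im}\,K^{*}_{g}$, writes $f\bar{h}=K^{*}_{g}\psi$, and concludes from the trace identity $0=\mathrm{tr}_{g}(f\bar{h})=-(n-1)\Delta_{g}\psi-R_{g}\psi$ together with the Neumann spectral condition that $\psi\equiv0$. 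You instead solve directly: given $k\in T_{fg}\mathscr{M}^{r}_{C^{1}_{0}}$, find $\rho$ with $\Phi^{r}_{g}(\rho)=D_{g}\Psi(k/f)$ using the surjectivity half of the isomorphism from Lemma~\ref{lemm3.4}, and set $h=k/f-\rho g$, $\phi=f\rho$. This is cleaner — it uses $\Phi^{r}_{g}$ at full strength and avoids the adjoint, the closed-range argument, and the boundary integrations entirely — whereas the paper's route parallels Koiso's closed-manifold argument and yields the explicit orthogonal decomposition as a byproduct, making visible why the spectral condition $R_{g}/(n-1)\notin\mathrm{Spec}(-\Delta_{g};\mathrm{Neumann})$ is exactly what is needed. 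Two bookkeeping points you flagged do need to be discharged: the identification $D_{g}\Psi(\rho g)=\Phi^{r}_{g}(\rho)$ must be checked against the coupled target $\langle(1,-1)\rangle^{\perp_{\bar{g}}}$ (the first two components are tied together there, so component-wise rescalings are only harmless in the third slot), and since $\mathfrak{S}^{r}_{C^{1}_{0}}\subset\mathscr{M}^{r}_{C,1}$ carries the unit-volume constraint, the $\rho$ produced on $H^{r}_{\bar{g}}(M)$ must be shifted by a constant (constants lie in the formal kernel of $\Phi^{r}_{g}$, so this costs nothing) so that $\int_{M}\mathrm{tr}_{g}h\,dv_{g}=0$; with those checks your argument goes through.
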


\begin{proof}
It is clear that this is a $C^{\infty}$-differentiable map. In fact,
\[
D_{(f,g)}\chi^{r}(\phi,h) = f h + \phi g.
\]

\underline{injectivity} ;

Let $f h + \phi g = 0$ ,then since
$\mathrm{Ker}D_{g}\Psi \in h = - \frac{\phi}{f} g =: \tilde{\phi} g,$
$\tilde{\phi} \in \mathrm{Ker}\Phi^{r}_{g}.$
On the other hand, since
$g \in \mathcal{K}^{r}_{C},$
$\tilde{\phi} = 0.$
Hence, since $f \neq 0,$ $\phi = 0~,~h = 0.$

\underline{surjectivity} ;

We shall show it by contradiction. 

If $D_{(f,g)}\chi^{r}$ is not surjective, then $\exists \bar{h} \in \bigl( \mathrm{Im}D_{(f,g)} \chi^{r} \bigr)^{\perp_{g}} \setminus \{ 0 \}$

(since 
\[
\mathrm{Im}D_{(f,g)} \chi^{r} = f T_{g} \bigl( \mathfrak{S}^{r}_{C^{1}_{0}} \cap \mathcal{K}^{r}_{C} \bigr) + H^{r}(M) g
\]
is a closed subspace in $T_{fg}\mathscr{M}^{r}_{C^{1}_{0}}$ ).
 
We define an operator on $\bigl( H^{r}(M) g \bigr)^{\perp_{g}}$ (which is a closed subspace in $T_{fg}\mathscr{M}^{r}_{C^{1}_{0}}$)
\[
K_{g}(h) := -\Delta_{g} tr_{g}h + \delta_{g}\delta_{g}h - \langle h, {\rm Ric_{g}} \rangle_{g}.
\]

From the Green's formula, 
\begin{equation*}
\begin{split}
\bigl( K_{g}h,f \bigr)_{M} - \bigl( h,K^{*}_{g}f \bigr)_{M}
=\bigl( &\nu_{g}(f) , tr_{g}h \bigr)_{\partial M} - \bigl( f , \nu_{g} (tr_{g}h) \bigr)_{\partial M} \\
&\quad+ \bigl( -h(\nu_{g} , \bullet) , \nabla_{g}f \bigr)_{\partial M} + \bigl( -\delta_{g}h(\nu_{g}) , f \bigr)_{\partial M},
\end{split}
\end{equation*}
where $K^{*}_{g}$ is the formal adjoint of $K_{g}:$
\[
K^{*}_{g} f := -\bigl( \Delta_{g} f \bigr)g + \nabla_{g}\nabla_{g}f - f  {\rm Ric_{g}}.
\]
Since $K_{g}$ is defined on $\bigl( H^{r}(M) g \bigr)^{\perp_{g}}$ and 
$h \in T_{fg} \mathscr{M}^{r}_{C^{1}_{0}},~tr_{g} h \equiv 0$ on $\partial M.$
Therefore $h = \frac{tr_{g} h}{n} g_{0} = 0$ on $\partial M.$
Hence the first three terms in the right hand side of the above equation vanish on $\partial M$.
Also, since $h \in T_{fg} \mathscr{M}^{r}_{C^{1}_{0}}, D_{fg}H(h) = 0,$
where $D_{fg}H$ denotes the derivative of the mean curvature $H$ at $fg.$
And 
\[
D_{fg} H (h) = \frac{1}{2} \bigl( [ d(tr_{fg} h) - \delta_{fg} h ] (\nu_{fg}) -\delta_{(fg)|_{\partial M}} (h(\bullet, \nu_{fg})) 
- (fg)|_{\partial M} (\Pi_{fg}, h ) \bigr),
\]
(cf. {\cite[Claim~3.1]{akutagawarelative}},~{\cite[Section~2]{cruz2019prescribing}})
where $\Pi_{fg}$ denotes the second fundamental form on $(\partial M, fg)$.
Since $h \in T_{fg} \mathscr{M}^{r}_{C^{1}_{0}}$ and $tr_{g} h = 0$, we obtain $\delta_{fg} h (\nu_{fg}) = 0.$
Take a point $p \in \partial M.$ Since $h|_{\partial M} = ( n^{-1}tr_{fg} h )g|_{\partial M} = 0$, 
$\delta_{g} h (\nu_{g}) = d h_{i 0} (x^{i})$ at $p,$
where $x^{i}$ and $h_{i 0}$ denote respectively a local normal coordinates at $p$ 
with respect to $g$ such that $\nu_{g}(p) = \frac{\partial}{\partial x^{0}}(p)$ and 
the $(i, 0)$-th components of $h$ with respect to $(x^{i}).$
Let $(y^{j})$ be a local normal coordinates at $p$ with respect to $fg$ such that $\nu_{fg}(p) = \frac{\partial}{\partial y^{0}}(p)$ and $\tilde{h}_{ij}$ the components of $h$ with respect to $(y^{j}).$
Then, 
$d h_{i 0} (x^{i})(p)
= f^{1/2} \frac{\partial (f \tilde{h}_{i 0})}{\partial y^{i}} (p)
= \bigl( f^{3/2} \frac{\partial \tilde{h}_{i 0}}{\partial y^{i}} 
+ \nu_{g} f \tilde{h}_{0 0}
+ \sum^{n}_{\alpha = 1} \frac{\partial f}{\partial x^{\alpha}} \tilde{h}_{\alpha 0} \bigr) (p)
= f^{3/2} \delta_{fg} h (\nu_{fg}) (p)
= 0.$
Hence $\delta_{g} h(\nu_{g}) = 0~\mathrm{on}~\partial M$ and 
we get
\[
\bigl( K_{g}h,f \bigr)_{M} = \bigl( h,K^{*}_{g}f \bigr)~,~\forall h \in \bigl( H^{r}(M)_{N} g \bigr)^{\perp_{g}},\forall f \in H^{r-2}(M).
\]
Thus, from the proof of Lemma \ref{lemm2.16} (and the fact that the principal symbol of $K_{g}$ is surjective),
we can get an orthogonal decomposition(cf. the proof of Lemma 2.4 in \cite{koiso1979decomposition}):
\[
T_{fg}\mathscr{M}^{r}_{C} = H^{r}(M)g \oplus \mathrm{Ker}K_{g} \oplus \mathrm{Im}K^{*}_{g}.
\]
From the hypothesis $f\bar{h} \in \Bigl( T_{g} \bigl( \mathfrak{S}^{r}_{C_{0}} \cap K^{r}_{g} \bigr) \Bigr)^{\perp_{g}},$
$f\bar{h} \in \bigl( H^{r}(M)g \bigr)^{\perp_{g}},$ and from the above decomposition, $f\bar{h} \in \mathrm{Im}K^{*}_{g}$
(since, if $fg \in \mathrm{Ker} K_{g}$, then it must be $fg \in \mathrm{Ker} \Psi^{r}_{g} = T_{g}(\mathfrak{S}^{r}_{C_{0}} \cap \mathcal{K}^{r}_{C})$).
Let $f\bar{h} = K^{*}_{g}(\psi)$ , then
\[
f\bar{h} = -\bigl( \Delta_{g} \psi \bigr)g + \nabla_{g}\nabla_{g} \psi - \psi {\rm Ric_{g}}.
\]
But, since $f\bar{h} \in \bigl( H^{r}(M)g \bigr)^{\perp_{g}}$,
\[
0 = tr_{g}(f\bar{h}) = -(n-1) \Delta_{g} \psi - R_{g} \psi.
\]
Thus we can see that the image of $K_{g}$ is included in $H^{r-2}(M)_{N}$.
Then, from the above equation and $g \in \check{\mathfrak{S}}_{C^{1}_{0}}$, we obtain $\psi \equiv 0$.
Hence $f\bar{h} \equiv 0$.
This contradicts that $f\bar{h} \neq 0$ .
Therefore $D_{(f,g)} \chi^{r}$ is surjective.

We will show that the image of $K_{g}$ is included in $H^{r-2}(M)_{N}$ in the following.
As in the proof of lemma \ref{lemm2.16}, there is a decomposition of $T_{fg} \mathscr{M}^{r}_{C^{1}_{0}}$
\[
T_{fg}\mathscr{M}^{r}_{C_{0}} = \mathrm{Im} \bigl( \alpha |_{T_{id.} \mathrm{Diff}^{r+1}_{C_{0}}} \bigr) \oplus \mathrm{Ker} \bigl( \alpha^{*} |_{T_{g}\mathscr{M}^{r}_{C_{0}}} \bigr),
\]
where $\alpha$ is the Lie derivative of $g$ and $\alpha^{*}$ is the divergence operator with respect to $g$.
Therefore, we can write that $h = h_{1} + h_{2},~h_{1} \in \mathrm{Im} \bigl( \alpha |_{T_{id.} \mathrm{Diff}^{r+1}_{C_{0}}} \bigr),~
h_{2} \in \mathrm{Ker} \bigl( \alpha^{*} |_{T_{g}\mathscr{M}^{r}_{C^{1}_{0}}} \bigr)$.

We firstly consider $h_{1}$.
Let $h_{1} = \mathcal{L}_{X} g,~X \in H^{r+1}(TM)$.
Since $K_{g}$ is the first derivative of the functional $g \mapsto R_{g}$, 
$R_{g}$ is diffeomorphism invariant and 
the derivative of the pull-back action of diffeomorphism on $g$ is $\mathcal{L}_{\bullet} g$ (as mentioning in the proof of lemma \ref{lemm2.9}),
\[
K_{g} h_{1} = 0~\mathrm{on}~M.
\] 
Hence $\nu_{g_{0}} (K_{g}h_{1}) = 0~\mathrm{on}~\partial M$.

Finally, we consider $h_{2} \in \mathrm{Ker}~\delta_{g}$.
Then $\delta_{g} \delta_{g} h_{2} = 0~\mathrm{on}~M$.
Since $h_{2} \in T_{fg} \mathscr{M}^{r}_{C_{0}}$,
we can write $j^{1}_{\partial M}h_{2} = j^{1}_{\partial M} (\rho \cdot g_{0})$
for some
$\rho \in H^{r - 1/2}(C^{\infty}(M))$.
Note that $\nu_{g_{0}} \rho = 0$ on $\partial M.$
In fact, since $\nabla_{\nu_{g_{0}}} g_{0} \equiv 0,$
$(\nabla_{\nu_{g_{0}}} \rho) g_{0} = \nabla_{\nu_{g_{0}}} h_{2}$ on $\partial M$.
Hence, $\nabla_{\nu_{g_{0}}} \rho = (1/n) g_{0}^{ij} \nabla_{\nu_{g_{0}}} (h_{2})_{ij}$ on $\partial M,$
where $g_{0}^{ij}$ and $(h_{2})_{ij}$ denote respectively the components of $g^{-1}$ and $h$ in terms of some local coordinates. 
On the other hand, $g_{0}^{ij} \nabla_{\nu_{g_{0}}} (h_{2})_{ij} = - \nabla_{\nu_{g_{0}}} (g_{0}^{ij}) (h_{2})_{ij} = 0$ on $\partial M$
since $tr_{g_{0}} h_{2} = 0$ on $M$ and $h_{2}|_{\partial M}~(= \frac{1}{n} \rho|_{\partial M})= 0$.
Therefore, on $\partial M$,
\[
\nabla_{\nu_{g_{0}}} \bigl( \langle h_{2}, {\rm Ric_{g}} \rangle_{g} \bigr) = \langle \nabla_{\nu_{g_{0}}} (\rho g_{0}), {\rm Ric_{g}} \rangle_{g}
+ \langle \rho g_{0}, \nabla_{\nu_{g_{0}}} {\rm Ric_{g}} \rangle_{g}~\mathrm{on}~\partial M,
\]
where $\langle , \rangle_{g}$ denote the natural inner product on the (0,2)-tensor bundle of $M$ induced by $g.$
Here, as mentioning above, $\rho = 0~\mathrm{on}~\partial M$.
And $\nabla_{\nu_{g_{0}}}(\rho g_{0}) = (\nu_{g_{0}}(\rho)) g_{0} + \rho \nabla_{\nu_{g_{0}}} g_{0} = 0~\mathrm{on}~\partial M$
since $\nu_{g_{0}} (\rho) = 0~\mathrm{on}~\partial M$ and $\nabla_{\nu_{g_{0}}} g_{0} \equiv 0~\mathrm{on}~M$.
Consequently, we obtain $\nu_{g_{0}} (K_{g} h_{2}) = 0~\mathrm{on}~\partial M.$
\end{proof}

We get the following lemma in the same way as Lemma~2.8 in \cite{koiso1978nondeformability}:

\begin{lemm}
\label{lemm3.8}

Let $E$ and $F$ be bector bundles over $M$ associated with the frame bundle. Any $\eta \in \mathrm{Diff}_{C_{0}}$
defines a natural linear map (by pullback)
\[
\eta^{*}:~H^{k}(E) \longrightarrow H^{k}(E)~~(k \ge n/2 + 2).
\]
Let $r \ge n/2 + 2,~A \subset H^{r}(E)$ be an open subset and let
$\phi:~A \rightarrow H^{r}(F)$ be a $C^{\infty}$ -differentiable map which commutes with the action of $\mathrm{Diff}_{C_{0}}$ .
Put $A^{s} := A \cap H^{s}(E)~(s \ge r)$ .
Then $\phi \bigl( A^{s} \bigr) \subset H^{s}(F)$ and $\phi |_{A^{s}}:~A^{s} \rightarrow H^{s}(F)$ is $C^{\infty}$ -differentiable.

\end{lemm}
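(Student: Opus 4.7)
My plan is to follow Koiso's proof of Lemma~2.8 in~\cite{koiso1978nondeformability}, adapting it to use the smaller Lie algebra $T_{id_M}\mathrm{Diff}_{C_{0}}$ of admissible vector fields characterized in Lemma~\ref{lemm2.9}(1). The argument is by induction on $s \ge r$, with the base case $s = r$ being exactly the hypothesis.

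For the inductive step, assume the lemma at order $s-1$. First observe that the derivative map $D\phi \colon A \times H^{r}(E) \to H^{r}(F)$, $(\alpha,\beta) \mapsto D\phi|_{\alpha}(\beta)$, is itself smooth and $\mathrm{Diff}_{C_{0}}$-equivariant: differentiating $\phi(\eta^{*}\alpha) = \eta^{*}\phi(\alpha)$ in $\alpha$ yields $D\phi|_{\eta^{*}\alpha}(\eta^{*}\beta) = \eta^{*} D\phi|_{\alpha}(\beta)$, so the inductive hypothesis applies to $D\phi$ as well. Now fix $\alpha \in A^{s}$ and a smooth $X \in T_{id_{M}}\mathrm{Diff}_{C_{0}}$ with flow $\{\eta_{t}\} \subset \mathrm{Diff}_{C_{0}}$. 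Differentiating the equivariance relation $\phi(\eta_{t}^{*}\alpha) = \eta_{t}^{*}\phi(\alpha)$ at $t = 0$ produces the core identity
\[
\mathcal{L}_{X}\phi(\alpha) \;=\; D\phi|_{\alpha}(\mathcal{L}_{X}\alpha).
\]
Since $\alpha \in H^{s}(E)$ and $X$ is smooth, $\mathcal{L}_{X}\alpha \in H^{s-1}(E)$, and the inductive hypothesis applied to $D\phi$ then forces $\mathcal{L}_{X}\phi(\alpha) \in H^{s-1}(F)$ for every admissible $X$.

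Converting these Lie-derivative bounds along admissible fields into the global conclusion $\phi(\alpha) \in H^{s}(F)$ is a local issue handled via a partition of unity. On any coordinate patch contained in the interior $\mathring{M}$, every smooth compactly supported vector field is automatically admissible (its boundary constraints being vacuous), so letting $X$ range over a local coordinate frame yields $\phi(\alpha) \in H^{s}_{\mathrm{loc}}(\mathring{M}, F)$. Near a boundary point, Lemma~\ref{lemm2.9}(1) still supplies plenty of admissible vector fields tangent to $\partial M$ (any smooth extension of a tangential conformal Killing field on $(\partial M, g_{0}|_{\partial M})$, cut off near the other boundary components, works), giving all tangential $H^{s-1}$-derivatives of $\phi(\alpha)$ up to the boundary.

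\textbf{The main obstacle} is recovering the missing normal $H^{s-1}$-derivative of $\phi(\alpha)$ at $\partial M$, since the condition $g_{0}(X,\nu_{g_{0}}) = 0$ on $\partial M$ rules out any admissible $X$ with nonzero normal component at the boundary. I would address this in two steps: first, use admissible $X$ that coincide with $\nu_{g_{0}}$ in the strict interior of $M$ but are smoothly truncated to vanish in a collar of $\partial M$ (their normal boundary component is then trivially zero); this recovers the normal Lie derivative of $\phi(\alpha)$ on any set sitting strictly inside $M$, hence full $H^{s}$-regularity off $\partial M$. Second, combine this interior bound with the tangential $H^{s-1}$-estimates already obtained up to $\partial M$ and invoke the collar structure together with a trace-and-completion argument to upgrade to $\phi(\alpha) \in H^{s}(F)$ uniformly up to the boundary. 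Finally, the smoothness of $\phi|_{A^{s}} \colon A^{s} \to H^{s}(F)$ follows by iterating the entire scheme on the higher derivatives $D^{k}\phi$, each of which is smooth and $\mathrm{Diff}_{C_{0}}$-equivariant.
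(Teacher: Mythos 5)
The paper does not actually write out a proof of this lemma; it simply asserts that the statement ``follows in the same way as Lemma~2.8 in \cite{koiso1978nondeformability}.'' Your reconstruction of Koiso's scheme --- induction on $s$, equivariance of $D\phi$, and the identity $\mathcal{L}_{X}\phi(\alpha) = D\phi|_{\alpha}(\mathcal{L}_{X}\alpha)$ obtained by differentiating $\phi(\eta_{t}^{*}\alpha) = \eta_{t}^{*}\phi(\alpha)$ along a flow $\{\eta_{t}\} \subset \mathrm{Diff}_{C_{0}}$ --- is faithful to that source, and you are right that the genuinely new issue on a manifold with boundary is the scarcity of admissible vector fields near $\partial M$. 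You correctly handle the interior (compactly supported fields satisfy the constraints of Lemma~\ref{lemm2.9}(1) vacuously).

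However, your treatment of the boundary has a genuine gap. By Lemma~\ref{lemm2.9}(1), an admissible $X$ must be tangent to $\partial M$ \emph{and} satisfy $\nabla_{i}X_{j}+\nabla_{j}X_{i}=\rho g_{0}$ on $\partial M$; for a tangent field the tangential--tangential block of this equation says precisely that $X|_{\partial M}$ is an intrinsic conformal Killing field of $(\partial M, \iota^{*}g_{0})$. That space is finite-dimensional and, for a generic boundary metric, trivial --- so generically every admissible vector field vanishes identically on $\partial M$, and Lie differentiation along admissible fields yields \emph{no} information about any derivative (tangential or normal) of $\phi(\alpha)$ at boundary points. Your claim that such fields give ``all tangential $H^{s-1}$-derivatives up to the boundary'' therefore fails. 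The subsequent step is also not a proof as written: truncating $\nu_{g_{0}}$ to vanish in a collar gives $H^{s}$-control only on compact subsets of the interior, with constants degenerating as the collar shrinks, and $\phi(\alpha)\in H^{s}_{\mathrm{loc}}(\mathring{M})$ does not imply $\phi(\alpha)\in H^{s}(M)$; the ``trace-and-completion argument'' has nothing to complete from, since no uniform boundary estimate has been established. To close the argument one needs an input beyond $\mathrm{Diff}_{C_{0}}$-equivariance --- e.g.\ locality of the maps actually fed into the lemma ($g\mapsto R_{g}$, $g\mapsto\mathrm{Ric}_{g}$, $g\mapsto\nabla_{g}\xi$ are differential operators, for which $H^{s}$-regularity up to the boundary is immediate), or equivariance under a larger group acting transitively near $\partial M$.
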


\begin{theo}
\label{theo3.1}

$\check{\mathfrak{S}}_{C^{1}_{0}}$ is an ILH-submanifold of $\mathscr{M}_{C^{1}_{0}}$ and the map
\[
\chi:~C^{\infty}_{+}(M)_{N} \times \check{\mathfrak{S}}_{C^{1}_{0}} \longrightarrow \mathscr{M}_{C^{1}_{0}}~;~(f,g) \mapsto f \cdot g
\]
is a local ILH-diffeomorphism into $\mathscr{M}_{C^{1}_{0}}$.
\end{theo}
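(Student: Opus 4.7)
The plan is to combine Lemma~\ref{lemm3.4}, Lemma~\ref{lemm3.6}, and Lemma~\ref{lemm3.7} with the ILH inverse function theorem (see \cite{omori2017infinite}), in close parallel with Koiso's treatment of the closed case in \cite{koiso1979decomposition}.

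\emph{ILH-submanifold structure.} For each $r > n/2 + 4$, set $\check{\mathfrak{S}}^{r}_{C^{1}_{0}} := \mathfrak{S}^{r}_{C^{1}_{0}} \cap \mathcal{K}^{r}_{C}$. By Lemma~\ref{lemm3.6} this is a Hilbert submanifold of $\mathscr{M}^{r}_{C,1}$, and by Lemma~\ref{lemm3.3} the condition $g \in \mathcal{K}^{r}_{C}$ is open, so $\check{\mathfrak{S}}^{r}_{C^{1}_{0}}$ is an open subset of $\mathfrak{S}^{r}_{C^{1}_{0}}$. Lemma~\ref{lemm3.4} identifies $\check{\mathfrak{S}}_{C^{1}_{0}} = \bigcap_{r} \check{\mathfrak{S}}^{r}_{C^{1}_{0}}$, and the smooth Sobolev inclusions $\check{\mathfrak{S}}^{s}_{C^{1}_{0}} \hookrightarrow \check{\mathfrak{S}}^{r}_{C^{1}_{0}}$ for $s \ge r$ assemble these into an ILH-submanifold structure on $\check{\mathfrak{S}}_{C^{1}_{0}} \subset \mathscr{M}_{C^{1}_{0}}$.

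\emph{Local ILH-diffeomorphism.} Fix $(f_0, g_0) \in C^{\infty}_{+}(M)_{N} \times \check{\mathfrak{S}}_{C^{1}_{0}}$. By Lemma~\ref{lemm3.7}, at every level $r$ the map $\chi^{r}$ is $C^{\infty}$-differentiable and $D_{(f_0, g_0)} \chi^{r}$ is a bijective bounded linear map between Hilbert spaces. The standard Hilbert-space inverse function theorem yields, for each $r$, a smooth local inverse $\Psi^{r}$ of $\chi^{r}$ defined on a neighborhood of $f_0 g_0$ in $\mathscr{M}^{r}_{C^{1}_{0}}$. I then invoke Omori's ILH inverse function theorem to assemble $\{\Psi^{r}\}_{r}$ into a smooth ILH-inverse $\Psi$ of $\chi$, giving the desired local ILH-diffeomorphism.

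\emph{Main obstacle.} The delicate point is verifying the ILH-compatibility required by Omori's theorem: for $s \ge r$, the level-$s$ and level-$r$ local inverses must agree on the overlap, i.e., if $\tilde{g} \in \mathscr{M}^{s}_{C^{1}_{0}}$ lies in the common domain, then $\Psi^{r}(\tilde{g}) \in C^{s}_{+}(M)_{N} \times \check{\mathfrak{S}}^{s}_{C^{1}_{0}}$ so that $\Psi^{s}(\tilde{g}) = \Psi^{r}(\tilde{g})$ by uniqueness of the factorization $\tilde{g} = fg$. This is an elliptic bootstrap: writing $\tilde{g} = fg$ with $(f,g)$ a priori only in $H^{r}$, the system defining membership in $\check{\mathfrak{S}}^{r}_{C^{1}_{0}}$ (constant scalar curvature, unit volume, conformality and vanishing normal derivative of the conformal factor on $\partial M$) linearizes to exactly the elliptic boundary value problem governed by $\Phi^{r}_{g(g)}$ from Lemma~\ref{lemm3.4}. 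The surjectivity and invertibility of $\Phi^{r}_{g(g)}$ proven there, combined with the standard $H^{s}$-regularity theorem for Neumann-type elliptic systems, upgrade $(f,g)$ from $H^{r}$ to $H^{s}$ and thereby secure the compatibility; this is the only step where the hypothesis $g_0 \in \check{\mathfrak{S}}_{C^{1}_{0}}$ (as opposed to merely $\mathfrak{S}_{C^{1}_{0}}$) is essential.
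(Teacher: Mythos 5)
Your outline is correct, but at the decisive step you take a genuinely different route from the paper. The paper's proof of Theorem~\ref{theo3.1} is deliberately short: it invokes the proof of Theorem~2.5 of Koiso's decomposition paper, using Lemma~\ref{lemm3.7} for the pointwise isomorphism of $D\chi^{r}$ and, crucially, Lemma~\ref{lemm3.8} for the inter-level compatibility. Lemma~\ref{lemm3.8} is the Omori--Koiso equivariance trick: since $\chi$ intertwines the $\mathrm{Diff}_{C_{0}}$-actions ($\eta^{*}(fg) = (f\circ\eta)\,\eta^{*}g$), its local inverse also commutes with the action, and the lemma then guarantees that the inverse maps $H^{s}$-data to $H^{s}$-data and is $C^{\infty}$ there on the \emph{same} underlying neighborhood for every $s \ge r$; the remark $\bigcap_{r} C^{r}_{+}(M)_{N} = C^{\infty}_{+}(M)_{N}$ closes the inverse limit. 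You never use Lemma~\ref{lemm3.8}; instead you handle the step you correctly identify as the main obstacle by an elliptic bootstrap on the factorization $\tilde{g}=fg$. That idea is sound and more hands-on, but two details need repair: the regularity of the conformal factor is governed by the \emph{second-order} Yamabe-type equation $-4\tfrac{n-1}{n-2}\Delta_{\tilde{g}}u + R_{\tilde{g}}u = c\,u^{\frac{n+2}{n-2}}$ with the Neumann condition $\nu(u)|_{\partial M}=0$, not by the fourth-order operator $\Phi^{r}_{g\,(g)}$ of Lemma~\ref{lemm3.4} (which is the linearization used for surjectivity of $D_{g}\Psi$, not the equation satisfied by the factor); and running the Hilbert inverse function theorem separately at each level $r$ produces neighborhoods that may shrink with $r$, so you must either fix the neighborhood at the lowest level and propagate regularity (as your bootstrap in fact allows) or appeal to the equivariance lemma as the paper does --- merely ``assembling'' the $\Psi^{r}$ is not automatic. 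Finally, your closing remark that the bootstrap is the only place where $g_{0}\in\check{\mathfrak{S}}_{C^{1}_{0}}$ (rather than $\mathfrak{S}_{C^{1}_{0}}$) is needed is inaccurate: the spectral condition is already essential in Lemma~\ref{lemm3.7}, where surjectivity of $D_{(f,g)}\chi^{r}$ is deduced from $\mathrm{Ker}\bigl(-( n-1)\Delta_{g} - R_{g}\bigr)=\{0\}$ under the Neumann condition.
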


\begin{proof}
It can be proved exactly the same as the proof of Theorem 2.5 in \cite{koiso1979decomposition} using lemma~\ref{lemm3.7} and \ref{lemm3.8}.
And note that $\bigcap_{r} C^{r}_{+}(M)_{N} = C^{\infty}_{+}(M)_{N}$.
\end{proof}

Since $\mathrm{Diff}_{C_{0}}$ and $\mathscr{M}_{C^{(1)}_{0}}$ are submanifold of $\mathrm{Diff}(M)$ and $\mathscr{M}$ respectively,
from Lemma~\ref{lemm3.8} and Theorem~\ref{theo2.1}, we can obtain the following $C^{\infty}$-version of the Slice theorem
exactly same as in \cite{koiso1978nondeformability}:

\begin{theo}[$C^{\infty}$-version of Theorem~\ref{theo2.1}]
\label{theo3.2}

For all $g \in \mathscr{M}_{C^{(1)}_{0}}$ there exists an ILH-submanifold $\mathcal{S}_{g} \subset \mathscr{M}_{C^{(1)}_{0}}$ containing $\gamma$ 
so that the following holds:

(1) $\eta \in I_{g} \Rightarrow \eta^{*} \mathcal{S}_{g} = \mathcal{S}_{g},$

(2) $\eta \in \mathrm{Diff}_{C_{0}}~,~\eta^{*} \bigl( \mathcal{S}_{g} \bigr) \cap \mathcal{S}_{g} \neq \emptyset \Rightarrow \eta \in I_{g}$ and 

(3) There exists a local section defined on an open neighborhood of $[I_{g}]:$
\[
\exists \chi~:~\bigl( \mathrm{Diff}_{C_{0}} / I_{g} \supset \bigr)U \longrightarrow \mathrm{Diff}_{C_{0}}
\]
such that
\[
F~:~U \times \mathcal{S}_{g} \longrightarrow \mathscr{M}_{C^{(1)}_{0}}~;~(u,t) \mapsto \chi(u)^{*} t 
\]
is an ILH-diffeomorphism mapping onto an open nighborhood of $g.$

\end{theo}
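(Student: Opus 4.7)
The plan is to derive the ILH-slice from the Hilbert slice Theorem~\ref{theo2.1} by taking an inverse limit over Sobolev exponents, using Lemma~\ref{lemm3.8} to control the interaction of the pullback action with smoothness, in the same spirit as \cite{koiso1978nondeformability}.

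First, I would fix $r > n/2 + 4$ and apply Theorem~\ref{theo2.1} for every $s \ge r$ to obtain a slice $\mathcal{S}^{s} \subset \mathscr{M}^{s}_{C^{(1)}_{0}}$ through $g$. The construction inside the proof of Theorem~\ref{theo2.1} — the bundle projection $P = \alpha \circ (\alpha^{*}\alpha)^{-1} \circ \alpha^{*}$, the exponential map of the weak metric $(\,,\,)_{g}$, and restriction to a suitable neighborhood $V_{1} \subset \nu_{g}$ — is obtained from differential-geometric operations that commute with the Sobolev inclusions $H^{s'} \hookrightarrow H^{s}$ for $s' \ge s$. Consequently the $\mathcal{S}^{s}$ form a nested inverse system, and
\[
\mathcal{S}_{g} := \bigcap_{s \ge r} \mathcal{S}^{s}
\]
is an ILH-submanifold of $\mathscr{M}_{C^{(1)}_{0}}$ modeled on the Fr\'echet space $\bigcap_{s \ge r} \mathrm{Ker} \bigl( \alpha^{*}|_{T_{g}\mathscr{M}^{s}_{C^{(1)}_{0}}} \bigr)$; since $g$ is smooth, $g \in \mathcal{S}_{g}$.

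Next, properties (1) and (2) transfer levelwise. The Myers--Steenrod type observation recorded in Section~2 gives $I_{g} \subset \mathrm{Diff}_{C_{0}}$, so for $\eta \in I_{g}$ Theorem~\ref{theo2.1}(1) yields $\eta^{*}\mathcal{S}^{s} = \mathcal{S}^{s}$ at every $s$, hence $\eta^{*}\mathcal{S}_{g} = \mathcal{S}_{g}$. If instead $\eta \in \mathrm{Diff}_{C_{0}}$ satisfies $\eta^{*}\mathcal{S}_{g} \cap \mathcal{S}_{g} \neq \emptyset$, then the same non-emptiness holds at every Sobolev level, and Theorem~\ref{theo2.1}(2) gives $\eta \in I_{g}$. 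For property (3), I would build the ILH section $\chi : U \to \mathrm{Diff}_{C_{0}}$ by piecing together the Hilbert-level sections $\chi^{s} : U^{s} \to \mathrm{Diff}^{s+1}_{C_{0}}$ supplied by Theorem~\ref{theo2.1}(3); these arise from Frobenius applied to the involutive distribution of Lemma~\ref{lemm2.4}(4), are natural in $s$, and hence restrict consistently to a single smooth map on the intersection $U$. The candidate $F(u,t) := \chi(u)^{*} t$ is then ILH-differentiable because the pullback action $A$ and $\chi$ both are, and that $F$ is an ILH-homeomorphism onto a neighborhood of $g$ follows from the ILH inverse function theorem of \cite{omori2017infinite} combined with the fact that each $F^{s}$ is a local homeomorphism by Theorem~\ref{theo2.1}(3).

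The main technical obstacle is regularity control across Sobolev scales: pullback by a merely $H^{s+1}$-diffeomorphism preserves only $H^{s}$-regularity, so verifying that smooth objects remain smooth under the action of $\mathrm{Diff}_{C_{0}}$ and that each ingredient in the construction sits consistently inside the inverse system requires Lemma~\ref{lemm3.8} applied separately to the projection $P$, the exponential map $\exp|_{\nu}$, the local section $\chi$, and the pullback itself. Once this bookkeeping is done, the argument parallels \cite{koiso1978nondeformability} nearly verbatim; the only new input beyond the closed-manifold case is the boundary compatibility with $\mathrm{Diff}_{C_{0}}$ and $\mathscr{M}_{C^{(1)}_{0}}$, which has already been dealt with in Section~2.
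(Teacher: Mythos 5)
Your proposal is correct and follows essentially the same route as the paper, whose entire proof consists of the remark that the result is obtained ``from Lemma~\ref{lemm3.8} and Theorem~\ref{theo2.1} \dots exactly same as in \cite{koiso1978nondeformability}''; your write-up is simply a more explicit account of that inverse-limit argument, with Lemma~\ref{lemm3.8} doing the regularity bookkeeping across Sobolev levels. No discrepancy in approach.
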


Consequently, from this Slice theorem and Theorem~\ref{theo3.1}, 
we can prove Main Theorem in Section 1.

\begin{proof}[Proof of Main Theorem]
From Theorem \ref{theo3.1}, we can decompose as
\[
g(t) = f(t) \tilde{g}(t),
\]
where $f(t)$ is a deformation of $f$ in $C^{\infty}_{+}(M)_{N}$ and
$\tilde{g}(t)$ is a deformation of $\bar{g}$ in $\check{\mathfrak{S}}_{C^{1}_{0}}$.
Moreover, from Theorem~\ref{theo3.2}, $\tilde{g}(t)$ can be decomposed as
\[
\tilde{g}(t) = \phi(t)^{*} \bar{g}(t)~\mathrm{with}~\delta \bar{g}^{'}(0) = 0.
\]

Since the scalar curvature is invariant under the action of diffeomorphisms,
\[
R_{\tilde{g}(t)} = R_{\bar{g}(t)} \equiv \mathrm{const}.
\]
\end{proof}

\begin{theo}
\label{coro3.2}

For any $g = f\bar{g}~( f \in C^{\infty}_{+}(M)_{N},~\bar{g} \in \check{\mathfrak{S}}_{C^{1}_{0}} )$ and any smooth deformation
$\{ g(t) \}_{t \in (-\epsilon,\epsilon)} ( \subset \mathscr{M}_{C^{1}_{0}} )$ of $g$ for sufficiently small $\epsilon > 0,$
there exists uniquely a smooth deformation $\{ f(t) \}_{t \in (-\epsilon,\epsilon)} ( \subset C^{\infty}_{+}(M)_{N} )$ of $f,$
a smooth one $\{ \phi(t) \}_{t \in (-\epsilon,\epsilon)} ( \subset \mathrm{Diff}_{C_{0}} )$ of $id_{M}$
and a smooth one $\{ \bar{g}(t) \}_{t \in (-\epsilon,\epsilon)} ( \subset \check{\mathfrak{S}}_{C^{1}_{0}} )$ of $\bar{g}$
with $\delta \bigl( f^{'}(0)\bar{g} + f \bar{g}^{'}(0) \bigr) = 0$
such that
\[
g(t) = \bigl( f(t) \circ \phi(t) \bigr) \phi(t)^{*} \bar{g}(t).
\]

\end{theo}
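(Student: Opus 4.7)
The plan is to apply the two main ingredients of the Main Theorem in the reverse order: first decompose $g(t)$ as a pullback using the $C^\infty$-slice theorem (Theorem~\ref{theo3.2}), then conformally split the resulting in-slice deformation using Theorem~\ref{theo3.1}. In this order the divergence-free condition lands directly on the derivative of the slice factor, which by the product rule is exactly $f'(0)\bar{g} + f\bar{g}'(0)$.

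Applying Theorem~\ref{theo3.2} at $g = f\bar{g} \in \mathscr{M}_{C^1_0}$ yields an ILH-slice $\mathcal{S}_g \subset \mathscr{M}_{C^1_0}$ through $g$, together with a local section $\chi$ normalized so that $\chi([I_g]) = id_M$, such that the map $(u,h) \mapsto \chi(u)^{*}h$ is an ILH-diffeomorphism onto a neighborhood of $g$. For $\epsilon$ small enough, the curve $\{g(t)\}$ stays in this neighborhood and therefore splits uniquely as $g(t) = \phi(t)^{*} h(t)$, where $\phi(t) := \chi(u(t)) \in \mathrm{Diff}_{C_{0}}$ satisfies $\phi(0) = id_M$, and $h(t) \in \mathcal{S}_g$ with $h(0) = g$, both smooth in $t$. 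Since $(f, \bar{g}) \in C^{\infty}_{+}(M)_{N} \times \check{\mathfrak{S}}_{C^{1}_{0}}$, Theorem~\ref{theo3.1} says that $(F,G) \mapsto FG$ is a local ILH-diffeomorphism near $(f, \bar{g})$; shrinking $\epsilon$ once more, we obtain unique smooth deformations $f(t) \in C^{\infty}_{+}(M)_{N}$ and $\bar{g}(t) \in \check{\mathfrak{S}}_{C^{1}_{0}}$ with $h(t) = f(t)\bar{g}(t)$. Substituting gives
$$g(t) \;=\; \phi(t)^{*}\bigl(f(t)\bar{g}(t)\bigr) \;=\; \bigl(f(t) \circ \phi(t)\bigr)\,\phi(t)^{*}\bar{g}(t),$$
which is the claimed decomposition.

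For the divergence-free condition, recall from the proof of Theorem~\ref{theo2.1} that the slice $\mathcal{S}_g$ is constructed as $\exp_g(V_1)$ for a neighborhood $V_1$ of the origin in the normal space $\mathrm{Ker}(\alpha^{*}|_{T_g\mathscr{M}^{s}_{C^{1}_{0}}})$, where $\alpha^{*}$ coincides up to a nonzero factor with the divergence $\delta_g$. Consequently $T_g \mathcal{S}_g = \mathrm{Ker}\,\delta_g \cap T_g \mathscr{M}^{s}_{C^{1}_{0}}$, and since $h(t) \in \mathcal{S}_g$ for every $t$ we have $h'(0) \in \mathrm{Ker}\,\delta_g$. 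But $h(t) = f(t)\bar{g}(t)$ gives $h'(0) = f'(0)\bar{g} + f\bar{g}'(0)$, so $\delta_g(f'(0)\bar{g} + f\bar{g}'(0)) = 0$ as required. Uniqueness of the triple $(f(t), \phi(t), \bar{g}(t))$ follows from uniqueness in the slice decomposition (once $\chi$ is fixed) together with Theorem~\ref{theo3.1}. The step that will need the most care is the identification $T_g\mathcal{S}_g = \mathrm{Ker}\,\delta_g$: this relies on the orthogonal decomposition $T_g\mathscr{M}^{s}_{C^{1}_{0}} = \mathrm{Im}\,\alpha \oplus \mathrm{Ker}\,\alpha^{*}$ from Lemma~\ref{lemm3.15} together with the fact that the derivative of $\exp_g$ at $0 \in \nu_g$ is the natural inclusion. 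Everything else is a direct combination of Theorems~\ref{theo3.1} and~\ref{theo3.2}.
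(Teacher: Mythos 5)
Your proposal is correct and follows exactly the route the paper intends: its proof of this theorem is the one-line instruction to reverse the order of Theorem~\ref{theo3.2} (the slice decomposition, applied first) and Theorem~\ref{theo3.1} (the conformal splitting, applied second) from the proof of the Main Theorem, which is precisely what you carry out, including the correct identification of the slice's tangent space with $\mathrm{Ker}\,\delta_{g}$ to obtain the condition $\delta_{g}\bigl(f'(0)\bar{g}+f\bar{g}'(0)\bigr)=0$. Your write-up is simply a more detailed version of the paper's argument.
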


\begin{proof}
Reverse the order of applying Theorem~\ref{theo3.1} and Theorem~\ref{theo3.2} in the proof of Main Theorem.
\end{proof}

\section{Applications}
~We use the same notations as those in the above sections.
We give two applications of the following.

\subsection{Some rigidity theorems for relative constant scalar curvature metrics}
~~In the case of $\partial M = \emptyset,$ 
a metric $g$ in a given conformal class $C$ is called a {\it Yamabe~metric}
if $g$ is a minimizer of the restriction $\mathcal{E}|_{C}$
of the normalized Einstein-Hilbert functional $\mathcal{E}.$
The infimum of $\mathcal{E}|_{C}$ is called the {\it Yamabe constant} $Y(M, C)$ of $C.$
By combining the Koiso's decomposition theorem 
with the existence of a Yamabe metric in each conformal class,
B\"{o}hm-Wang-Ziller proved the following (see the proof of {\cite[Theorem~5.1]{bohm2004variational}}):
\begin{theo}
\label{theo4.0}
Let $(M^{n}, g_{\infty})$ be a closed Riemannian manifold of
dimension $n \ge 3.$
Assume that $g_{\infty}$ is a unique constant scalar curvature ({\it csc} metric for brevity)
in its conformal class  up to rescaling
with $\lambda_{1}(-\Delta_{g_{\infty}}) > \frac{R_{g_{\infty}}}{n-1}.$
Here, $\lambda_{1}(-\Delta_{g_{\infty}})$ denotes the first non-zero eigenvalue of $-\Delta_{g_{\infty}}.$
Then each csc metric sufficiently close to $g_{\infty}$
with respect to the $C^{\infty}$-topology is a Yamabe metric in its conformal class. 
\end{theo}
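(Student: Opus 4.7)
The plan is to combine Koiso's decomposition theorem (Theorem~\ref{theo1.1}) with the solution of the Yamabe problem and the infinitesimal rigidity encoded in the hypothesis $\lambda_{1}(-\Delta_{g_{\infty}}) > R_{g_{\infty}}/(n-1)$. First, I would observe that this hypothesis forces $R_{g_{\infty}}/(n-1) \notin \mathrm{Spec}(-\Delta_{g_{\infty}})$, so after rescaling to unit volume $g_{\infty}$ lies in $\check{\mathfrak{S}}$. For any csc metric $g$ that is $C^{\infty}$-close to $g_{\infty}$, rescaling $g$ to unit volume and invoking continuity of the Laplacian spectrum and of the scalar curvature shows that $g \in \check{\mathfrak{S}}$ as well.

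Next, I would bring in a Yamabe representative $g^{Y}$ of the conformal class $[g]$: it exists by the resolution of the Yamabe problem, is csc, and can be normalized to unit volume. The crucial technical step is to show that $g^{Y}$ is also $C^{\infty}$-close to $g_{\infty}$. Under our hypotheses the linearization of the Yamabe equation at $g_{\infty}$ is $-\Delta_{g_{\infty}} - R_{g_{\infty}}/(n-1)$ restricted to functions of zero mean, which is invertible precisely by the spectral condition; combined with the uniqueness of $g_{\infty}$ as a csc metric in $[g_{\infty}]$ up to scale, an implicit function theorem argument applied to the Yamabe PDE, parametrised by the conformal class, produces a continuous family of Yamabe minimizers near $g_{\infty}$. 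Hence $g^{Y}$ is $C^{\infty}$-close to $g_{\infty}$ and therefore also lies in $\check{\mathfrak{S}}$.

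Finally, I would identify $g$ with $g^{Y}$. Both metrics lie in $\check{\mathfrak{S}}$ near $g_{\infty}$ and share the conformal class $[g]$, so $g^{Y} = u\,g$ for some positive function $u$. Applying Koiso's decomposition based at $g_{\infty}$ to both $g$ and $g^{Y}$, the uniqueness of the decomposition and the fact that each of $g$, $g^{Y}$ is its own canonical $\check{\mathfrak{S}}$-representative forces the two metrics to differ only by a conformal factor of constant value; together with the common unit-volume normalization this yields $u \equiv 1$, so $g = g^{Y}$ is the Yamabe metric in $[g]$.

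The main obstacle is the second step: controlling how the global Yamabe minimizer varies with the conformal class near $g_{\infty}$. The spectral hypothesis is exactly what makes the linearised Yamabe operator invertible, so an implicit function theorem argument produces a nearby critical point of the Yamabe functional; however, promoting this critical point to the genuine global minimizer requires the uniqueness assumption to rule out bifurcation of minimizers, and choosing function spaces so that the family of conformal classes varies smoothly near $[g_{\infty}]$ is where most of the technical effort is concentrated.
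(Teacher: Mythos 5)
Your overall skeleton --- reduce to showing that the Yamabe minimizer $g^{Y}$ of $[g]$ is itself $C^{\infty}$-close to $g_{\infty}$, then use the local diffeomorphism $C^{\infty}_{+}(M)\times\check{\mathfrak{S}}\to\mathscr{M}$ coming from Koiso's decomposition to force $g=g^{Y}$ --- is the same as the paper's (i.e.\ B\"ohm--Wang--Ziller's) strategy, and your first and third steps are fine. The gap is in your second step. An implicit function theorem applied to the Yamabe equation, with the linearization $-\Delta_{g_{\infty}}-R_{g_{\infty}}/(n-1)$ made invertible on mean-zero functions by the spectral hypothesis, only produces a local branch of csc metrics through $g_{\infty}$ as the conformal class varies; it gives no information about where the \emph{global minimizer} of $\mathcal{E}|_{[g]}$ sits. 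Your proposed fix --- that the uniqueness assumption rules out bifurcation of minimizers --- does not close this, because the uniqueness hypothesis concerns only the single conformal class $[g_{\infty}]$; it says nothing about csc metrics or minimizers in the nearby classes $[g]$, which is exactly where a faraway global minimizer could a priori live. No purely local (IFT-type) argument can exclude that possibility.

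What actually closes the gap, and what the paper does in the boundary version (see the outline of the proof of Theorem~\ref{theo4.1}), is a \emph{compactness} argument for the sequence of global minimizers. Take csc metrics $g_{i}\to g_{\infty}$ and unit-volume Yamabe minimizers $\tilde{g}_{i}=u_{i}^{4/(n-2)}g_{i}$. The spectral hypothesis rules out the round sphere, so by the resolution of the Yamabe problem $Y(M,[g_{\infty}])<Y(S^{n},[g_{\mathrm{std}}])$, and by continuity of $g\mapsto Y(M,[g])$ the same strict bound holds uniformly for large $i$. This strict inequality prevents bubbling and yields uniform elliptic estimates on the $u_{i}$, so after passing to a subsequence $u_{i_{k}}\to u_{\infty}$ in $C^{\infty}$ and $u_{\infty}^{4/(n-2)}g_{\infty}$ is a unit-volume csc metric in $[g_{\infty}]$; \emph{now} the uniqueness hypothesis applies and forces $u_{\infty}\equiv 1$. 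Hence $\tilde{g}_{i_{k}}\to g_{\infty}$, your third step (the Koiso decomposition, in the form of Corollary~\ref{theo4.2}) gives $g_{i_{k}}=\tilde{g}_{i_{k}}$ for all large $k$, and a contradiction argument converts this into the stated conclusion. In short: replace the implicit function theorem by compactness of minimizers plus uniqueness of the limit; the rest of your outline stands.
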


\begin{rema}
In the above, the condition $\lambda_{1}(-\Delta_{g_{\infty}}) > \frac{R_{g_{\infty}}}{n-1}$
implies that $(M^{n}, [g_{\infty}])$ is not conformally equivalent to the standard $n$-sphere $(S^{n}, [g_{{\rm std}}]).$
\end{rema}

On the other hand, using a compactness theorem of the space of all csc metrics in a fixed conformal class
(proved by Khuri-Maqrues-Schoen~{\cite[Theorem~1.1]{khuri2009compactness}}),
one can also get the following:
\begin{theo}
Let $(M^{n}, g_{\infty})$ be a closed Riemannian manifold of
dimension either $3 \le n \le 7,$ or both $8 \le n \le 24$ and that $M$ is spin.
Assume that $g_{\infty}$ is a unique csc metric
in its conformal class  up to rescaling 
with $\lambda_{1}(-\Delta_{g_{\infty}}) > \frac{R_{g_{\infty}}}{n-1}$.
Then each csc metric sufficiently close to $g_{\infty}$
with respect to the $C^{\infty}$-topology is also a {\it unique} csc metric
up to rescaling in its conformal class.
\end{theo}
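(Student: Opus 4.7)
The plan is to argue by contradiction, invoking the Khuri-Marques-Schoen compactness theorem for csc metrics in a fixed conformal class \cite{khuri2009compactness} together with a local uniqueness statement at $g_\infty$ supplied by the spectral hypothesis (which is exactly the non-degeneracy driving Koiso's decomposition, Theorem~\ref{theo1.1}). Suppose the conclusion fails: there exists a sequence $\{g_i\}$ of csc metrics converging to $g_\infty$ in $C^\infty$ such that each $[g_i]$ contains a csc metric $\tilde g_i$ that is not a constant rescaling of $g_i$. After rescaling, I may assume $\mathrm{Vol}(M,g_i)=\mathrm{Vol}(M,\tilde g_i)=1$ and write $\tilde g_i=u_i^{4/(n-2)}g_i$ for a positive $u_i$.

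First I would observe that the hypothesis $\lambda_{1}(-\Delta_{g_\infty})>R_{g_\infty}/(n-1)$ excludes $(M,[g_\infty])$ from being conformally equivalent to $(S^n,[g_{\mathrm{std}}])$ (on the round sphere equality holds), and this strict inequality is $C^\infty$-open in $g_\infty$, so each $(M,[g_i])$ is uniformly bounded away from the round conformal class. The dimension/spin range stated is precisely that in which Khuri-Marques-Schoen give an a priori $C^{2,\alpha}$-bound on unit-volume Yamabe-type solutions $u_i$ in $[g_i]$, with constants controlled by a fixed $C^k$-norm of the background and by a positive lower bound of the gap $\lambda_{1}-R/(n-1)$. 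Elliptic bootstrap of the Yamabe equation upgrades this to uniform $C^\infty$-bounds on $u_i$, so after passing to a subsequence $\tilde g_i\to \tilde g_\infty$ in $C^\infty$. The limit is a unit-volume csc metric conformal to $g_\infty$, hence by the uniqueness hypothesis $\tilde g_\infty=g_\infty$.

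To conclude, I would use that the spectral condition places $g_\infty$ in $\check{\mathfrak{S}}$ and makes the Jacobi operator $-\Delta_{g_\infty}-R_{g_\infty}/(n-1)$ of the Yamabe functional invertible modulo the scaling direction. Either by the implicit function theorem applied to the Yamabe PDE, or by combining the uniqueness in Koiso's decomposition (Theorem~\ref{theo1.1}) with the conformality constraint, this produces a $C^\infty$-neighborhood $\mathcal{U}$ of $g_\infty$ in which every conformal class carries at most one unit-volume csc metric. For $i$ large both $g_i$ and $\tilde g_i$ lie in $\mathcal{U}\cap[g_i]$, forcing $\tilde g_i=g_i$, which contradicts the choice of $\tilde g_i$.

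The main obstacle I anticipate is making the Khuri-Marques-Schoen bound uniform as the background conformal class varies: their theorem is stated for a single fixed metric, so one must inspect the blow-up analysis in \cite{khuri2009compactness} and verify that the resulting estimates depend only on a finite $C^k$-norm of the metric together with a quantitative distance from $[g_{\mathrm{std}}]$ on $S^n$, both of which are stable under small $C^\infty$-perturbation of $g_\infty$. Once this continuous dependence is in hand, extraction of the convergent subsequence, identification of its limit, and the concluding local uniqueness argument are all standard consequences of the material developed in Sections~2 and~3.
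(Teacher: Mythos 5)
Your proposal follows essentially the same route as the paper: extract a $C^\infty$-convergent subsequence of the competing csc metrics $\tilde{g}_i = u_i^{4/(n-2)}g_i$ via the Khuri--Marques--Schoen compactness theorem, identify the limit with $g_\infty$ using the uniqueness hypothesis, and then invoke the local injectivity of $(f,\bar{g})\mapsto f\bar{g}$ near $g_\infty\in\check{\mathfrak{S}}$ coming from Koiso's decomposition (the closed analogue of Corollary~\ref{theo4.2}) to force $\tilde{g}_i=g_i$ for large $i$. The uniformity of the compactness estimates over the varying backgrounds $[g_i]$, which you rightly flag, is likewise left implicit in the paper.
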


 We can prove a similar statement below on a manifold with boundary.
When $\partial M \neq \emptyset,$ for $g \in \mathscr{M}$ with $H_{g} = 0$ along $\partial M,$
\[
Y(M , [g]_{0}) := \inf_{h \in [g]_{0}} \mathcal{E}(h)
\]
is called the {\it relative Yamabe constant} of $[g]_{0}.$
A metric $h$ with $H_{h} = 0$ along $\partial M$ is called
a {\it relative Yamabe metric}
if $Y(M, [g]_{0}) = \mathcal{E}(h)$
(see \cite{akutagawarelative} for more details).
Here, $[g]_{0}$ denotes the {\it relative conformal class} of $g$, that is,
$[g]_{0} := \bigl\{ h \in [g] \bigm| H_{h} = 0~\mathrm{along}~\partial M \bigr\} 
= \bigl\{ u \cdot g \bigm| u \in C^{\infty}_{+}(M),~\nu_{g}(u)|_{\partial M}  = 0 \bigr\}.$
Then we can prove the following:
\begin{theo}
\label{theo4.1}
Let $(M^{n}, g_{\infty})$ be a compact connected Riemannian manifold of dimension $n \ge 3$ with smooth non-empty minimal boundary $\partial M$ 
(i.e., $H_{g_{\infty}} = 0$ along $\partial M$).
Assume that $g_{\infty}$ is a unique relative csc metric in $[g_{\infty}]_{0}$ up to rescaling with
$\lambda_{1}(-\Delta_{g_{\infty}} ; \mathrm{Neumann}) > \frac{R_{g_{\infty}}}{n-1},$
where $\lambda_{1}(-\Delta_{g_{\infty}} ; \mathrm{Neumann})$ denotes the first non-zero eigenvalue of
$-\Delta_{g_{\infty}}$ with the Neumann boundary condition (see {\cite[Proposition~2.6]{akutagawa2001notes}} and {\cite[Section~1]{schoen1989variational}}) 
Moreover, we assume the following: either

(a) $(M^{n}, g_{\infty})$ has a nonumbilic point on $\partial M,$ 

\noindent
or

(b) $(M^{n}, g_{\infty})$ is umbilic boundary (therefore, $\partial M$ is totally geodesic)
satisfying that one of the followings (b1)-(b3) holds:

 (b1) the Weyl tensor does not vanish identically on $\partial M$ and $n \ge 6,$
 
 (b2) $M$ is locally conformally flat,
 
 (b3) $n = 3, 4,$ or $5.$

\noindent
Then each relative csc metric sufficiently close to $g_{\infty}$ in $\mathscr{M}_{C^{1}_{0}}$
with respect to he $C^{\infty}$-topology is a relative Yamabe metric.
\end{theo}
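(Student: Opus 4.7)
The plan is to mimic the Böhm--Wang--Ziller argument behind Theorem~\ref{theo4.0} in the closed case, using Theorem~\ref{theo3.1} in place of Koiso's decomposition and the relative-boundary analogues of Escobar existence and Khuri--Marques--Schoen compactness in place of their closed-manifold versions. I would argue by contradiction: suppose there exists a sequence $(g_i) \subset \mathscr{M}_{C^{1}_{0}}$ of relative csc metrics with $g_i \to g_\infty$ in the $C^\infty$-topology, none of which is a relative Yamabe metric in $[g_i]_{0}$. Since both properties are scale-invariant, we may rescale so that $\mathrm{Vol}(M, g_i) = \mathrm{Vol}(M, g_\infty) = 1$.

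The first step is to produce, inside each $[g_i]_{0}$, a unit-volume relative Yamabe metric $\tilde{g}_i$; this is where hypotheses~(a) or (b1)--(b3) enter, as they are the conditions under which the relative Yamabe problem is solvable. The second step is to extract a smooth limit: under the same hypotheses the relative-boundary analogue of the Khuri--Marques--Schoen $C^\infty$-compactness theorem applies to $(\tilde{g}_i)$, yielding (along a subsequence) $\tilde{g}_i \to \tilde{g}_\infty$ in $C^\infty$, where $\tilde{g}_\infty$ is a unit-volume relative Yamabe (in particular relative csc) metric in $[g_\infty]_{0}$. The uniqueness hypothesis on $g_\infty$ then forces $\tilde{g}_\infty = g_\infty$, so $\tilde{g}_i \to g_\infty$ smoothly.

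The third step uses the decomposition result to conclude $g_i = \tilde{g}_i$ for all large $i$, contradicting the standing assumption. The spectral hypothesis puts $g_\infty$ in $\check{\mathfrak{S}}_{C^{1}_{0}}$, and by continuity of the Neumann spectrum under $C^\infty$-perturbations both $g_i$ and $\tilde{g}_i$ lie in $\check{\mathfrak{S}}_{C^{1}_{0}}$ for $i \gg 1$. Writing $\tilde{g}_i = u_i \cdot g_i$ with $\nu_{g_i}(u_i) = 0$ on $\partial M$ (the relative-conformal relation), the fact that $\nu_{g_i}$ and $\nu_{g_{0}}$ are parallel along $\partial M$ gives $\nu_{g_{0}}(u_i) = 0$ as well, so $u_i \in C^{\infty}_{+}(M)_{N}$; a brief check of $j^{1}_{\partial M}$-jets using $g_i \in \mathscr{M}_{C^{1}_{0}}$ then shows $\tilde{g}_i \in \mathscr{M}_{C^{1}_{0}}$. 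Moreover $u_i \to 1$ in $C^\infty$ since $\tilde{g}_i, g_i \to g_\infty$. Hence both $(u_i, g_i)$ and $(1, \tilde{g}_i)$ lie in an arbitrarily small neighborhood of $(1, g_\infty)$ in $C^{\infty}_{+}(M)_{N} \times \check{\mathfrak{S}}_{C^{1}_{0}}$ and share the common image $\tilde{g}_i$ under $\chi$; local injectivity of the ILH-diffeomorphism $\chi$ at $(1, g_\infty)$ (Theorem~\ref{theo3.1}) gives $u_i \equiv 1$ and $g_i = \tilde{g}_i$, contradicting that $g_i$ fails to be a relative Yamabe metric.

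The hardest step is the second one. The existence of a relative Yamabe metric is fairly classical under each of (a), (b1), (b2), (b3), but the $C^\infty$-compactness of the family of relative Yamabe metrics over varying relative conformal classes near $[g_\infty]_{0}$ requires a boundary analogue of the deep Khuri--Marques--Schoen compactness theorem; the precise dichotomy (a)/(b1)/(b2)/(b3) in the theorem is shaped by the known hypotheses under which such compactness has been established on manifolds with minimal boundary. Everything else is a direct transplant of the Böhm--Wang--Ziller argument, with Theorem~\ref{theo3.1} playing the role of the Koiso map and with the $j^{1}_{\partial M}$-compatibility in the definition of $\mathscr{M}_{C^{1}_{0}}$ automatically promoting the relative-conformal Neumann condition $\nu_{g_i}(u_i)=0$ to the reference Neumann condition $\nu_{g_{0}}(u_i)=0$.
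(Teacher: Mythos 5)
Your overall skeleton (produce a unit-volume relative Yamabe metric $\tilde{g}_{i}$ in each $[g_{i}]_{0}$, extract a smooth limit, identify it with $g_{\infty}$ by uniqueness, then use the decomposition $\chi$ of Theorem~\ref{theo3.1} to force $g_{i}=\tilde{g}_{i}$ for large $i$) is exactly the paper's argument, and your third step is essentially Corollary~\ref{theo4.2}. The problem is your second step. You attribute the $C^{\infty}$-convergence of the minimizers $\tilde{g}_{i}$ to ``the relative-boundary analogue of the Khuri--Marques--Schoen compactness theorem'' and claim that the dichotomy (a)/(b1)--(b3) is shaped by the hypotheses of such a compactness result. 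That is not what these hypotheses do, and the tool you invoke is not available under them: the boundary analogue of Khuri--Marques--Schoen (Disconzi--Khuri) requires umbilic (totally geodesic) boundary and the dimension/spin restrictions $3\le n\le 7$, or $8\le n\le 24$ with $M$ spin. Hypothesis (a) explicitly allows a nonumbilic point, and (b1) allows any $n\ge 6$ with no spin assumption, so under the stated hypotheses your step two would simply fail (e.g.\ for a nonumbilic boundary in dimension $30$). The Disconzi--Khuri compactness is used in the paper only for the \emph{subsequent} theorem about uniqueness of relative csc metrics, where those stronger hypotheses are imposed.

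What (a)/(b1)--(b3) actually provide, via Escobar, is the strict Aubin-type inequality $Y(M,[g_{\infty}]_{0})<Y(S^{n}_{+},[g_{\mathrm{std}}]_{0})$. Combined with the $C^{2}$-continuity of $g\mapsto Y(M,[g]_{0})$, this gives $Y(M,[g_{i}]_{0})<Y(S^{n}_{+},[g_{\mathrm{std}}]_{0})$ uniformly for large $i$, and then the compactness of the \emph{minimizers} $u_{i}$ (solutions of the subcritical-energy Yamabe equations (\ref{vol})--(\ref{mean})) follows from the elementary concentration-compactness argument of B\"ohm--Wang--Ziller adapted to the boundary setting: energies strictly below the critical threshold preclude bubbling, so a subsequence of $u_{i}$ converges in $C^{\infty}$ to a positive solution $u_{\infty}$, with smoothness and positivity from Cherrier's regularity and the maximum principle. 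You need compactness only for minimizers across nearby conformal classes, not for the full set of csc metrics in a fixed class, which is why the much deeper Khuri--Marques--Schoen machinery is neither needed nor applicable here. Replacing your step two with this argument makes the proof go through and coincide with the paper's.
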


\begin{rema}
Escobar \cite{escobar1992yamabe} proved that, for any $(M, g_{\infty})$
satisfying either (a) or (b) in the above,
then there exists a relative Yamabe metric in $[g_{\infty}]_{0}.$
Note also that, the condition $\lambda_{1}(-\Delta_{g_{\infty}} ; \mathrm{Neumann}) > \frac{R_{g_{\infty}}}{n-1}$
implies that $(M, [g_{\infty}]_{0})$ is not conformally equivalent to the standard hemisphere $(S^{n}_{+}, [g_{{\rm std}}]_{0}).$
\end{rema}

The following follows directly from Main Theorem.

\begin{coro}
\label{theo4.2}
Assume that $g_{\infty} \in \mathfrak{S}_{C^{1}_{0}}$ satisfies 
$\lambda_{1}(-\Delta_{g_{\infty}} ; \mathrm{Neumann}) > \frac{R_{g_{\infty}}}{n-1}.$
Let $\{ g_{i} \}$ and $\{ \tilde{g}_{i} := u_{i}^{\frac{4}{n-2}} g_{i} \} \subset \mathfrak{S}_{C^{1}_{0}}$ be 
sequences each of which converges to $g_{\infty}$ with respect to the $C^{\infty}$-topology.
Then, except for a finite number of $i$, $g_{i} = \tilde{g}_{i}.$ 
\end{coro}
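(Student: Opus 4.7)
The plan is to reduce the statement to the uniqueness built into the local ILH-diffeomorphism of Theorem~\ref{theo3.1}. First, I observe that the hypothesis $\lambda_{1}(-\Delta_{g_{\infty}};\mathrm{Neumann}) > R_{g_{\infty}}/(n-1)$ forces $R_{g_{\infty}}/(n-1) \notin \mathrm{Spec}(-\Delta_{g_{\infty}};\mathrm{Neumann})$ (the non-zero Neumann eigenvalues are all $\ge \lambda_{1}$, and are positive, so this is automatic when $R_{g_{\infty}}\le 0$ and holds by the gap when $R_{g_{\infty}}>0$). Hence $g_{\infty} \in \check{\mathfrak{S}}_{C^{1}_{0}}$. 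By Lemmas~\ref{lemm3.3} and~\ref{lemm3.4}, the set $\check{\mathfrak{S}}_{C^{1}_{0}}$ is relatively open in $\mathfrak{S}_{C^{1}_{0}}$ for the $C^{\infty}$-topology, so for all sufficiently large $i$ both $g_{i}$ and $\tilde{g}_{i}$ lie in $\check{\mathfrak{S}}_{C^{1}_{0}}$.

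Next, I would verify that the conformal factor $v_{i}:=u_{i}^{4/(n-2)}$ satisfies $v_{i}\in C^{\infty}_{+}(M)_{N}$ and $v_{i}\to 1$ in $C^{\infty}$. Since $g_{i},\tilde{g}_{i}\in\mathscr{M}_{C^{1}_{0}}$, choose $f^{g}_{i},f^{\tilde g}_{i}\in C^{\infty}_{+}(M)_{N}$ with $j^{1}_{\partial M}g_{i}=j^{1}_{\partial M}(f^{g}_{i}g_{0})$ and $j^{1}_{\partial M}\tilde{g}_{i}=j^{1}_{\partial M}(f^{\tilde g}_{i}g_{0})$. Using the reference connection $\nabla^{g_{0}}$, the $j^{1}$-condition together with $\nu_{g_{0}}f^{g}_{i}=\nu_{g_{0}}f^{\tilde g}_{i}=0$ and $\nabla^{g_{0}}g_{0}=0$ yields $\nabla^{g_{0}}_{\nu_{g_{0}}}g_{i}=0$ and $\nabla^{g_{0}}_{\nu_{g_{0}}}\tilde{g}_{i}=0$ on $\partial M$. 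Expanding $\nabla^{g_{0}}_{\nu_{g_{0}}}\tilde{g}_{i}=(\nu_{g_{0}}v_{i})g_{i}+v_{i}\nabla^{g_{0}}_{\nu_{g_{0}}}g_{i}$ on $\partial M$ and substituting then gives $(\nu_{g_{0}}v_{i})\,g_{i}|_{\partial M}=0$, and non-degeneracy of $g_{i}|_{\partial M}$ forces $\nu_{g_{0}}u_{i}=0$ on $\partial M$. The $C^{\infty}$-convergence $v_{i}\to 1$ is immediate from the pointwise identity $v_{i}=\tilde{g}_{i}/g_{i}$ (computed on any nonzero component), the $C^{\infty}$-convergence of both $g_{i}$ and $\tilde{g}_{i}$ to $g_{\infty}$, and the non-degeneracy of $g_{\infty}$.

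Finally, Theorem~\ref{theo3.1} says $\chi:C^{\infty}_{+}(M)_{N}\times\check{\mathfrak{S}}_{C^{1}_{0}}\longrightarrow\mathscr{M}_{C^{1}_{0}},\ (f,g)\mapsto fg$, is a local ILH-diffeomorphism near $(1,g_{\infty})$. For $i$ large enough, both pairs $(1,\tilde{g}_{i})$ and $(v_{i},g_{i})$ lie in a neighborhood on which $\chi$ is injective, and both are sent to $\tilde{g}_{i}$. Injectivity then yields $v_{i}\equiv 1$, hence $g_{i}=\tilde{g}_{i}$. The main obstacle I expect is the verification in the second step that $u_{i}$ satisfies the Neumann condition $\nu_{g_{0}}u_{i}=0$, since this is precisely what allows the pair $(v_{i},g_{i})$ to live in the domain of $\chi$; once this admissibility is in hand, the conclusion is a direct consequence of the uniqueness of the $\chi$-decomposition provided by Theorem~\ref{theo3.1}.
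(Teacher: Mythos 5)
Your proposal is correct and follows essentially the same route as the paper: the paper's own proof is a one-line appeal to Main Theorem (whose conformal part is exactly the local injectivity of $\chi$ from Theorem~\ref{theo3.1}), together with the observation that $C^{\infty}$-convergence implies ILH-convergence. Your write-up simply supplies the details the paper leaves implicit --- that $g_{\infty}\in\check{\mathfrak{S}}_{C^{1}_{0}}$, that $\check{\mathfrak{S}}_{C^{1}_{0}}$ is relatively open so $g_{i},\tilde{g}_{i}$ eventually lie in it, and that the conformal factor satisfies the Neumann condition so that $(v_{i},g_{i})$ is an admissible preimage --- all of which check out.
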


\begin{proof}
Since $C^{\infty}$-topology is stronger than the ILH-topology,
this claim is directly derived from Main Theorem.
\end{proof}

\begin{proof}[Outline of the proof of Theorem \ref{theo4.1}]
We can assume that $(M, g_{\infty})$ has unit volume.
Let $\{ g_{i} \} \subset \mathfrak{S}_{C^{1}_{0}}$ be a sequence which converges to $g_{\infty}$
with respect to the $C^{\infty}$-topology.
For each $i,$ 
let $u_{i} \in C^{\infty}_{+}(M)_{N}$ be a solution of relative Yamabe problem in $[g_{i}]_{0},$
that is, $\tilde{g}_{i} := u_{i}^{\frac{4}{n-2}} g_{i}$ is a relative Yamabe metric of $[g_{i}]_{0}$
with unit volume.
Since $\tilde{g}_{i} \in \mathfrak{S}_{C^{1}_{0}}$ is a relative Yamabe metric, then the following hold:
\begin{equation}
\label{vol}
|| u_{i} ||_{L^{\frac{2n}{n-2}}(g_{i})} = 1,
\end{equation}
\begin{equation}
\label{scal}
-4 \frac{n-1}{n-2} \Delta_{g_{i}} u_{i} + R_{g_{i}} u_{i} = Y(M, [g_{i}]_{0}) u_{i}^{\frac{n+2}{n-2}}~\mathrm{on}~M,
\end{equation}
\begin{equation}
\label{mean}
\nu_{g_{i}} ( u_{i}) = 0~\mathrm{on}~\partial M.
\end{equation}

By \cite{escobar1992yamabe}, the assumptions in Theorem \ref{theo4.1} implies that
\[
Y(M, [g_{\infty}]_{0}) < Y(S^{n}_{+}, [g_{\mathrm{std}}]_{0}).
\]
Since $g \mapsto Y(M, [g]_{0})$ is continuous with respect to the $C^{2}$-topology,
\[
Y(M, [g_{i}]_{0}) < Y(S^{n}_{+}, [g_{{\rm std}}]_{0})
\] 
for sufficiently large $i.$
Hence, we can apply the similar argument in the proof of Theorem 5.1 in \cite{bohm2004variational}
to that on a manifold with boundary after slight modifications.
Then, there exists a subsequaence
$\{ u_{i_{k}} \} \subset \{ u_{i} \}$ and $u_{\infty} \in C^{\infty}_{+}(M)_{N},~
\tilde{R} \in \mathbb{R}$ such that 
\[
\tilde{g}_{i} = u_{i_{k}}^{\frac{4}{n-2}} g_{i_{k}} \rightarrow \tilde{g}_{\infty} := u_{\infty}^{\frac{4}{n-2}} g_{\infty}~\mathrm{as}~k \rightarrow \infty
\]
and
\begin{equation}
\label{volinfty}
|| u_{\infty} ||_{L^{\frac{2n}{n-2}}(g_{\infty})} = 1,
\end{equation}
\begin{equation}
\label{scalinfty}
-4 \frac{n-1}{n-2} \Delta_{g_{\infty}} u_{\infty} + R_{g_{\infty}} u_{\infty} = \tilde{R} u_{\infty}^{\frac{n+2}{n-2}}~\mathrm{on}~M,
\end{equation}
\begin{equation}
\label{meaninfty}
\nu_{g_{\infty}} (u_{\infty}) = 0~\mathrm{on}~\partial M.
\end{equation}
Here, the above convergence $\tilde{g}_{i} \rightarrow \tilde{g}_{\infty} (k \rightarrow \infty)$ is the $C^{\infty}$-convergence with respect to $g_{\infty}.$ 
Then, from the regularity theorem(\cite{cherrier1984problemes}) and the maximum principle,
$u_{\infty} \in C^{\infty}_{+}(M)_{N}$.
From (\ref{volinfty}) and (\ref{scalinfty}),
we have $\tilde{g}_{\infty} \in \mathfrak{S}_{C^{1}_{0}}$.
Hence, from the uniqueness assumption for $g_{\infty}$,
$\tilde{g}_{\infty} = g_{\infty}.$
Therefore $\tilde{g}_{i_{k}} = g_{i_{k}}$
from Corollary \ref{theo4.2}, except for finite number of $k.$
\end{proof}

By using a compactness result proved by Discozi-Khuri~{\cite[Theorem~1.1]{disconzi2017compactness}},
we can also prove the following:
\begin{theo}
Let $(M^{n}, g_{\infty})$ be a compact connected Riemannian manifold  with smooth non-empty totally geodesic boundary $\partial M.$
Assume that either $3 \le n \le 7,$ or both $8 \le n \le 24$ and that $M$ is spin.
Let $g_{\infty} \in \mathfrak{S}_{C^{1}_{0}}$ be a unique relative csc metric in $[g_{\infty}]_{0}$
up to rescaling with $\lambda_{1}(-\Delta_{g_{\infty}} ; \mathrm{Neumann}) > \frac{R_{g_{\infty}}}{n-1}$.
Then each relative csc metric sufficiently close to $g_{\infty}$ in $\mathscr{M}_{C^{1}_{0}}$
with respect to he $C^{\infty}$-topology is also a relative {\it unique} csc metric
up to rescaling in its relative conformal class.
\end{theo}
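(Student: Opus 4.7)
The plan is to follow the strategy of Theorem~\ref{theo4.1} but to replace the role of Escobar's existence of relative Yamabe metrics with the Disconzi-Khuri compactness of relative csc metrics. I argue by contradiction. Suppose the conclusion fails. Then there exists a sequence $\{g_{i}\} \subset \mathfrak{S}_{C^{1}_{0}}$ converging to $g_{\infty}$ in the $C^{\infty}$-topology such that each relative conformal class $[g_{i}]_{0}$ contains a relative csc metric which is not a constant multiple of $g_{i}$. After rescaling I may write this second metric as $\tilde{g}_{i} := u_{i}^{\frac{4}{n-2}} g_{i} \in \mathfrak{S}_{C^{1}_{0}}$ with $\tilde{g}_{i} \neq c \cdot g_{i}$ for any $c > 0$ and $\| u_{i} \|_{L^{\frac{2n}{n-2}}(g_{i})} = 1$. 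Each $u_{i}$ then satisfies
\[
-4 \frac{n-1}{n-2} \Delta_{g_{i}} u_{i} + R_{g_{i}} u_{i} = R_{\tilde{g}_{i}} u_{i}^{\frac{n+2}{n-2}} \quad \mathrm{on}~M, \qquad \nu_{g_{i}}(u_{i}) = 0 \quad \mathrm{on}~\partial M,
\]
with $R_{\tilde{g}_{i}}$ uniformly bounded for $g_{i}$ close to $g_{\infty}$.

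The first key step is to extract a $C^{\infty}$-convergent subsequence from $\{\tilde{g}_{i}\}$. Under the stated dimension or spin condition and the totally geodesic boundary hypothesis, the Disconzi-Khuri compactness theorem {\cite[Theorem~1.1]{disconzi2017compactness}} yields uniform $C^{k}$-bounds for unit-volume relative csc metrics in a fixed relative conformal class; a standard perturbation argument (as is routine in the Yamabe compactness literature) upgrades this to bounds uniform over $C^{\infty}$-nearby relative conformal classes. Hence, after passing to a subsequence, $\tilde{g}_{i_{k}} \to \tilde{g}_{\infty} := u_{\infty}^{\frac{4}{n-2}} g_{\infty}$ in $C^{\infty}$ for some $u_{\infty} \in C^{\infty}_{+}(M)_{N}$, and passing to the limit in the PDE shows $\tilde{g}_{\infty} \in \mathfrak{S}_{C^{1}_{0}}$ with $\tilde{g}_{\infty} \in [g_{\infty}]_{0}$. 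The uniqueness hypothesis on $g_{\infty}$ then forces $\tilde{g}_{\infty} = g_{\infty}$.

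Now both $\{g_{i_{k}}\}$ and $\{\tilde{g}_{i_{k}}\}$ are sequences in $\mathfrak{S}_{C^{1}_{0}}$ converging to $g_{\infty}$ in the $C^{\infty}$-topology, so Corollary~\ref{theo4.2} yields $\tilde{g}_{i_{k}} = g_{i_{k}}$ for all but finitely many $k$, contradicting the choice of $\tilde{g}_{i}$. The main obstacle is the compactness step: the Disconzi-Khuri theorem is stated for csc metrics in a single fixed relative conformal class, and one must track their blow-up analysis to verify that all $C^{k}$-estimates depend only on the $C^{\infty}$-norm of the ambient metric. This kind of uniformity over metric perturbations is well-established in analogous closed-manifold Yamabe compactness results, but its verification for the boundary problem is the technically delicate point; once it is in hand, the remainder is a formal consequence of the Main Theorem via Corollary~\ref{theo4.2}.
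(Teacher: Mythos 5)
Your proposal is correct and follows essentially the same route as the paper: take a second unit-volume relative csc metric $\tilde{g}_{i} = u_{i}^{\frac{4}{n-2}} g_{i}$ in each $[g_{i}]_{0}$, apply the Disconzi--Khuri compactness theorem to extract a $C^{\infty}$-convergent subsequence, identify the limit with $g_{\infty}$ via the uniqueness hypothesis, and conclude with Corollary~\ref{theo4.2}. The only differences are cosmetic (you frame it as a contradiction), and your caveat about the uniformity of the compactness estimates over $C^{\infty}$-nearby relative conformal classes is a genuine technical point that the paper's own proof also leaves implicit.
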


\begin{proof}
In the proof of Theorem \ref{theo4.1},
we will take $\tilde{g}_{i} := u_{i}^{\frac{4}{n-2}} g_{i}$ as another relative csc metric in $[g_{i}]_{0}$
with unit volume.
Then, from the compactness result {\cite[Theorem~1.1]{disconzi2017compactness}},
there exist a subsequence
$\{ u_{i_{k}} \} \subset \{ u_{i} \}$ and $u_{\infty} \in C^{\infty}_{+}(M)_{N},~
\tilde{R} \in \mathbb{R}$ such that 
\[
\tilde{g}_{i} = u_{i_{k}}^{\frac{4}{n-2}} g_{i_{k}} \rightarrow \tilde{g}_{\infty} := u_{\infty}^{\frac{4}{n-2}} g_{\infty}~\mathrm{as}~k \rightarrow \infty
\]
and
\begin{equation*}
|| u_{\infty} ||_{L^{\frac{2n}{n-2}}(g_{\infty})} = 1,
\end{equation*}
\begin{equation*}
-4 \frac{n-1}{n-2} \Delta_{g_{\infty}} u_{\infty} + R_{g_{\infty}} u_{\infty} = \tilde{R} u_{\infty}^{\frac{n+2}{n-2}}~\mathrm{on}~M,
\end{equation*}
\begin{equation*}
\nu_{g_{\infty}} (u_{\infty}) = 0~\mathrm{on}~\partial M.
\end{equation*}
Then, 
the same argument as that in the proof of Theorem \ref{theo4.1} implies that
$\tilde{g}_{i_{k}} = g_{i_{k}}$
except for a finite number of $k$.
This completes the proof.
\end{proof}

\subsection{A characterization of relative Einstein metrics}
~~In the case of $\partial M = \emptyset,$ 
we recall the {\it Yamabe invariant} $Y(M)$ of $M$ (cf. \cite{kobayashi1987scalar}, \cite{schoen1989variational}) defined by
\[
Y(M) := \sup_{C \in \mathcal{C}(M)} Y(M, C) = \sup_{C \in \mathcal{C}(M)} \bigl( \inf_{g \in C} Y(M, C) \bigr),
\]
where $\mathcal{C}(M)$ denotes the set of all conformal classes on $M.$
By the Koiso's decomposition theorem, one can get the following:
\begin{theo}[cf.~{\cite[Proposition~5.89]{aubin2013some}}]
Let $M^{n}$ be a closed manifold of dimension $\ge 3$ and
$g$ a {\it unique} csc metric (up to rescaling) in its conformal class $[g].$
Assume that $Y(M)$ is attained by $g$ and that $\lambda_{1}(-\Delta_{g}) > \frac{R_{g}}{n-1}.$
Then, $g$ is an Einstein metric.
\end{theo}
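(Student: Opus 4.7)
The plan is to show that $g$ is a critical point of the normalized Einstein--Hilbert functional $\mathcal{E}$ restricted to the unit-volume subset of $\mathscr{M}$, which forces $g$ to be Einstein; I will reduce an arbitrary variation to its three natural Koiso components and handle each separately.

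First, since $\lambda_{1}(-\Delta_{g}) > R_{g}/(n-1)$ excludes the round-sphere conformal class, $Y(M,[g]) < Y(S^{n})$, so by Aubin--Schoen there is a csc Yamabe minimizer in $[g]$. The uniqueness hypothesis identifies this minimizer, up to rescaling, with $g$ itself; rescaling to unit volume if necessary, $\mathcal{E}(g) = Y(M,[g])$. Combined with the hypothesis $\mathcal{E}(g) = Y(M)$, we obtain $Y(M) = Y(M,[g])$, so the supremum defining $Y(M)$ is realized by $[g]$.

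Next, for any smooth deformation $\{g(t)\}$ of $g$ in $\mathscr{M}$, I apply Theorem~\ref{theo1.1} with $\bar g(0) = g$ and $f(0) \equiv 1$ to get
\[
g(t) = f(t)\,\phi(t)^{*}\bar{g}(t),\qquad \bar{g}(t) \in \check{\mathfrak{S}},\ \delta_{g}\bar{g}'(0) = 0.
\]
For $t$ small, $\bar{g}(t)$ is csc and $C^{\infty}$-close to $g$, so by Theorem~\ref{theo4.0} (B\"ohm--Wang--Ziller) it is a Yamabe minimizer of $[\bar g(t)]$, and therefore
\[
\mathcal{E}(\bar{g}(t)) = Y(M,[\bar{g}(t)]) \le Y(M) = \mathcal{E}(g).
\]
Hence $t = 0$ is a local maximum of $t \mapsto \mathcal{E}(\bar g(t))$, yielding $D_{g}\mathcal{E}(\bar{g}'(0)) = 0$.

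Finally, differentiating the decomposition at $t = 0$ gives $g'(0) = f'(0)\, g + \mathcal{L}_{X}g + \bar{g}'(0)$, with $X$ the generator of $\phi$. Diffeomorphism invariance of $\mathcal{E}$ kills the Lie-derivative term, and since $R_{g}$ is constant, a direct computation shows that for $h$ tangent to unit volume,
\[
D_{g}\mathcal{E}(h) = -\int_{M}\langle E, h\rangle_{g}\, dv_{g},\qquad E := \mathrm{Ric}_{g} - \tfrac{R_{g}}{n}\, g;
\]
the conformal direction $f'(0)\,g$ pairs pointwise to zero with the trace-free tensor $E$. Combining, $D_{g}\mathcal{E}(g'(0)) = 0$ for every admissible deformation, and testing with $h = E$ forces $\int_{M}|E|^{2}_{g}\, dv_{g} = 0$, so $\mathrm{Ric}_{g} = \frac{R_{g}}{n}\, g$ and $g$ is Einstein. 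The main obstacle I anticipate is verifying that Theorem~\ref{theo4.0} applies to $\bar g(t)$, i.e.\ that the Koiso curve is $C^{\infty}$-close (not merely ILH-close) to $g$ for small $t$; this should follow from the smoothness of the Koiso decomposition in the ILH sense, but must be checked explicitly before invoking the Yamabe-minimality characterization of nearby csc metrics.
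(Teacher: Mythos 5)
Your argument is correct and is essentially the approach the paper takes: the paper states this closed result as a known consequence of Koiso's decomposition and proves its boundary analogue (Theorem \ref{theo4.3}) by exactly your strategy --- reduce to criticality of $\mathcal{E}$ on the csc slice, then get $\frac{d}{dt}\mathcal{E}(\bar g(t))|_{t=0}=0$ from $\mathcal{E}(\bar g(t)) = Y(M,[\bar g(t)]) \le Y(M) = \mathcal{E}(g)$ via the B\"ohm--Wang--Ziller rigidity. The only difference is that you write out the first-variation formula and the three Koiso components explicitly where the paper simply cites that critical points of $\mathcal{E}|_{\mathfrak{S}}$ are Einstein; your flagged concern about $C^{\infty}$-closeness of $\bar g(t)$ is handled as you expect, since smooth ILH curves converge in every $H^{s}$.
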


We can prove a similar statement below on a manifold with boundary.
Let $M^{n}$ be a compact connected Riemannian manifold of dimension $n \ge 3$ with non-empty smooth boundary $\partial M$.
For each conformal class $C$ on $M,$ we define an invariant $\mathcal{Y}(M ; C)$:
\[
\mathcal{Y}(M ; C) := \sup_{\bar{C} \in \mathcal{C}_{C}} \inf_{\bar{g} \in \bar{C}_{0}} \mathcal{E}(\bar{g})
= \sup_{g \in \mathscr{M}_{C^{1}_{0}}} Y(M, [g]_{0}),
\]
where $\mathcal{C}_{C} := \bigl\{ \bar{C};~\mathrm{conformal~class~on}~M \bigm| \bar{C}||^{1}_{\partial M} = C||^{1}_{\partial M} \bigr\}$.
From the Aubin-type inequality, it holds that $\mathcal{Y}(M ; C) \le \mathcal{Y}(S^{n}_{+}, [g_{\mathrm{{\rm std}}}])$ (see \cite{akutagawa2001notes} for instance).
Then, we can prove the following:
\begin{theo}
\label{theo4.3}
Fix a conformal class $C$ on $M.$
Let $\tilde{g}$ be a unique relative csc metric (up to rescaling) in $[\tilde{g}]_{0}$
with $\tilde{g}||^{1}_{\partial M} \in C_{0}||^{1}_{\partial M}.$
Assume that $\mathcal{Y}(M ; C)$ is attained by $\tilde{g}$ and that $\lambda_{1}(-\Delta_{\tilde{g}} ; \mathrm{Neumann}) > \frac{R_{\tilde{g}}}{n-1}.$
Moreover, assume that $\mathcal{Y}(M ; C) < Y(S^{n}_{+}, [g_{{\rm std}}]_{0}).$
Then, $\tilde{g}$ is a relative Einstein metric.
\end{theo}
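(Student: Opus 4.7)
The plan is to verify that $\tilde{g}$ is a critical point of $\mathcal{E}|_{\mathscr{M}_{C^{1}_{0}}}$, which by Fact~\ref{fact2.1}(1) is equivalent to $\tilde{g}$ being a relative Einstein metric. After rescaling to unit volume, the eigenvalue hypothesis places $\tilde{g} \in \check{\mathfrak{S}}_{C^{1}_{0}}$. Since $Y(M, [\tilde{g}]_{0}) = \mathcal{Y}(M; C) < Y(S^{n}_{+}, [g_{\mathrm{std}}]_{0})$, Escobar's solution of the relative Yamabe problem produces a minimizer in $[\tilde{g}]_{0}$, which is a relative csc and hence, by the uniqueness hypothesis, equals $\tilde{g}$ up to scaling. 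In particular $\mathcal{E}(\tilde{g}) = Y(M, [\tilde{g}]_{0}) = \mathcal{Y}(M; C)$.

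Next I would fix any smooth deformation $\{ g(t) \} \subset \mathscr{M}_{C^{1}_{0}}$ with $g(0) = \tilde{g}$ and apply the Main Theorem to decompose
\[
g(t) = f(t)\, \phi(t)^{*} \bar{g}(t), \qquad f(0) = 1,\ \phi(0) = \mathrm{id}_{M},\ \bar{g}(0) = \tilde{g},
\]
with $f(t) \in C^{\infty}_{+}(M)_{N}$, $\phi(t) \in \mathrm{Diff}_{C_{0}}$, and $\bar{g}(t) \in \check{\mathfrak{S}}_{C^{1}_{0}}$. Diffeomorphism invariance of $\mathcal{E}$ gives $\mathcal{E}(g(t)) = \mathcal{E}(f(t) \bar{g}(t))$, and linearity of the differential yields
\[
\left.\tfrac{d}{dt}\right|_{t=0} \mathcal{E}(g(t)) = D_{\tilde{g}} \mathcal{E}\bigl( f'(0)\, \tilde{g} \bigr) + D_{\tilde{g}} \mathcal{E}\bigl( \bar{g}'(0) \bigr).
\]
The first summand vanishes because $f'(0)\tilde{g} \in T_{\tilde{g}} [\tilde{g}]_{0}$ and the relative csc metric $\tilde{g}$ is a critical point of $\mathcal{E}|_{[\tilde{g}]_{0}}$ (whose Euler--Lagrange equation is $R_{g} = \mathrm{const}$ with $H_{g} = 0$). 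For the second, since $\bar{g}(t) \in \mathscr{M}_{C^{1}_{0}}$ one has $Y(M, [\bar{g}(t)]_{0}) \le \mathcal{Y}(M; C) = \mathcal{E}(\tilde{g})$, with equality at $t = 0$. Granted the claim that $\bar{g}(t)$ is itself the relative Yamabe minimizer of $[\bar{g}(t)]_{0}$ for $|t|$ small, the smooth function $t \mapsto \mathcal{E}(\bar{g}(t)) = Y(M, [\bar{g}(t)]_{0})$ attains a local max at $t = 0$, so $D_{\tilde{g}} \mathcal{E}(\bar{g}'(0)) = 0$. Combining the two vanishings yields $\tilde{g} \in \mathrm{Crit}(\mathcal{E}|_{\mathscr{M}_{C^{1}_{0}}})$, and Fact~\ref{fact2.1}(1) concludes.

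The hard part will be justifying that the relative Yamabe minimizer of the perturbed class $[\bar{g}(t)]_{0}$ agrees with $\bar{g}(t)$ itself. My plan is the following: continuity of $g \mapsto Y(M, [g]_{0})$ in the $C^{2}$-topology together with $\mathcal{Y}(M; C) < Y(S^{n}_{+}, [g_{\mathrm{std}}]_{0})$ gives $Y(M, [\bar{g}(t)]_{0}) < Y(S^{n}_{+}, [g_{\mathrm{std}}]_{0})$ for small $t$, so Escobar yields a minimizer $m(t) = u(t)^{4/(n-2)} \bar{g}(t) \in [\bar{g}(t)]_{0}$ with $u(t) \in C^{\infty}_{+}(M)_{N}$ and $m(t) \in \mathfrak{S}_{C^{1}_{0}}$. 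Running the compactness argument of Theorem~\ref{theo4.1} and invoking the uniqueness of relative csc in $[\tilde{g}]_{0}$ shows $m(t) \to \tilde{g}$ in $C^{\infty}$ as $t \to 0$. Since also $\bar{g}(t) \to \tilde{g}$, Corollary~\ref{theo4.2} applied along any sequence $t_{i} \to 0$ forces $m(t_{i}) = \bar{g}(t_{i})$ eventually, and a standard contradiction argument then yields $m(t) = \bar{g}(t)$ for all $t$ sufficiently close to $0$.
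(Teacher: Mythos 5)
Your proposal is correct and follows essentially the same route as the paper: reduce to criticality of $\mathcal{E}$ via the decomposition supplied by the Main Theorem (the paper packages the conformal direction into the citation of Besse's Proposition 4.47, which is exactly the computation you spell out), and kill the slice direction by observing that $t \mapsto \mathcal{E}(\bar{g}(t)) = Y(M,[\bar{g}(t)]_{0}) \le \mathcal{Y}(M;C)$ attains its maximum at $t=0$. The only remark worth making is that your ``hard part'' is not hard: Theorem \ref{theo4.1} already asserts that every relative csc metric $C^{\infty}$-close to $\tilde{g}$ in $\mathscr{M}_{C^{1}_{0}}$ is a relative Yamabe metric, so applying it to $\bar{g}(t) \in \check{\mathfrak{S}}_{C^{1}_{0}}$ gives $\mathcal{E}(\bar{g}(t)) = Y(M,[\bar{g}(t)]_{0})$ for small $|t|$ directly, with no need to rerun the compactness argument.
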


\begin{proof}
From Main Theorem, 
every critical point of $\mathcal{E}|_{\mathfrak{S}_{C^{1}_{0}}}$ is Einstein (cf. {\cite[Proposition~4.47]{besseeinstein}}).
Hence,
it is enough to prove that $\frac{d}{dt} \mathcal{E}(g(t))|_{t = 0} = 0$
for any smooth deformation $g(t)$ of $\tilde{g}$ in $\mathfrak{S}_{C^{1}_{0}}$.

We now remark that, under the condition $Y(M, [g]_{0}) < Y(S^{n}_{+}, [g_{{\rm std}}]_{0}),$
there exists always a relative Yamabe metric in $[g]_{0}$ (\cite{cherrier1984problemes}).
Hence, by Theorem \ref{theo4.1}, $Y(M, [g(t)]_{0}) = \mathcal{E}(g(t))$
for sufficiently small $|t| << 1.$
On the other hand, since $Y(M, [g(t)]) \le \mathcal{Y}(M ; C)$ and
$Y(M,[g(0)]_{0}) = \mathcal{Y}(M ; C)$, we have
\[
0 = \frac{d}{dt} Y(M,[g(t)])|_{t = 0} = \frac{d}{dt} \mathcal{E}(g(t))|_{t = 0}.
\]
Therefore, $\tilde{g}$ is a relative Einstein.
\end{proof}

\begin{rema}
From the characterization Theorem \ref{theo4.3} for relative Einstein metrics,
we would like to suggest that the {\it relative Yamabe invariant}

\noindent
$Y(M, \partial M, C|_{\partial M})$ defined in {\cite[Section~1]{akutagawarelative}}:
\[
Y(M, \partial M, C|_{\partial M}) := \sup_{g \in \mathscr{M}_{C_{0}|_{\partial}}} Y(M, [g]_{0})
\]
should be replaced by the above $\mathcal{Y}(M ; C) = \sup_{g \in \mathscr{M}_{C^{1}_{0}}} Y(M, [g]_{0}).$
\end{rema}

\bibliographystyle{spmpsci}
\bibliography{decomp}

\textit{E-mail adress}:~a19.fg4w@g.chuo-u.ac.jp

\textsc{Department Of Mathematics, Chuo University, Tokyo 112-8551, Japan}

\end{document}